\theoremstyle{plain}
\newtheorem{thm}{Theorem}[section]
\newtheorem{lemm}[thm]{Lemma}
\newtheorem{cor}[thm]{Corollary}
\theoremstyle{definition}
\newtheorem{df}[thm]{Definition}
\newtheorem{rem}[thm]{Remark}
\newcommand{\dB}{\dot{B}}
\newcommand{\supp}{\operatorname{supp}}
\renewcommand{\leq}{\leqslant}
\renewcommand{\geq}{\geqslant}
\newcommand{\pnabla}{\widetilde{\nabla}}
\newcommand{\phih}{\phi^{\rm h}}
\newcommand{\phiv}{\phi^{\rm v}}
\newcommand{\Deltah}{\Delta_{\rm h}}
\newcommand{\nablah}{\nabla_{\rm h}}
\newcommand{\Ph}{\mathbb{P}_{\rm h}}
\newcommand{\xh}{x_{\rm h}}
\newcommand{\xih}{\xi_{\rm h}}
\newcommand{\Deltahh}{\Delta^{\rm h}}
\newcommand{\Deltav}{\Delta^{\rm v}}
\newcommand{\R}{\mathbb{R}}
\newcommand{\Grad}{\nabla}
\newcommand{\f}{\frac}
\newcommand{\n}[1]{{\left\|#1\right\|}}
\newcommand{\lp}[1]{\left[#1\right]}
\newcommand{\Mp}[1]{\left\{#1\right\}}
\renewcommand{\sp}[1]{\left(#1\right)}
\newcommand{\alphah}{\alpha_{\rm h}}
\newcommand{\vh}{v_{\rm h}}
\newcommand{\ep}{\varepsilon}
\newcommand{\p}{\partial}
\newcommand{\Z}{\mathbb{Z}}
\begin{document}
\title[$3$D anisotropic Boussinesq flow with stratification]
{Linear and nonlinear stability \\ for the $3$D stratified Boussinesq equations \\ with the horizontal viscosity and diffusivity}

\author[M.Fujii]{Mikihiro Fujii}
\author[Y.Li]{Yang Li}
\address[M.Fujii]{Graduate School of Science, Nagoya City University, Nagoya, 467-8501, Japan}
\email[M.Fujii]{fujii.mikihiro@nsc.nagoya-cu.ac.jp}
\address[Y.Li]{School of Mathematical Sciences and Center of Pure Mathematics, Anhui University, Hefei, 230601, People's Republic of China}
\email[Y.Li]{lynjum@163.com}
\keywords{Boussinesq equations, stratification, dispersion, enhanced dissipation} 
\subjclass[2020]{35B40, 76D50} 
\begin{abstract}
In this manuscript, we consider the $3$D Boussinesq equations for stably stratified fluids with the horizontal viscosity and thermal diffusivity and investigate the large time behavior of the solutions.
Making use of the anisotropic Littlewood--Paley theory, we obtain their precise $L^1$-$L^p$ decay estimates, which provide us information on both the anisotropic and dispersive structure of the system. 
More precisely, we reveal that the dispersion from the skew symmetric terms of stratification makes the decay rates of some portions of the solutions faster and furthermore the third component of the velocity field exhibit the enhanced dissipative effect, which provides the additional fast decay rate.  
\end{abstract}
\maketitle

\tableofcontents

\section{Introduction}
Let us consider the initial value problem for the Navier--Stokes equations taking into account temperature variations under the Boussinesq approximation. 
In many applications like the rotating stably stratified fluids in Ekman layers, it is reasonable to model the fluid motions by anisotropic equations. 
By neglecting the effect of vertical dissipation, the fluid equations concerned here only admit the horizontal dissipation and read as 
\begin{align}\label{eq:B1}
    \begin{cases}
        \partial_t v - \nu \Deltah v + (v \cdot \nabla)v + \nabla \Pi = \Theta e_3, \qquad & t>0, x \in \mathbb{R}^3,\\
        \partial_t \Theta - \kappa \Deltah \Theta + (v \cdot \nabla) \Theta = 0, & t >0,x \in \mathbb{R}^3,\\
        \nabla \cdot v = 0, & t \geq 0,  x \in \mathbb{R}^3,\\
        \Theta(0,x) = \Theta_0(x),\quad v(0,x) = v_0(x), & x \in \mathbb{R}^3,
    \end{cases}
\end{align}
where $v=v(t,x) : [0,\infty) \times \mathbb{R}^3 \to \mathbb{R}^3$, $\Pi=\Pi(t,x) : (0,\infty) \times \mathbb{R}^3 \to \mathbb{R}$, and $\Theta=\Theta(t,x) : [0,\infty) \times \mathbb{R}^3 \to \mathbb{R}$ represent the unknown velocity field, pressure, and temperature of the fluid, respectively,
whereas given functions $v_0=v_0(x) :  \mathbb{R}^3 \to \mathbb{R}^3$ and $\Theta_0=\Theta_0(x) :  \mathbb{R}^3 \to \mathbb{R}$ are initial data. 
We denote by $e_3=(0,0,1)^{T}$ the vertical unit vector. 
Two positive constants $\nu$ and $\kappa$ represent the kinematic viscosity coefficient and heat conductivity coefficient, respectively. To simplify the presentation, we assume that $\nu=\kappa=1$. The operator $\Deltah=\partial_{x_1}^2 + \partial_{x_2}^2$ is the Laplacian only for the horizontal variables. 
It is well-known that \eqref{eq:B1} possesses the following explicit stationary solutions 
\begin{align}
    \widetilde{v}(x)
    =0,\qquad
    \widetilde{\Theta}(x)
    =
    x_3,\qquad
    \widetilde{\Pi}(x)
    :=
    \frac{1}{2}x_3^2.
\end{align}
We consider the perturbation 
\begin{align}
    \theta:= {\Theta - \widetilde{\Theta}},\qquad
    p := \Pi - \widetilde{\Pi}
\end{align}
of $\Theta$ and $\Pi$
from $\widetilde{\Theta}$ and $\widetilde{\Pi}$, respectively.
Then, the velocity $v$ and the thermal disturbance $\theta$ should solve the following equations
\begin{align}\label{eq:B2}
    \begin{cases}
        \partial_t v -  \Deltah v -  \theta e_3 
        + (v\cdot \nabla) v + \nabla p = 0, \qquad & t > 0, x \in \mathbb{R}^3,\\
        \partial_t \theta -  \Deltah \theta +  v_3 
        + (v\cdot \nabla) \theta = 0, & t > 0, x \in \mathbb{R}^3,\\
        \nabla \cdot  v = 0, & t \geq 0, x \in \mathbb{R}^3,\\
        v(0,x)=v_0(x),\quad \theta(0,x)=\theta_0(x), & x \in \mathbb{R}^3,
    \end{cases}
\end{align} 
where $\theta_0:=\Theta_0-\widetilde{\Theta}$.

The aim of this paper is to derive the precise $L^1$-$L^p$ decay estimates of the solutions for $2 \leq p \leq \infty$, which provide us information on the anisotropic and dispersive structure of the system. More precisely, we prove that the skew symmetric terms $-\theta e_3$ and $v_3$ provide the dispersive phenomenon that affect the decay rate of some portions of the solutions more faster than the $2$D heat kernel. 
Furthermore, the third component of the velocity field $v_3$ exhibits the enhanced dissipative effect, which was confirmed for the anisotropic Navier--Stokes equations \eqref{eq:ANS} below by \cites{F21,XZ22}.

Before focusing on our main results precisely, we recall some known results related to our study.
For the case of $\Theta \equiv 0$, the system \eqref{eq:B1} is reduced to the anisotropic Navier--Stokes equations:
\begin{align}\label{eq:ANS}
    \begin{cases}
        \partial_t v -  \Deltah v 
        + (v\cdot \nabla) v + \nabla p = 0, \qquad & t > 0, x \in \mathbb{R}^3,\\
        \nabla \cdot  v = 0, & t \geq 0, x \in \mathbb{R}^3,\\
        v(0,x)=v_0(x), & x \in \mathbb{R}^3.
    \end{cases}
\end{align}  
It was Chemin, Desjardins, Gallagher, and Grenier \cite{CDGG01} who first constructed the local and global solutions to \eqref{eq:ANS} in anisotropic scaling sub-critical Sobolev spaces, and later their results were improved to the scaling critical anisotropic Besov spaces framework; see \cites{CZ07,Pai05,ZF1,Li-Pai-Zha-20}. 
For the asymptotic behavior of the global solutions,  
Ji, Wu, and Yang \cite{JWY21} proved that the $H^4(\mathbb{R}^3)$ solutions behave as $2$D heat kernel in the sense of $L^2$-decay rate.
Xu and Zhang \cite{XZ22} and the first author \cite{F21} revealed that the horizontal component of the velocity field decays as $2$D heat kernel, while the vertical component behaves as $3$D heat kernel.
More precisely, \cite{F21} proved that the small solutions $v=(v_1,v_2,v_3) = (\vh,v_3)$\footnote{Throughout this paper, we use the notation $a_{\rm h}=(a_1,a_2)$ for a $3$D vector $a=(a_1,a_2,a_3) \in \mathbb{R}^3$ and call $a_{\rm h}$ as the horizontal component of $a$.} in $H^8(\mathbb{R}^3) \cap L^1(\mathbb{R}^2_{\xh};(W^{1,1}\cap W^{1,\infty})(\mathbb{R}_{x_3}))$ decays as 
\begin{align} 
    \n{\nabla^{\alpha} \vh(t)}_{L^p(\R^3)}
    & = O\sp{t^{-(1-\frac{1}{p})-\frac{|\alphah|}{2}}}, \nonumber 
    \\       
    \n{\nablah^{\alphah} v_3(t)}_{L^p(\R^3)} &
    = O\sp{t^{-\frac{3}{2}(1-\frac{1}{p})-\frac{|\alphah|}{2}}} \label{ANS-p} 
\end{align}
for $1\leq p \leq \infty$ and $\alpha = (\alphah,\alpha_3) \in (\mathbb{N} \cup \{0 \})^2 \times (\mathbb{N} \cup \{0 \})$ with $|\alpha|\leq 1$ and also showed that the above decay rates are sharp by establishing the precise asymptotic profile. See \cite{L22} for the related study on the anisotropic MHD system and \cite{FL-24} for the corresponding results to \cite{F21} on the half space case.

Next, we focus on the decay estimates for the $3$D anisotropic Boussinesq equations \eqref{eq:B2}. 
Wu and Zhang \cite{Wu-Zha-21} proved the stability and decay estimates of solutions to $3$D anisotropic Boussinesq equations with horizontal dissipation and vertical thermal diffusion in $\R^2 \times \mathbb{T}$. More precisely, under the smallness and certain symmetry conditions of initial data, they showed that global solution converges to the limiting system exponentially. 
Under the smallness assumption 
\begin{align}
\n{(v_0,\theta_0)}_{H^4(\R^3)}
+{}
\n{ |\nabla_{\rm h}|^{-\sigma}(v_0, \theta_0 )   }_{L^2(\R^3)}
+
\n{ \p_{x_3}|\nabla_{\rm h}|^{-\sigma}(v_0, \theta_0 )   }_{L^2(\R^3)}
\ll 1
\end{align} 
for $3/4\leq \sigma<1$, Ji, Yan, and Wu \cite{Ji-Yan-Wu-22} proved that the global solution $(v,\theta)$ to \eqref{eq:B2} satisfies
\begin{align}\label{decay:JWY}
    \n{ \nabla^{\alpha}(v,\theta) (t) }_{L^2(\R^3)}
    =
    O \sp{t^{-\f{\sigma}{2}-\frac{|\alphah|}{2}}}   
\end{align} 
as $t \to \infty$
for $\alpha \in (\mathbb{N} \cup \{ 0 \})^3$ with $|\alpha|\leq 1$.
This result implies that the solutions to \eqref{eq:B2} behave like the $2$D heat kernel in the sense of $L^2$-decay rate.
We also refer to \cites{Ben-Pan-Wu-22,Cha-06,Den-Wu-Zha-21,Don-Wu-Xu-Zhu-21,Hou-Li-05,Kan-Lee-Ngu-24,Lai-Wu-Zho-21,Lar-Lun-Titi-13,Li-Titi-16,Tao-Wu-Zha0-Zhen-20,Zil-21} for more results on the global existence of a unique solution, stability and the decay estimates for $2$D Boussinesq equations with partial dissipation and thermal diffusion.

The goal of this paper is to improve the decay estimates \eqref{decay:JWY} by \cite{Ji-Yan-Wu-22} and reveal in detail how the dispersion and anisotropy of the system \eqref{eq:B2} affect the large time behavior of its solutions. 
Let us mention the novelty of the present article more precisely.
Due to the skew symmetric terms $-\theta e_3$ and $+ v_3$ in the first and second equation of \eqref{eq:B2}, respectively, the linearized solution to \eqref{eq:B2} presents the dispersive effect; indeed the linear solution formula \eqref{lin-sol-1} contains the dispersive semigroup $\{e^{t\Deltah}e^{\pm i t \frac{|\nablah|}{|\nabla|}}\}_{t\geq 0}$.
Using the precise analysis by the anisotropic Littlewood--Paley theory, we take this dispersion into account of our linear and nonlinear analysis and show that the $L^p$-decay estimates of the horizontal curl free part $\vh - \Ph \vh$\footnote{We denote by $\Ph$ the horizontal Helmholtz projection; see Sections \ref{main-res} and \ref{lin-ana} for the definition.} and $\theta$ are faster than the $2$D heat kernel since their decay consist of both the $2$D heat decay $O(t^{-(1-\frac{1}{p})})$ and dispersive decay $O(t^{-\frac{3}{4}(1-\frac{2}{p})})$.
Furthermore, the vertical component $v_3$ of the velocity field has not only $2$D heat decay and the dispersive decay but also the additional decay $O(t^{-\frac{1}{4}})$, which comes from the enhanced dissipation mechanism that is investigated by \cites{F21,FL-24,XZ22} for \eqref{eq:ANS}.

The rest of this paper is arranged as follows. 
In Section \ref{main-res}, we present the main results mentioned above by separating linear estimates and nonlinear estimates. 
In Section \ref{sec:pre}, we prepare some useful tools on the anisotropic Littlewood--Paley analysis. 
In Section \ref{lin-ana}, we give the decay estimates for the linearized equations, which plays the key role in the whole analysis. 
Section \ref{sec:non} is devoted to the decomposition of nonlinear term, the decay estimates of Duhamel terms and the proof of Theorem \ref{main-thm}.

\section{Main results}\label{main-res}
In this section, we provide the precise statements of our two main theorems.
The first main theorem mentions the $L^1$-$L^p$ decay estimates for the solutions to the linearized system of \eqref{eq:B-lin-0} below.
In the second theorem, we focus on the decay results for the small nonlinear solutions to \eqref{eq:B2}.

In order to state our main results, we introduce some notations. 
Throughout this paper, we denote by $C$ the constant, which may differ in each line. In particular, $C=C(*,...,*)$ means that $C$ depends only on the qualities in parentheses.
For two non-negative numbers $A$, $B$, the relation $A \sim B$ means that there exists a positive constant $C$ such that $C^{-1}A\leqslant B \leqslant C A$ holds.
For a given $3$D vector field $a=(a_1,a_2,a_3) \in \mathbb{R}^3$, we call $a_{\rm h}:=(a_1,a_2)$ as the horizontal component of $a$.
For $1 \leq p_1,p_2 \leq \infty$ and $s \in \mathbb{N}$, we use the following abbreviation:
\begin{align}
    \n{f}_{L^{p_1}_{\xh}L^{p_2}_{x_3}(\R^3)}
    :={}&
    \n{f}_{L^{p_1}(\mathbb{R}^2_{\xh};L^{p_2}(\mathbb{R}_{x_3}))},\\
    \n{f}_{L^{p_1}_{\xh}W^{s,p_2}_{x_3}(\R^3)}
    :={}&
    \n{f}_{L^{p_1}(\mathbb{R}^2_{\xh};W^{s,p_2}(\mathbb{R}_{x_3}))}.
\end{align}
For $s_1,s_2\in \mathbb{N}$, we set 
\begin{align}
    X^{s_1,s_2}(\R^3):= 
    H^{s_1}(\R^3) \cap L^1(\mathbb{R}^2_{\xh}; W^{s_2,1}  (\R_{x_3}))
\end{align} 
with the associated norms 
\begin{align}
    \n{f}_{  X^{s_1,s_2}(\R^3)   } :={}&
    \n{f}_{  H^{s_1}(\R^3)    }
    +
    \n{f}_{L^1_{\xh}W^{s_2,1}_{x_3}(\R^3)}\\
    ={}&
    \sum_{|\alpha|\leq s_1}  
    \n{\nabla^{\alpha}f}_{L^2(\mathbb{R}^3)}
    + \sum_{j=0}^{s_2}  \n{\partial_{x_3}^jf}_{L^1(\R^3)}.
\end{align} 
\subsection{Linear stability}
Let us consider the solutions to the following linear system:
\begin{align}\label{eq:B-lin-0}
    \begin{cases}
        \partial_t v^{\rm lin} -  \Deltah v^{\rm lin} -  \theta^{\rm lin} e_3 
        +  \nabla p^{\rm lin} = 0, \qquad & t > 0, x \in \mathbb{R}^3,\\
        \partial_t \theta^{\rm lin} -  \Deltah \theta^{\rm lin} +  v^{\rm lin}_3 
         = 0, & t > 0, x \in \mathbb{R}^3,\\
        \nabla \cdot  v^{\rm lin} = 0, & t \geq 0, x \in \mathbb{R}^3,\\
        v^{\rm lin}(0,x)=v_0(x),\quad \theta^{\rm lin}(0,x)=\theta_0(x), & x \in \mathbb{R}^3.
    \end{cases}
\end{align} 
Our first goal is to obtain the $L^p$-decay estimates of solutions to \eqref{eq:B-lin-0}. 
\begin{thm}\label{thm:lin}
Let $0<\ep <1/4$.
Then, there exist an absolute positive constant $C$ and a positive constant $C_{\ep}$ depending only on $\ep$ such that the linear solution $(v^{\rm lin},\theta^{\rm lin})$ to \eqref{eq:B-lin-0}, with the initial data $(v_0,\theta_0) \in X^{3,4}(\R^3)$ with $\nabla \cdot v_0 = 0$, satisfies
\begin{align}
    &
    \n{ \Grad^{\alpha} \Ph v^{\rm lin}_{\rm h} (t)  }_{L^p(\R^3)}  
    \leq 
    C
    (1+t)^{ -(1-\frac{1}{p}) - \f{|\alphah|}{2} }
     \n{(v_0,\theta_0)}_{ 
     {  
     X^{2+|\alphah|,2+\alpha_3}(\R^3) }
    }  ,  
    \\
    & 
    \n{ \Grad^{\alpha} ( v^{\rm lin}_{\rm h}-\Ph \vh^{\rm lin}) (t)  }_{L^p(\R^3)}  
    \leq 
    C
    (1+t)^{ -(1-\frac{1}{p}) - \f{|\alphah|}{2} -\f{3}{4} (1-\f{2}{p} )  }
    { \n{(v_0,\theta_0)}_{ 
    X^{2+|\alphah|,3+\alpha_3}(\R^3)
    }  },  
    \\
    & 
    \n{  \nablah^{\alphah}  v_{3}^{\rm lin}(t)}_{L^p(\R^3)}
    \leq 
    C_{\varepsilon}
    (1+t)^{
    -(1-\frac{1}{p}) -\frac{|\alphah|}{2}-\frac{1}{4} - (\frac{3}{4}-\varepsilon)(1-\frac{2}{p}) }
    \n{(v_0,\theta_0)}_{ X^{3,3}(\R^3)  },\\
    &
    \n{ \Grad^{\alpha}   \theta^{\rm lin}(t)}_{L^p(\R^3)}
    \leq 
    C
    (1+t)^{ -(1-\frac{1}{p}) - \f{|\alphah|}{2} -\f{3}{4} (1-\f{2}{p} )  }
    {
    \n{(v_0,\theta_0)}_{ X^{2+|\alphah|,3+\alpha_3}(\R^3)   }  }
\end{align} 
for all $\alpha=(\alphah,\alpha_3) \in (\mathbb{N} \cup \{ 0\} )^2 \times (\mathbb{N} \cup \{ 0\} )$ with $|\alpha| \leq 1$, $2\leq p \leq\infty$, and $t>0$. 
Here, $\Ph = \{ \delta_{j,k} + \partial_{x_j}\partial_{x_k}(-\Deltah)^{-1}\}_{1 \leq j,k \leq 2}$ is the horizontal Helmholtz projection.
\end{thm}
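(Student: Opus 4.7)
The plan is to reduce \eqref{eq:B-lin-0} to an explicit Fourier representation, and then estimate each piece of the solution block-by-block using the anisotropic Littlewood--Paley machinery prepared in Section \ref{sec:pre}. Fourier-transforming the momentum equation, contracting with $\xi$ and using $i\xi\cdot\widehat{v}^{\rm lin}=0$, I would solve for the pressure as $\widehat{p}^{\rm lin} = -i\xi_3|\xi|^{-2}\widehat{\theta}^{\rm lin}$. Substituting back shows that the horizontally divergence-free part $\Ph v_h^{\rm lin}$ decouples and obeys the purely parabolic equation $\partial_t \Ph v_h^{\rm lin} = \Deltah \Ph v_h^{\rm lin}$, whereas $(\widehat{v_3^{\rm lin}},\widehat{\theta^{\rm lin}})$ satisfies a $2\times 2$ system whose symbol decomposes as $-|\xih|^2 I + N(\xi)$ with $N(\xi)^2 = -(|\xih|/|\xi|)^2 I$. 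Diagonalising yields the explicit evolution
\begin{equation}
\begin{pmatrix}\widehat{v_3^{\rm lin}}(t) \\ \widehat{\theta^{\rm lin}}(t)\end{pmatrix} = e^{-t|\xih|^2}\begin{pmatrix}\cos(\sigma t) & \tfrac{|\xih|}{|\xi|}\sin(\sigma t) \\ -\tfrac{|\xi|}{|\xih|}\sin(\sigma t) & \cos(\sigma t)\end{pmatrix}\begin{pmatrix}\widehat{v_3}(0) \\ \widehat{\theta}(0)\end{pmatrix},\qquad \sigma:=\frac{|\xih|}{|\xi|},
\end{equation}
and the horizontally curl-free part is recovered from $v_3$ by the incompressibility relation $\widehat{(v_h^{\rm lin}-\Ph v_h^{\rm lin})} = -\xi_3\xih |\xih|^{-2}\widehat{v_3^{\rm lin}}$.

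The first estimate is then immediate: $\Ph v_h^{\rm lin}$ evolves by the pure horizontal heat semigroup acting fibrewise in $x_3$, so a Young convolution inequality combined with the anisotropic Littlewood--Paley decomposition, and the natural split of the $X^{s_1,s_2}$ norm into a low-frequency $L^1_{\xh}L^1_{x_3}$ part and a high-frequency $L^2$ part, yields the $(1+t)^{-(1-1/p)-|\alphah|/2}$ decay.

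The remaining estimates for $v_h-\Ph v_h$, $\theta$, and $v_3$ all pass through the dispersive semigroup $e^{t\Deltah\pm it|\nablah|/|\nabla|}$. On a dyadic block $\Delta_k^h \Delta_j^v$ the heat factor contributes $e^{-ct 2^{2k}}$, while a stationary-phase analysis of the kernel $\int e^{ix\cdot\xi\pm it|\xih|/|\xi|}\phi(\xi)\,d\xi$, whose phase $|\xih|/|\xi|$ has a two-dimensional non-degenerate critical set on each block, produces the dispersive gain $t^{-3/4}$ for the $L^2\to L^\infty$ bound; interpolating against the trivial $L^2$ bound delivers the stated factor $t^{-\frac{3}{4}(1-\frac{2}{p})}$. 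Summation of the blocks against the appropriate pieces of $\n{(v_0,\theta_0)}_{X^{s_1,s_2}}$, after separating low and high frequencies in both the horizontal and vertical directions, assembles the $v_h-\Ph v_h$ and $\theta$ estimates.

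The hardest step will be producing the additional $t^{-1/4}$ factor (with its $\varepsilon$-loss) in the $v_3$ estimate. The algebraic input is that the entry of the $2\times 2$ semigroup mapping $\widehat{\theta}(0)$ to $\widehat{v_3^{\rm lin}}$ carries the extra symbol $|\xih|/|\xi|$, and the contribution of $\widehat{v_3}(0)$ can be traded for $\widehat{v_h}(0)$ through $\xi_3 \widehat{v_3}(0) = -\xih\cdot\widehat{v_h}(0)$, gaining a further $|\xih|/|\xi_3|$. These symbol gains are effective in precisely the obstructive frequency region $|\xi_3|\gtrsim|\xih|$ where the bare dispersive estimate is weakest. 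Converting these symbol factors into an $L^p$-decay requires a careful block-wise analysis combining the heat and dispersive factors with an anisotropic Bernstein inequality at the dyadic threshold $2^{2k}\sim t^{-1}$, and the $\varepsilon$ absorbs the logarithmic cost of summing at this borderline threshold. Verifying this block estimate and its compatibility with the right-hand side $\n{(v_0,\theta_0)}_{X^{3,3}}$ is what I expect to be the main technical obstacle.
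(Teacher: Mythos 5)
Your plan coincides, up to presentation, with the paper's own proof. The explicit formula you obtain by eliminating the pressure and diagonalising the $2\times 2$ system for $(\widehat{v_3},\widehat{\theta})$ (recovering $v_{\rm h}-\Ph v_{\rm h}$ from $v_3$ through incompressibility) is exactly the solution formula \eqref{lin-sol-1}, which the paper instead imports from the eigenprojection decomposition of Lee--Takada; the treatment of $\Ph v_{\rm h}^{\rm lin}$ by the horizontal heat kernel, the block-wise dispersive estimate interpolated against the $L^2$ bound, and the mechanism for the extra $t^{-1/4}$ on $v_3$ (the output symbol carries an extra $|\xih|/|\xi|$, and $\widehat{v_3}(0)$ is traded for $\widehat{v_{0,{\rm h}}}$ via $\xi_3\widehat{v_3}(0)=-\xih\cdot\widehat{v_{0,{\rm h}}}$, which is the paper's identity $\partial_{x_3}Q_{\pm}^{\rm vel,v}=|\nablah|\widetilde{Q_{\pm}^{\rm vel,v}}$) are all as in Section \ref{lin-ana}.

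The one place where your sketch, as written, would not go through is the stationary-phase step. The phase $|\xih|/|\xi|$ has no critical points on the dyadic blocks (its critical set is $\{\xi_3=0\}$, disjoint from the support of the cut-off), and the decay is not produced by a two-dimensional critical manifold --- that would give $|t|^{-1}$ per block, and in any case $t^{-3/4}$ is not of the form $t^{-k/2}$. What the paper actually uses (Lemma \ref{dis-est-pm}) is that the full $3\times 3$ Hessian of the rescaled phase $p_m(\xi)=|\xih|(|\xih|^2+2^{2m}\xi_3^2)^{-1/2}$ is non-degenerate, giving $|t|^{-3/2}$ per block with time rescaled anisotropically by $2^{-(k-j)}$ or $2^{2(k-j)}$; the factor $t^{-\frac34(1-\frac2p)}$ in Corollary \ref{cor:disp} only emerges after combining this with the horizontal heat factor and paying $5/2$ vertical derivatives. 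Moreover, since $\det\nabla^2 p_m\sim 2^{6m}$ degenerates as $m\to-\infty$, the stationary-phase constant is not automatically uniform in the block aspect ratio; the paper needs the perturbative stability Lemma \ref{sta-pha-sta}, comparing $2^{-2m}(1-p_m)$ and $2^m p_m$ with the $m$-independent limiting phases $\tfrac12\xi_3^2/|\xih|^2$ and $|\xih|/|\xi_3|$. Without this uniformity your block-wise bound, and hence everything downstream, is incomplete. A smaller bookkeeping point: the $\varepsilon$-loss in the $v_3$ estimate comes from the vertical frequency sum $\sum_k 2^{(1-2\vartheta)k}$ becoming non-summable at the endpoint $\vartheta=\tfrac12$ (handled by interpolating between $\dot{\mathcal{B}}^{0,0}_{1,\infty}$ and $\dot{\mathcal{B}}^{0,1}_{1,\infty}$), not from the horizontal heat threshold you indicate, though this is the kind of detail the computation would correct.
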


\begin{rem}\label{rem:thm:lin}
We mention some remarks on Theorem \ref{thm:lin}.
\begin{enumerate}
    \item 
    The proof of Theorem \ref{thm:lin} relies on the linear solution formula \eqref{lin-sol-1}.
    Thus, the key of the linear analysis is to establish the dispersive $L^p$ decay estimate of the semigroup $\{e^{t\Deltah}e^{\pm i t \frac{|\nablah|}{|\nabla|}}\}_{t\geq 0}$:
    \begin{align}
        \n{e^{t\Deltah}e^{\pm i t \frac{|\nablah|}{|\nabla|}}f}_{L^p(\R^3)}
        \leq
        C
        t^{-(1-\frac{1}{p})}
        t^{-\frac{3}{4}(1-\frac{2}{p})}
        \n{f}_{L^1_{\xh}W^{3,1}_{x_3}(\R^3)}. 
    \end{align}
    See Corollary \ref{cor:disp} below for the detail.
    Here, the additional decay rate $O(t^{-\frac{3}{4}(1-\frac{2}{p})})$ comes from the dispersive estimate of the oscillatory integral related to $\{e^{\pm i t \frac{|\nablah|}{|\nabla|}}\}_{t \in \mathbb{R}}$.
    We should note that the anisotropic Littlewood--Paley analysis and the additional vertical derivative $W^{3,1}_{x_3}$ for $f$ make the above dispersive decay rate faster than the {\it isotropic} dispersive decay rate $O(t^{-\frac{1}{2}(1-\frac{2}{p})})$, that is
    \begin{align}
        \n{e^{t\Delta}e^{\pm it \frac{|\nablah|}{|\nabla|}}f}_{L^p(\R^3)}
        \leq 
        Ct^{-\frac{3}{2}(1-\frac{1}{p})}t^{-\frac{1}{2}(1-\frac{2}{p})}
        \n{f}_{L^1(\R^3)},
    \end{align}
     where $\Delta=\partial_{x_1}^2+\partial_{x_2}^2+\partial_{x_3}^2$ is the $3$D Laplacian.
    This estimate is proved by using \cite{Lee-Tak-17}*{Lemmas 4.2 and 4.3}.
    \item 
    From the divergence free condition, it holds $\partial_{x_3} v_3^{\rm lin} = - \nablah \cdot \vh^{\rm lin}$, which enables us to obtain $t^{-\frac{1}{4}}$-faster decay rate for $v_3^{\rm lin}$ in $L^p$ with all $2 \leq p \leq \infty$.
    The similar phenomenon has already been known in \cites{F21,FL-24,XZ22,L22}; see also \eqref{ANS-p}. 
    However, our enhanced decay rate is weaker than these known results for $p>2$ because of the complicated structure of linear solutions by the dispersion.
    \item 
    Theorem \ref{thm:lin} does not contain the decay information of $\partial_{x_3}v_3^{\rm lin}(t)$, but it is easily obtained. 
    Indeed, from the divergence free condition, it holds 
    \begin{align}\label{p3v3}
        \partial_{x_3}v_3^{\rm lin}(t)
        =
        -\nablah \cdot \vh^{\rm lin}(t)
        =
        -\nablah \cdot \sp{\vh^{\rm lin}(t) - \Ph \vh^{\rm lin}(t)},
    \end{align}
    which and the second estimate of Theorem \ref{thm:lin} imply 
    \begin{align}
        \n{\partial_{x_3}v_3^{\rm lin}(t)}_{L^p(\R^3)}
        ={}&
        \n{\nablah \cdot \sp{\vh^{\rm lin}(t) - \Ph \vh^{\rm lin}(t)}}_{L^p(\R^3)}
        \\
        \leq{}&
        C
        (1+t)^{  
        -(1-\frac{1}{p})- \frac{1}{2}-\f{3}{4}(1-\f{2}{p}) }
        \n{(v_0,\theta_0)}_{ X^{3,3}(\R^3) }.
    \end{align}
\end{enumerate}
\end{rem}

\subsection{Nonlinear stability}
Next, we provide the $L^1$-$L^p$ decay estimates for the nonlinear solutions $(v,\theta)$ to \eqref{eq:B2}.
To state our main result for the nonlinear solutions precisely, we recall the small global well-posedness of \eqref{eq:B2} in Sobolev spaces.  
\begin{lemm}[\cite{Ji-Yan-Wu-22}*{Proposition 1.2}]\label{lemm:GWP-Sob}
Let $m \in \mathbb{N}$ satisfy $m \geq 2$.
Then, there exists a positive constant $\delta=\delta(m)$ such that if a given initial datum $(v_0,\theta_0)\in H^m(\R^3)$ with $\Grad \cdot v_0=0$ satisfies $\n{  (v_0,\theta_0) }_{H^m(\R^3)} \leq \delta$, then \eqref{eq:B2} admits a unique global solution $(v,\theta)\in C([0,\infty);H^m(\R^3))$. Moreover, there exists a positive constant $C=C(m)$ such that
\begin{align}
\n{  (v,\theta)(t) }_{H^m(\R^3)}^{2}
+\int_0^t   
\n{  \nablah (v,\theta)(\tau) }_{H^m(\R^3)}^{2}
d\tau
\leq  C \n{  (v_0,\theta_0) }_{H^m(\R^3)}^2
\end{align}
for all $t \geq 0$.
\end{lemm}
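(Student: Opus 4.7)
The plan is a standard local-existence-plus-continuation argument in which the entire content sits in one a priori $H^m$ energy estimate that closes under the smallness assumption. The main obstacle is that only horizontal dissipation is available, so every vertical derivative hitting the nonlinearity has to be absorbed using the divergence-free structure of the system.

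First I would build a local solution in $C([0,T];H^m(\R^3))$ by a mollified Picard/Friedrichs iteration; no new difficulty arises because $m\geq 2$ gives $H^m(\R^3)\hookrightarrow L^\infty(\R^3)$ and the horizontal Laplacian supplies enough parabolic smoothing in $\xh$ to pass to the limit. The continuation criterion is the boundedness of $\n{(v,\theta)(t)}_{H^m}$, so the remainder of the argument is devoted to controlling this quantity uniformly in time.

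For the core a priori estimate, pair the momentum and temperature equations with $v$ and $\theta$ in $L^2$; the pressure drops out by $\Grad\cdot v=0$, the transport terms vanish by divergence-free integration by parts, and the skew-symmetric coupling $-\theta e_3$ and $+v_3$ cancels identically, producing
\begin{align}
\f{1}{2}\f{d}{dt}\n{(v,\theta)}_{L^2}^2 + \n{\nablah(v,\theta)}_{L^2}^2 = 0.
\end{align}
Applying $\Grad^\alpha$ for $1\leq |\alpha|\leq m$ and pairing with $\Grad^\alpha(v,\theta)$, the same linear cancellations persist, so the whole estimate reduces to controlling the transport commutators
\begin{align}
\sum_{1\leq |\alpha| \leq m} \left|\int_{\R^3} \left(\Grad^\alpha((v\cdot\nabla)v)-(v\cdot\nabla)\Grad^\alpha v\right) \cdot \Grad^\alpha v \, dx\right|
\end{align}
together with its temperature analogue, using only $\n{\nablah(v,\theta)}_{H^m}$ on the dissipative side.

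The hard part is that commutator terms of the form $v_3\partial_{x_3}(\text{derivative of }v \text{ or }\theta)$ have no $\partial_{x_3}$ dissipation to absorb them. Here I would exploit the divergence-free relation $\partial_{x_3} v_3 = -\nablah\cdot \vh$ through integration by parts in $x_3$, so that each stray vertical derivative is traded for a horizontal derivative of $\vh$, which the dissipation does control. Combined with anisotropic interpolations such as
\begin{align}
\n{f}_{L^\infty_{x_3}L^2_{\xh}} \leq C\n{f}_{L^2}^{1/2}\n{\partial_{x_3}f}_{L^2}^{1/2}
\end{align}
and Ladyzhenskaya-type inequalities in the horizontal variables, the commutator is bounded by $C\n{(v,\theta)}_{H^m}\n{\nablah(v,\theta)}_{H^m}^2$, yielding
\begin{align}
\f{d}{dt}\n{(v,\theta)}_{H^m}^2 + \n{\nablah(v,\theta)}_{H^m}^2 \leq C\n{(v,\theta)}_{H^m}\n{\nablah(v,\theta)}_{H^m}^2.
\end{align}
A bootstrap closes the proof: choose $\delta>0$ with $2C\delta \leq 1/2$, and set $T^*:=\sup\{T>0 : \n{(v,\theta)(t)}_{H^m}\leq 2\delta \ \text{for all } t\in[0,T]\}$. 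On $[0,T^*]$ the nonlinear term is absorbed into half of the good term, so time-integration gives $\n{(v,\theta)(t)}_{H^m}^2 + \f{1}{2}\int_0^t \n{\nablah(v,\theta)(\tau)}_{H^m}^2\, d\tau \leq \n{(v_0,\theta_0)}_{H^m}^2$, which both contradicts any finite maximal time through the continuation criterion and supplies the asserted bound.
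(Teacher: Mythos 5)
The paper does not prove this lemma; it is quoted verbatim as Proposition 1.2 of the cited reference \cite{Ji-Yan-Wu-22}, so there is no internal proof to compare against. Your sketch reproduces the standard argument that the cited work uses --- the $H^m$ energy identity with cancellation of the pressure, transport, and skew-symmetric coupling terms, closure of the remaining commutators via $\partial_{x_3}v_3=-\nablah\cdot\vh$ together with anisotropic trilinear interpolation inequalities, and a smallness bootstrap --- and the outline is sound. The only real work left unverified is the case-by-case bookkeeping showing that every commutator term, in particular the purely vertical one $\int_{\R^3}\partial_{x_3}v_3\,|\partial_{x_3}^m v|^2\,dx$ arising from $\alpha=(0,0,m)$, is indeed dominated by $C\n{(v,\theta)}_{H^m(\R^3)}\n{\nablah (v,\theta)}_{H^m(\R^3)}^2$ after distributing half-derivatives in distinct directions among the three factors; that verification is precisely the content of the cited proposition.
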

Our second result of this paper reads as follows.
\begin{thm}\label{main-thm}
Let $0<\varepsilon< 1/4$. 
Then, 
there exist positive constants $C_{\varepsilon}$ and $\delta_{\varepsilon}$ such that
for any $(v_0,\theta_0)\in X^{8,4}(\R^3)$ with $\nabla \cdot v_0=0$ and $\n{(v_0,\theta_0)  }_{ X^{8,4}(\R^3)   }\leq \delta_{\varepsilon}$, the corresponding unique global solution $(v,\theta)\in C([0,\infty);H^8(\R^3))$ to \eqref{eq:B2} ensured by Lemma \ref{lemm:GWP-Sob} satisfies 
\begin{align}
    &
    \n{ 
    \Grad^{\alpha} 
    \Ph \vh (t) 
    }_{L^p(\R^3)} 
    \leq 
    C_{\varepsilon}
    (1+t)^{
    -(1-\frac{1}{p}) 
    -\f{|\alphah|}{2}
    }
    \n{(v_0,\theta_0)}_{X^{8,4}  (\R^3) },
    \\
    &
    \n{ 
    \Grad^{\alpha} 
    (\vh - \Ph \vh) (t) 
    }_{L^p(\R^3)} 
    \leq 
    C_{\varepsilon}
    (1+t)^{
    -(1-\frac{1}{p}) 
    -\f{|\alphah|}{2}
    -A_0(p)
    }
    \n{(v_0,\theta_0)}_{ X^{8,4} (\R^3) },
    \\
    &
    \n{ 
    \nablah^{\alphah} 
    v_{3}(t)
    }_{L^p(\R^3)}
    \leq 
    C_{\varepsilon}
    (1+t)^{
    -(1-\frac{1}{p})
    - \frac{|\alphah|}{2} - \frac{1}{4} - A_{\varepsilon}(p) 
    }
    \n{(v_0,\theta_0)}_{ X^{8,4} (\R^3) },
    \label{dec-v3}
    \\
    &
    \n{  
    \Grad^{\alpha} 
    \theta(t)}_{L^p(\R^3)}
    \leq 
    C_{\varepsilon}
    (1+t)^{
    -(1-\frac{1}{p}) 
    -\f{|\alphah|}{2}
    -A_0(p)
    }
    \n{(v_0,\theta_0)}_{ X^{8,4} (\R^3)}
\end{align} 
for all $2\leq p \leq\infty$, $t>0$, and $\alpha=(\alphah,\alpha_3) \in (\mathbb{N} \cup \{ 0\} )^2 \times (\mathbb{N} \cup \{ 0\} )$ with $|\alpha|\leq 1$.
Here, $\Ph = \{ \delta_{j,k} + \partial_{x_j}\partial_{x_k}(-\Deltah)^{-1}\}_{1 \leq j,k \leq 2}$ denotes the horizontal Helmholtz projection and we have set 
\begin{align}
    A_{\varepsilon}(p)
    :={}&
    \min
    \Mp{
    \sp{\frac{3}{4}-\varepsilon}
    \sp{1-\frac{2}{p}},
    {\frac{1}{4}}
    }, \qquad \varepsilon \geq 0.
\end{align}
\end{thm}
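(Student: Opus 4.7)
The plan is to combine Duhamel's formula with the linear decay estimates of Theorem \ref{thm:lin} and close the argument via a continuity/bootstrap scheme, using the global Sobolev bound of Lemma \ref{lemm:GWP-Sob} to absorb high-regularity factors arising in the nonlinear term. Write \eqref{eq:B2} as $\p_t U + LU = N(U)$ with $U=(v,\theta)$, $L$ the linearization from \eqref{eq:B-lin-0}, and $N(U) = (-\mathbb{P}(v\cdot\nabla)v, -(v\cdot\nabla)\theta)$, where $\mathbb{P}$ is the Leray projection, so that $U(t) = e^{-tL}U_0 + \int_0^t e^{-(t-\tau)L} N(U)(\tau)\, d\tau$. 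Introduce a bootstrap quantity $M(t)$ defined as the supremum over $0 \leq \tau \leq t$, $2 \leq p \leq \infty$, and $|\alpha| \leq 1$ of the four quantities appearing on the left-hand sides of the inequalities in Theorem \ref{main-thm}, each divided by its associated $\tau$-weight on the right-hand side. The objective is to prove $M(t) \leq C \n{(v_0,\theta_0)}_{X^{8,4}(\R^3)} + C_\varepsilon M(t)^2$, which, for data small relative to $C_\varepsilon$, absorbs the quadratic term and closes by standard continuity.

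\emph{Bilinear bounds.} To feed $N(U)(\tau)$ into Theorem \ref{thm:lin} one needs estimates of $(v \cdot \nabla) v$ and $(v \cdot \nabla)\theta$ in $X^{s_1, s_2}(\R^3)$ for some $s_1, s_2 \leq 4$. The $H^{s_1}$ contribution is handled by the uniform bound $\n{(v,\theta)(\tau)}_{H^8(\R^3)} \lesssim \delta_\varepsilon$ from Lemma \ref{lemm:GWP-Sob} combined with Sobolev product and embedding estimates. For the anisotropic $L^1_{\xh}W^{s_2,1}_{x_3}$-part, apply the Cauchy--Schwarz inequality $\n{fg}_{L^1_{\xh}L^1_{x_3}(\R^3)} \leq \n{f}_{L^2_{\xh}L^2_{x_3}(\R^3)}\n{g}_{L^2_{\xh}L^2_{x_3}(\R^3)}$ together with the Leibniz rule, and decompose the transport operator as $v \cdot \nabla = \Ph \vh \cdot \nablah + (\vh - \Ph\vh) \cdot \nablah + v_3 \p_{x_3}$. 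The piece $v_3 \p_{x_3}$ is then reorganised via the divergence-free identity $\p_{x_3} v_3 = -\nablah \cdot \vh$ applied to one of the factors, trading vertical derivatives for horizontal ones; this is essential both for fitting into the linear-estimate framework and for exploiting the enhanced decay of $v_3$.

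\emph{Time integration and enhanced rate for $v_3$.} Split the Duhamel integral at $t/2$. On $[0, t/2]$ the semigroup has acted for time at least $t/2$ and Theorem \ref{thm:lin} supplies the full target decay $(1+t)^{-\gamma}$; what must converge is $\int_0^{t/2} \n{N(U)(\tau)}_{X^{s_1,s_2}(\R^3)}\, d\tau$, which does because the bilinear estimate produces a product of two $M(t)$-weighted factors whose combined time-decay exponent is strictly greater than one. On $[t/2, t]$ the nonlinear source already sits at its large-time decay and one invokes Theorem \ref{thm:lin} applied with kernel time $(t - \tau)$; summing over the four component decompositions yields the desired $M(t)^2$ bound. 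The enhanced $(1+t)^{-1/4}$ factor in \eqref{dec-v3} is recovered either directly via the third estimate of Theorem \ref{thm:lin} applied to the Duhamel term, or, following Remark \ref{rem:thm:lin}(3), by writing $\p_{x_3} v_3 = -\nablah \cdot (\vh - \Ph\vh)$ and transferring the dispersive decay of $\vh - \Ph\vh$ onto the vertical derivative of $v_3$.

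\emph{Main obstacle.} The technically delicate point is matching the anisotropic and dispersive structure of the linear semigroup with bilinear bounds on $N(U)$ that are simultaneously compatible with the $X^{s_1,s_2}$-norms demanded by Theorem \ref{thm:lin} and with the sharp target decays of each of the four unknowns. Because $\Ph\vh$, $\vh-\Ph\vh$, $v_3$, $\theta$ enjoy strictly different decay rates, every bilinear interaction must be resolved into pieces so that a slowly decaying factor is always compensated by a rapidly decaying one; otherwise the $\tau$-integral in the Duhamel formula becomes only marginally convergent at borderline $p$-exponents, in particular near the saturation threshold $A_\varepsilon(p) = 1/4$ of \eqref{dec-v3}. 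The cap at $1/4$ in $A_\varepsilon$ reflects exactly this limitation of what the quadratic self-interaction can propagate regardless of how fast the linear dispersive decay is, and the small parameter $\varepsilon > 0$ is what absorbs the logarithmic loss that would otherwise appear in this borderline regime.
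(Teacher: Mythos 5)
Your overall skeleton --- Duhamel formula, the linear estimates of Theorem \ref{thm:lin} applied with kernel time $t-\tau$, splitting of the time integral at $t/2$, and a continuity/bootstrap closure --- is the same as the paper's. However, there is a genuine gap in your choice of bootstrap quantity, and it is not cosmetic: with $M(t)$ as you define it, the scheme does not close.

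You take $M(t)$ to consist only of the four $L^p$ quantities of the theorem with $|\alpha|\leq 1$, and you propose to handle all higher-regularity factors in the bilinear estimates by the uniform $H^8$ bound of Lemma \ref{lemm:GWP-Sob}. The problem is that the dispersive linear estimate (Corollary \ref{cor:disp}) requires the source in $\dot{\mathcal{B}}_{1,\infty}^{0,0}\cap\dot{\mathcal{B}}_{1,\infty}^{0,3}$, i.e.\ it consumes up to three or four \emph{vertical} derivatives of the quadratic source measured in $L^1_{\xh}W^{3+\alpha_3,1}_{x_3}$. After Cauchy--Schwarz and Leibniz this produces factors $\n{\partial_{x_3}^{m}v(\tau)}_{L^2}$ with $m$ up to $4$, and for the far-field integral $\int_0^{t/2}$ to stay bounded you need each such factor to \emph{decay} (at rate $(1+\tau)^{-1/2}$, resp.\ $(1+\tau)^{-3/4}$ for $v_3$), so that the product decays at least like $(1+\tau)^{-1}$. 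The $H^8$ bound gives only $O(1)$; interpolating between the $|\alpha|\leq1$ decay in $M(t)$ and $H^8$ boundedness gives e.g.\ $\n{\partial_{x_3}^3v}_{L^2}\lesssim(1+\tau)^{-5/14}$, whose square is not integrable on $[0,t/2]$, and the resulting $t^{2/7}$-type growth destroys the target rate. For exactly this reason the paper's bootstrap norm $Y_\varepsilon$ additionally carries the weighted norms $(1+t)^{\frac12+\frac{|\alphah|}{2}}\n{\nablah^{\alphah}\partial_{x_3}^k(\vh,\theta)(t)}_{L^2}$ and $(1+t)^{\frac34+\frac{|\alphah|}{2}}\n{\nablah^{\alphah}\partial_{x_3}^kv_3(t)}_{L^2}$ for $k=0,\dots,4$, as well as the anisotropic Besov norms $(1+t)\n{(\vh,\theta)(t)}_{\dot{\mathcal{B}}_{2,1}^{1,1/2}}$ and $(1+t)^{5/4}\n{v_3(t)}_{\dot{\mathcal{B}}_{2,1}^{1,1/2}}$; the latter (with the algebra property of Lemma \ref{lemm:para}) are what close the near-field $L^\infty$ estimates when $|\alphah|=1$, where two derivatives fall on the product $v\otimes v$ and your $M(t)$ again contains no usable information. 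All of these auxiliary quantities must themselves be propagated through the Duhamel iteration (Lemmas \ref{lemm:B} and \ref{lemm:L^2}), which is the bulk of the paper's work and is absent from your outline.

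Two smaller points. First, your alternative route to the enhanced rate for $v_3$ via $\partial_{x_3}v_3=-\nablah\cdot(\vh-\Ph\vh)$ only yields decay of $\partial_{x_3}v_3$, not of $v_3$ itself; the paper instead exploits the symbol identity $\partial_{x_3}Q^{\rm vel,v}_{\pm}=|\nablah|\widetilde{Q^{\rm vel,v}_{\pm}}$ inside the linear propagator and the fact that $Q^{\rm vel,v}_{\pm}$ applied to the divergence-form nonlinearity gains a full extra factor $t^{-1/2}$ in the Duhamel terms. Second, the cap $1/4$ in $A_\varepsilon(p)$ indeed originates in the near-field nonlinear terms (the estimates of $J_{2,p}$ in Lemma \ref{lemm:vel_h-temp}), but the parameter $\varepsilon$ is already incurred in the \emph{linear} enhanced-dissipation estimate for $v_3$ in Theorem \ref{thm:lin}; it is not introduced to absorb a logarithm in the nonlinear borderline regime.
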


\begin{rem}
    Let us provide some comments on Theorem \ref{main-thm}.
    \begin{enumerate}
        \item 
        In comparison to the decay estimate \eqref{decay:JWY} by \cite{Ji-Yan-Wu-22}, not only do we obtain the $L^2$-decay estimates with the anisotropic enhanced dissipative effect:
        \begin{align}
            \n{\nabla^{\alpha}(\vh,\theta)(t)}_{L^2(\R^3)}
            =
            O(t^{-\frac{1}{2}-\frac{|\alphah|}{2}}),
            \quad
            \n{\nablah^{\alphah}v_3(t)}_{L^2(\R^3)}
            =
            O(t^{-\frac{3}{4}-\frac{|\alphah|}{2}})
        \end{align}
        but also extend to the general $L^p$ case and capture the dispersive effect from the stratification. 
        \item 
        Comparing with Theorem \ref{thm:lin}, we see that the nonlinear dispersive decay rates $O(t^{-A_0(p)})$ for $(\vh -\Ph \vh,\theta)$ and $O(t^{-A_{\varepsilon}(p)})$ for $v_3$ become slower than the linear dispersive decay estimates for large $p$.
        This is because some portions of nonlinear terms have some worse decay rates than linear decay rates; see the estimates of $\{\mathcal{D}_{\pm,j}^{\rm vel,h}[v,\theta]\}_{j=2,6}$ and $\{\mathcal{D}_{\pm,j}^{\rm temp}[v,\theta]\}_{j=2,6}$ in Lemma \ref{lemm:vel_h-temp}.
        \item 
        From the same observation as in the third statement of Remark \ref{rem:thm:lin}, we obtain the decay estimate of $\partial_{x_3}v_3$ as follows:
        \begin{align}
            \n{\partial_{x_3}v_3(t)}_{L^p(\R^3)}
            ={}&
            \n{\nablah \cdot \sp{\vh(t) - \Ph \vh(t)}}_{L^p(\R^3)}
            \\
            \leq{}&
            C_{\varepsilon}
            (1+t)^{  
            -(1-\frac{1}{p})- \frac{1}{2}-A_0(p) }
            \n{(v_0,\theta_0)}_{ X^{8,4}(\R^3) }.
        \end{align}
    \end{enumerate}
\end{rem}

\section{Anisotropic Littlewood--Paley theory}\label{sec:pre}
In this section, we prepare the anisotropic Littlewood--Paley theory, which plays a key role in our analysis. 
The facts discussed in this section are used without reference in the following sections and thereafter, so the readers are encouraged to refer back to this section as needed.
Let $\phi \in C_c^{\infty}([0,\infty))$ satisfy
\begin{align}\label{phi}
    0 \leq \phi(r) \leq1 \ {\rm on}\ [0,\infty),
    \quad
    \supp \phi \subset \lp{2^{-1},2},
    \quad
    \sum_{j \in \mathbb{Z}} \phi(2^{-j}r) = 1 \ {\rm on}\ (0,\infty).
\end{align}
Let $\phih_j(\xih):=\phi(2^{-j}|\xih|)$ and $\phiv_k(\xi_3):=\phi(2^{-k}|\xi_3|)$ for $\xih \in \mathbb{R}^2$, $\xi_3 \in \mathbb{R}$, and $j,k \in \mathbb{Z}$.
We define the anisotropic Littlewood--Paley frequency localization operators as 
\begin{align}
    \Deltahh_j f := \mathscr{F}^{-1}_{\mathbb{R}^2}\lp{\phih_j(\xih)\mathscr{F}_{\mathbb{R}^2}[f](\xih)},\quad
    \Deltav_k f := \mathscr{F}^{-1}_{\mathbb{R}}\lp{\phiv_k(\xi_3)\mathscr{F}_{\mathbb{R}}[f](\xi_3)}.
\end{align}
Here, we state the Bernstein inequalities of anisotropic type.
\begin{lemm}\label{lemm:Bern}
    The following statements hold.
    \begin{enumerate}
        \item 
        Let $1 \leq p_1 \leq p_2 \leq \infty$.
        Then, there exists a positive constant $C=C(p_1,p_2)$ such that 
        \begin{align}
            \n{\Deltahh_j\Deltav_k f}_{L^{p_2}(\R^3)}
            \leq
            C
            2^{2(\frac{1}{p_1}-\frac{1}{p_2})j}
            2^{(\frac{1}{p_1}-\frac{1}{p_2})k}
            \n{\Deltahh_j\Deltav_k f}_{L^{p_1}(\R^3)}
        \end{align}
        for all $j,k \in \Z$ and $f$ provided that the right-hand side is finite.
        \item 
        Let $1 \leq p \leq \infty$ and $m,\ell \in \mathbb{N}$.
        Then, it holds
        \begin{align}
            \max_{|\alphah|=m}
            \n{\Deltahh_j\Deltav_k \nablah^{\alphah}\partial_{x_3}^{\ell} f}_{L^{p}(\R^3)}
            \sim
            2^{mj}
            2^{\ell k}
            \n{\Deltahh_j\Deltav_k f}_{L^{p}(\R^3)}.
        \end{align}
        Moreover, for any $s,\sigma \in \R$, it holds
        \begin{align}
            \n{\Deltahh_j\Deltav_k |\nablah|^s|\partial_{x_3}|^{\sigma} f}_{L^{p}(\R^3)}
            \sim
            2^{sj}
            2^{\sigma k}
            \n{\Deltahh_j\Deltav_k f}_{L^{p}(\R^3)}.
        \end{align}
    \end{enumerate}
\end{lemm}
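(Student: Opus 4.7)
The plan is to realize each frequency block $\Deltahh_j \Deltav_k f$ as a convolution against dilated Schwartz kernels and then reduce every statement to Young's convolution inequality. First I would fix an auxiliary scalar cutoff $\widetilde{\phi} \in C_c^\infty((0,\infty))$ with $\widetilde{\phi} \equiv 1$ on $\supp \phi$ and $\supp \widetilde{\phi} \subset (1/4,4)$, and set $\widetilde{\phi}^{\rm h}_j(\xih) := \widetilde{\phi}(2^{-j}|\xih|)$ and $\widetilde{\phi}^{\rm v}_k(\xi_3) := \widetilde{\phi}(2^{-k}|\xi_3|)$. Since $\phih_j = \phih_j \widetilde{\phi}^{\rm h}_j$ and $\phiv_k = \phiv_k \widetilde{\phi}^{\rm v}_k$, the reproducing identity
\begin{align}
    \Deltahh_j \Deltav_k f = (\widetilde{h}_j \otimes \widetilde{v}_k) * \Deltahh_j \Deltav_k f
\end{align}
holds, where $\widetilde{h}_j(y_h) = 2^{2j}\widetilde{h}_0(2^j y_h)$ and $\widetilde{v}_k(y_3) = 2^k \widetilde{v}_0(2^k y_3)$ are anisotropic dilations of the fixed Schwartz profiles $\widetilde{h}_0 = \mathcal{F}^{-1}_{\R^2}[\widetilde{\phi}(|\cdot|)]$ and $\widetilde{v}_0 = \mathcal{F}^{-1}_{\R}[\widetilde{\phi}(|\cdot|)]$.

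For part (1), setting $1/r := 1 - 1/p_1 + 1/p_2$, Young's inequality applied to the tensor convolution yields
\begin{align}
    \n{\Deltahh_j \Deltav_k f}_{L^{p_2}(\R^3)} \leq \n{\widetilde{h}_j}_{L^r(\R^2)} \n{\widetilde{v}_k}_{L^r(\R)} \n{\Deltahh_j \Deltav_k f}_{L^{p_1}(\R^3)},
\end{align}
and the scaling identities $\n{\widetilde{h}_j}_{L^r(\R^2)} = C\, 2^{2j(1/p_1 - 1/p_2)}$ and $\n{\widetilde{v}_k}_{L^r(\R)} = C\, 2^{k(1/p_1 - 1/p_2)}$ close the claim.

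For part (2), the $\leq$ direction follows from the same template: the convolution kernel $K_{j,k}^{\alphah,\ell}$ with Fourier symbol $(i\xih)^{\alphah} (i\xi_3)^\ell \widetilde{\phi}^{\rm h}_j(\xih) \widetilde{\phi}^{\rm v}_k(\xi_3)$ is the anisotropic dilation of the fixed Schwartz function $\eta_h^{\alphah} \eta_3^\ell \widetilde{\phi}(|\eta_h|) \widetilde{\phi}(|\eta_3|)$, so $\n{K_{j,k}^{\alphah,\ell}}_{L^1(\R^3)} \leq C\, 2^{mj} 2^{\ell k}$, and Young produces the upper bound on $\n{\Deltahh_j\Deltav_k \nablah^{\alphah}\p_{x_3}^\ell f}_{L^p}$. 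For the reverse inequality, I exploit that $\supp(\widetilde{\phi}^{\rm h}_j \widetilde{\phi}^{\rm v}_k)$ stays quantitatively away from $\xih = 0$ and $\xi_3 = 0$: combining the multinomial identity $|\xih|^{2m} = \sum_{|\alphah|=m} \binom{m}{\alphah} \xih^{2\alphah}$ and its one-dimensional analogue for $\xi_3^{2\ell}$ with a division by these polynomials on the support lets me expand
\begin{align}
    \Deltahh_j \Deltav_k f = \sum_{|\alphah| = m} \widetilde{K}_{j,k,\alphah,\ell} * \Deltahh_j \Deltav_k \nablah^{\alphah} \p_{x_3}^\ell f,
\end{align}
where each $\widetilde{K}_{j,k,\alphah,\ell}$ rescales from a fixed Schwartz function and satisfies $\n{\widetilde{K}_{j,k,\alphah,\ell}}_{L^1(\R^3)} \leq C\, 2^{-mj} 2^{-\ell k}$; absorbing the finite sum into the maximum yields the lower bound.

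The fractional extension follows verbatim upon replacing $(i\xih)^{\alphah}(i\xi_3)^\ell$ with $|\xih|^s|\xi_3|^\sigma$: for any $s, \sigma \in \R$, the functions $|\eta_h|^{\pm s}\widetilde{\phi}(|\eta_h|)$ and $|\eta_3|^{\pm \sigma}\widetilde{\phi}(|\eta_3|)$ remain smooth and compactly supported in annuli avoiding the origin, hence produce fixed Schwartz profiles that contribute exactly the scaling factors $2^{\pm sj} 2^{\pm \sigma k}$. The only mildly delicate step will be the lower bound in (2), where the uniform-in-$(j,k)$ inversion of the differentials must be handled carefully to keep the supports quantitatively away from the coordinate hyperplanes; everything else is a mechanical rescaling-and-Young computation.
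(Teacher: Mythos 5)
Your proposal is correct and is exactly the argument the paper has in mind: the paper omits the proof of Lemma \ref{lemm:Bern}, remarking that it follows "by the same strategy for the isotropic case" in Bahouri--Chemin--Danchin, and your dilated-kernel-plus-Young scheme — including the multinomial division trick on the annular support for the reverse inequality and the smooth fractional symbols $|\xi_{\rm h}|^{\pm s}\widetilde{\phi}$, $|\xi_3|^{\pm\sigma}\widetilde{\phi}$ — is precisely that standard strategy adapted to the anisotropic dilations $2^{2j}h(2^j y_{\rm h})\otimes 2^k v(2^k y_3)$. The uniformity in $(j,k)$ that you flag as delicate is indeed settled by your rescaling to fixed $j,k$-independent Schwartz profiles, so no gap remains.
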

As we may prove Lemma \ref{lemm:Bern} by the same strategy for the isotropic case (see \cite{Bah-Che-Dan-11} for instance), we omit the proof.

Now, we define the anisotropic Besov spaces.
\begin{df}
For $1 \leq p,q \leq \infty$, and $s_1,s_2 \in \R$,
the anisotropic Besov space $\dot{\mathcal{B}}_{p,q}^{s_1,s_2}(\R^3)$ is defined by the set of all tempered distributions $f$ on $\R^3$ satisfying the following semi-norm is finite:
\begin{align}
    \n{f}_{\dot{\mathcal{B}}_{p,q}^{s_1,s_2}(\R^3)}
    :=
    \n{
    \Mp{
    2^{s_1j}2^{s_2k}
    \n{\Deltahh_j\Deltav_k f}_{L^p(\R^3)}
    }_{(j,k)\in \mathbb{Z}^2}
    }_{\ell^q(\mathbb{Z}^2)}.
\end{align}
\end{df}
We notice that our anisotropic Besov spaces are different from the well-used ones introduced in \cites{Pai05,CZ07}.
Therefore, we summarize the basic properties of the anisotropic Besov spaces.
First, we mention embedding relations.
\begin{lemm}\label{lemm:emb}
The following properties hold.
\begin{enumerate}
    \item
    For any $1 \leq p \leq \infty$, the continuous embedding $\dot{\mathcal{B}}_{p,1}^{0,0}(\R^3) \hookrightarrow L^p(\R^3) \hookrightarrow \dot{\mathcal{B}}_{p,\infty}^{0,0}(\R^3)$ holds. 
    \item 
    Let $1 \leq p \leq \infty$, $1 \leq q_1 \leq q_2 \leq \infty$ and $s_1,s_2 \in \mathbb{R}$.
    Then, the continuous embedding $\dot{\mathcal{B}}_{p,q_1}^{s_1,s_2}(\R^3) \hookrightarrow \dot{\mathcal{B}}_{p,q_2}^{s_1,s_2}(\R^3)$ holds.
    \item 
    Let $s_1$, $s_2$, and $s$ be positive real numbers satisfying $s>s_1+s_2$. Then, the continuous embedding $H^s(\mathbb{R}^3) \hookrightarrow \dot{\mathcal{B}}_{2,1}^{s_1,s_2}(\R^3)$ holds. 
\end{enumerate}
\end{lemm}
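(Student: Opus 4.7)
The plan is to handle the three assertions separately, all by elementary Fourier-analytic arguments.

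For (i), I would reduce the embedding $L^p \hookrightarrow \dot{\mathcal{B}}_{p,\infty}^{0,0}(\R^3)$ to the uniform bound $\n{\Deltahh_j\Deltav_k f}_{L^p} \leq C\n{f}_{L^p}$ with $C$ independent of $(j,k)$. Realising $\Deltahh_j\Deltav_k$ as convolution against the dilated Schwartz kernel $2^{2j+k}\Phi(2^j\xh,2^k x_3)$ with $\Phi := \mathscr F^{-1}[\phi^{\rm h}\otimes\phi^{\rm v}]$, whose $L^1$-norm is dilation-invariant, this follows from Young's inequality. The reverse inclusion $\dot{\mathcal{B}}_{p,1}^{0,0}(\R^3)\hookrightarrow L^p(\R^3)$ then follows from the reconstruction identity $f = \sum_{j,k}\Deltahh_j\Deltav_k f$ and the triangle inequality in $L^p$. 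Assertion (ii) is purely sequence-theoretic: it is the embedding $\ell^{q_1}(\Z^2)\hookrightarrow\ell^{q_2}(\Z^2)$ applied to the sequence $\{2^{s_1 j+s_2 k}\n{\Deltahh_j\Deltav_k f}_{L^p}\}$.

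The content lies in (iii), which I would attack via Cauchy--Schwarz followed by a numerical convergence check. Set $a_{j,k}:=\n{\Deltahh_j\Deltav_k f}_{L^2}$. Since $1+|\xi|^2\sim 1+2^{2j}+2^{2k}$ on $\supp(\phih_j\otimes\phiv_k)$ and the family $\{(\phih_j\phiv_k)^2\}_{j,k}$ has uniformly bounded overlap, Plancherel yields
\begin{align}
\sum_{(j,k)\in\Z^2}(1+2^j+2^k)^{2s}\,a_{j,k}^2 \leq C\n{f}_{H^s(\R^3)}^{2}.
\end{align}
Cauchy--Schwarz in $(j,k)$ then gives
\begin{align}
\n{f}_{\dot{\mathcal{B}}_{2,1}^{s_1,s_2}(\R^3)}^{2} \leq \Big(\sum_{(j,k)\in\Z^2} 2^{2s_1 j+2s_2 k}(1+2^j+2^k)^{-2s}\Big)\Big(\sum_{(j,k)\in\Z^2}(1+2^j+2^k)^{2s}a_{j,k}^2\Big),
\end{align}
so everything reduces to finiteness of the first, purely numerical, double series.

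I expect the sole obstacle, and the only place where the hypothesis $s>s_1+s_2$ enters, to be this last check. Splitting $\Z^2$ into four quadrants: on $\{j,k\leq 0\}$ the weight $(1+2^j+2^k)^{-2s}$ is bounded and summability comes from $s_1,s_2>0$; on the mixed quadrants $\{j\geq 0,\,k\leq 0\}$ and its mirror image, estimating $(1+2^j+2^k)^{-2s}$ by $2^{-2sj}$ or $2^{-2sk}$ decouples the sum into geometric series requiring only $s>s_i$ and $s_{3-i}>0$. The decisive quadrant is $\{j,k\geq 0\}$: bounding $(1+2^j+2^k)^{-2s}\leq\max(2^j,2^k)^{-2s}$ and summing the smaller index first (using $s_1,s_2>0$) reduces the tail to $\sum_{j\geq 0}2^{2(s_1+s_2-s)j}$, which converges precisely when $s>s_1+s_2$. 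This identifies the hypothesis as sharp for this Cauchy--Schwarz approach.
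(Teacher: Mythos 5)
Your proposal is correct, and for parts (i) and (ii) you supply exactly the standard arguments the paper omits (it simply asserts these follow ``by the same method as in the usual Besov spaces case''). For part (iii), however, your route is genuinely different from the paper's. You run a single Cauchy--Schwarz against the isotropic weight $(1+2^j+2^k)^{2s}$, using the almost-orthogonality bound $\sum_{j,k}(1+2^j+2^k)^{2s}\n{\Deltahh_j\Deltav_k f}_{L^2}^2\leq C\n{f}_{H^s}^2$, and then reduce everything to the convergence of one numerical double series checked quadrant by quadrant. The paper instead writes $s=s_3+s_4$ with $s_3>s_1$, $s_4>s_2$ (possible precisely because $s>s_1+s_2$), performs two successive one-dimensional splittings of the sums over $k$ and then over $j$ into low and high frequencies, and trades the factors $2^{s_4k}$ and $2^{s_3j}$ for the fractional derivatives $|\partial_{x_3}|^{s_4}$ and $|\nablah|^{s_3}$ via the Bernstein equivalences of Lemma \ref{lemm:Bern}, ending with the bound $\n{f}_{\dot{\mathcal{B}}_{2,1}^{s_1,s_2}}\leq C(\n{f}_{L^2}+\n{|\nablah|^{s_3}f}_{L^2}+\n{|\partial_{x_3}|^{s_4}f}_{L^2}+\n{|\nablah|^{s_3}|\partial_{x_3}|^{s_4}f}_{L^2})\leq C\n{f}_{H^s}$. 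Both arguments are elementary and invoke $s>s_1+s_2$ in the same essential place (summability of the geometric tails in the quadrant where both frequencies are large); your version localizes the hypothesis more transparently in a single scalar series and shows it is sharp for the method, while the paper's version makes explicit which mixed anisotropic derivatives of $f$ actually control the Besov norm, which is in the spirit of the rest of their analysis. One small point worth being careful about in (i): the reconstruction $f=\sum_{j,k}\Deltahh_j\Deltav_k f$ for a general tempered distribution only holds modulo polynomials, so for the embedding $\dot{\mathcal{B}}_{p,1}^{0,0}\hookrightarrow L^p$ one should say that the series converges in $L^p$ and defines the canonical realization of $f$; this is the usual caveat for homogeneous spaces and does not affect the substance of your argument.
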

\begin{proof}
The first and second claims are proved by the same method as in the usual Besov spaces case. 
Therefore, we only focus on the proof of the third claim.
Let $s=s_3+s_4$ with $s_3>s_1$ and {$s_4>s_2$}. 
Then, we have
\begin{align}
    &
    \sum_{k \in \mathbb{Z}}
    2^{s_2k}
    \n{\Deltahh_j\Deltav_kf}_{L^2(\R^3)}
    ={}
    \sum_{k \leq 0}
    2^{s_2k}
    \n{\Deltahh_j\Deltav_kf}_{L^2(\R^3)}
    +
    \sum_{k \geq 1}
    2^{s_2k}
    \n{\Deltahh_j\Deltav_kf}_{L^2(\R^3)}\\
    &\quad
    \leq{}
    \sum_{k \leq 0}
    2^{s_2k}
    \sup_{k\leq0}
    \n{\Deltahh_j\Deltav_kf}_{L^2(\R^3)}
    +
    \sum_{k \geq 1}
    2^{(s_2-s_4)k}
    \sup_{k\geq1}
  {  2^{s_4k}  }
    \n{\Deltahh_j\Deltav_kf}_{L^2(\R^3)}\\
    &\quad
    \leq{}
    C
    \sup_{k\leq 0}
    \n{\Deltahh_j\Deltav_kf}_{L^2(\R^3)}
    +
    C
    \sup_{k \geq 1}
    \n{\Deltahh_j\Deltav_k|\partial_{x_3}|^{s_4}f}_{L^2(\R^3)}\\
    &\quad\leq{}
    C
    \n{\Deltahh_jf}_{L^2(\R^3)}
    +
    C
    \n{\Deltahh_j|\partial_{x_3}|^{s_4}f}_{L^2(\R^3)}.
\end{align}
Thus, multiplying this by $2^{s_1j}$ and taking $\ell^1(\Z)$ norm with respect to $j$, we see that 
\begin{align}
    \n{f}_{\dot{\mathcal{B}}_{2,1}^{s_1,s_2}}
    \leq{}&
    C
    \sum_{j\in \mathbb{Z}}
    2^{s_1j}
    \n{\Deltahh_jf}_{L^2(\R^3)}
    +
    C
    \sum_{j \in \mathbb{Z}}
    2^{s_1j}
    \n{\Deltahh_j|\partial_{x_3}|^{s_4}f}_{L^2(\R^3)}.
\end{align}
By the similar argument as above, we have 
\begin{align}
    &
    \begin{aligned}
    \sum_{j\in \mathbb{Z}}
    2^{s_1j}
    \n{\Deltahh_jf}_{L^2(\R^3)}
    \leq{}&
    C\n{f}_{L^2(\R^3)}
    +
    C\n{|\nablah|^{s_3}f}_{L^2(\R^3)}\\
    \leq{}&
    C\n{f}_{H^s(\R^3)},
    \end{aligned}
    \\
    &
    \begin{aligned}
    \sum_{j\in \mathbb{Z}}
    2^{s_1j}
    \n{\Deltahh_j|\partial_{x_3}|^{s_4}f}_{L^2(\R^3)}
    \leq{}&
    C\n{|\partial_{x_3}|^{s_4}f}_{L^2(\R^3)}
    +
    C\n{|\nablah|^{s_3}|\partial_{x_3}|^{s_4}f}_{L^2(\R^3)},\\
    \leq{}&
    C\n{f}_{H^s(\R^3)},
    \end{aligned}
\end{align}
which completes the proof.
\end{proof}
Next, we focus on the interpolation inequalities.
\begin{lemm}\label{lemm:inter}
The following statements hold.
\begin{enumerate}
    \item 
    Let { $s_1,s_2 \in \mathbb{R}$ with $s_1<s_2$}, $0<\vartheta<1$, and $s = \vartheta s_1 + (1-\vartheta) s_2$. Then there exists a positive constant $C=C(s_1,s_2,\vartheta)$ such that  
    \begin{align}
    \sum_{k \in \mathbb{Z}}
    2^{sk}
    \sup_{j \in \mathbb{Z}}
    \n{ \Deltahh_j \Deltav_k f }_{L^p(\R^3)}
    \leq 
    C
    \n{f}_{\dot{\mathcal{B}}_{p,\infty}^{0,s_1} { (\R^3) } }
    ^{\vartheta}
    \n{f}_{\dot{\mathcal{B}}_{p,\infty}^{0,s_2} { (\R^3) }}
    ^{1-\vartheta}
    \end{align}
    for all $1\leq p \leq \infty$ and $f \in \dot{\mathcal{B}}_{p,\infty}^{0,s_1}(\R^3) \cap \dot{\mathcal{B}}_{p,\infty}^{0,s_2}(\R^3)$.
    \item 
    Let $1\leq p,q \leq \infty$.
    Let $s,s_1,s_2,\sigma,\sigma_1,\sigma_2 \in \mathbb{R}$ and $0 \leq \vartheta \leq 1$ 
    satisfy 
    $s=\vartheta s_1+(1-\vartheta)s_2$, 
    and 
    $\sigma=\vartheta \sigma_1+(1-\vartheta)\sigma_2$.
    Then, it holds
    \begin{align}
        \n{f}_{\dot{\mathcal{B}}_{p,q}^{s,\sigma}(\R^3)}
        \leq 
        \n{f}_{\dot{\mathcal{B}}_{p,q}^{s_1,\sigma_1}(\R^3)}^{\vartheta}
        \n{f}_{\dot{\mathcal{B}}_{p,q}^{s_2,\sigma_2}(\R^3)}^{1-\vartheta}
    \end{align}
    for all $f \in \dot{\mathcal{B}}_{p,q}^{s_1,\sigma_1}(\R^3) \cap \dot{\mathcal{B}}_{p,q}^{s_2,\sigma_2}(\R^3)$.
\end{enumerate}
\end{lemm}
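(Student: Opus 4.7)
The plan is to prove both claims by elementary decomposition arguments that exploit the definition of the anisotropic Besov norm. Neither part requires heavy machinery: the tools are a dyadic splitting for the first claim and Hölder's inequality on $\ell^q(\Z^2)$ for the second.

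For claim (1), the starting observation is that
\begin{align}
\sup_{j\in\Z}\n{\Deltahh_j\Deltav_k f}_{L^p(\R^3)}
\leq
2^{-s_ik}\,\n{f}_{\dot{\mathcal{B}}_{p,\infty}^{0,s_i}(\R^3)}
\qquad(i=1,2),
\end{align}
directly from the definition of the semi-norm. Splitting the sum over $k\in\Z$ at a threshold $K$ to be chosen, I would bound the contribution from $k\leq K$ using the $s_1$-norm and that from $k>K$ using the $s_2$-norm, which produces
\begin{align}
\sum_{k\in\Z} 2^{sk}\sup_j \n{\Deltahh_j\Deltav_k f}_{L^p(\R^3)}
\leq
C\,\n{f}_{\dot{\mathcal{B}}_{p,\infty}^{0,s_1}(\R^3)}\, 2^{(s-s_1)K}
+C\,\n{f}_{\dot{\mathcal{B}}_{p,\infty}^{0,s_2}(\R^3)}\, 2^{(s-s_2)K}.
\end{align}
Both geometric series converge because $s_1<s<s_2$. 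Optimizing in $K$, i.e.\ choosing $K\in\Z$ nearest to the solution of $2^{(s_2-s_1)K} = \n{f}_{\dot{\mathcal{B}}_{p,\infty}^{0,s_2}}/\n{f}_{\dot{\mathcal{B}}_{p,\infty}^{0,s_1}}$, and using $s-s_1=(1-\vartheta)(s_2-s_1)$ together with $s-s_2=-\vartheta(s_2-s_1)$, collapses the right-hand side to $C\n{f}_{\dot{\mathcal{B}}_{p,\infty}^{0,s_1}}^{\vartheta}\n{f}_{\dot{\mathcal{B}}_{p,\infty}^{0,s_2}}^{1-\vartheta}$. The trivial cases where one of the norms vanishes or is infinite can be handled separately.

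For claim (2), the approach is direct Hölder interpolation on the Besov sequence. Using $s=\vartheta s_1+(1-\vartheta)s_2$ and $\sigma=\vartheta\sigma_1+(1-\vartheta)\sigma_2$, the pointwise identity
\begin{align}
2^{sj}2^{\sigma k}\n{\Deltahh_j\Deltav_k f}_{L^p(\R^3)}
=\bigl(2^{s_1j}2^{\sigma_1 k}\n{\Deltahh_j\Deltav_k f}_{L^p(\R^3)}\bigr)^{\vartheta}
\bigl(2^{s_2j}2^{\sigma_2 k}\n{\Deltahh_j\Deltav_k f}_{L^p(\R^3)}\bigr)^{1-\vartheta}
\end{align}
holds on $\Z^2$. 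For $1\leq q<\infty$, raising to the $q$-th power and applying Hölder in $\ell^{1/\vartheta}(\Z^2)\times\ell^{1/(1-\vartheta)}(\Z^2)$ yields the claimed bound after taking the $q$-th root; the case $q=\infty$ follows by taking suprema in the pointwise identity.

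No step in this plan is a genuine obstacle; the proof is essentially bookkeeping. The only detail requiring care is the balancing of the two geometric tails in claim (1), where one must verify that the optimal choice of $K$ produces precisely the exponents $\vartheta$ and $1-\vartheta$ in the final product. This is exactly where the hypothesis $s=\vartheta s_1+(1-\vartheta)s_2$ enters and, with the decomposition $(s-s_1)/(s_2-s_1)=1-\vartheta$ in hand, the algebra works out to the stated conclusion.
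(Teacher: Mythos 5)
Your proposal is correct and follows essentially the same route as the paper: claim (1) is proved by splitting the $k$-sum at an optimally chosen threshold, summing the two geometric tails against the $\dot{\mathcal{B}}_{p,\infty}^{0,s_1}$ and $\dot{\mathcal{B}}_{p,\infty}^{0,s_2}$ norms respectively, and balancing via $2^{(s_2-s_1)K}\approx \n{f}_{\dot{\mathcal{B}}_{p,\infty}^{0,s_2}}/\n{f}_{\dot{\mathcal{B}}_{p,\infty}^{0,s_1}}$; claim (2) is the same pointwise factorization followed by H\"older's inequality on $\ell^q(\Z^2)$. No gaps.
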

\begin{proof} 
The strategy of the proof is based on \cite{Bah-Che-Dan-11}*{Proposition 2.22}.
We prove the first claim.
It suffices to consider the case $\n{f}_{\dot{\mathcal{B}}_{p,{\infty} }^{0,s_1}(\R^3)} \neq 0$.
We see that
\begin{align}
    \sum_{k \in \mathbb{Z}}
    2^{sk}
    \sup_{j \in \mathbb{Z}}
    \n{ \Deltahh_j \Deltav_k f }_{L^p(\R^3)}
    ={}&
    \sum_{j\leq N}
    2^{k  (\vartheta s_1+(1-\vartheta) s_2)   }
    \sup_{j \in \mathbb{Z}}
    \n{\Deltahh_j\Deltav_kf}_{L^p(\mathbb{R}^3)} 
    \\
    &+
    \sum_{j>N} 
    2^{k  (\vartheta s_1+(1-\vartheta) s_2)   }
    \sup_{j \in \mathbb{Z}}
    \n{\Deltahh_j\Deltav_kf}_{L^p(\mathbb{R}^3)}\\
    \leq{}
    &
    \sp{\sum_{j\leq N}
    2^{k  (1-\vartheta) (s_2-s_1)   }
    }
    \sup_{j,k \in \mathbb{Z}}
  {  2^{s_1 k}  }
    \n{\Deltahh_j\Deltav_kf}_{L^p(\mathbb{R}^3)}
    \\
    &
    +
    \sp{\sum_{j>N} 
    2^{-k  \vartheta (s_2-s_1)   }
    }\sup_{j,k \in \mathbb{Z}}
   { 2^{s_2 k}  }
    \n{\Deltahh_j\Deltav_kf}_{L^p(\mathbb{R}^3)}\\
    \leq{}&
    \f{  2^{N (1-\vartheta) (s_2-s_1)   }   }
    {  2^{  (1-\vartheta) (s_2-s_1)   } -1  }
    \n{f}_{\dot{\mathcal{B}}_{p,{\infty } }^{0,s_1}(\R^3)}
    \\
    &
    +
    \f{   2^{-N  \vartheta (s_2-s_1)   }   }
    { 1- 2^{-  \vartheta (s_2-s_1)   }    }
    \n{f}_{\dot{\mathcal{B}}_{p,{ \infty} }^{0,s_2}(\R^3)}.
\end{align}
Choosing $N$ so that
\begin{align}
    2^{(N-1) (s_2-s_1) } 
    <   
    \dfrac
    {\n{f}_{\dot{\mathcal{B}}_{p,{\infty} }^{0,s_2}(\R^3)}}
    {\n{f}_{\dot{\mathcal{B}}_{p,{ \infty} }^{0,s_1}(\R^3)}}
    \leq 
    2^{N (s_2-s_1) },
\end{align}
we complete the proof of the first claim.

For the second assertion,
the H\"older inequality yields
\begin{align}
    \n{f}_{\dot{\mathcal{B}}_{p,q}^{s,\sigma}(\R^3)}
    ={}&
    \n{
    \Mp{
    \sp{
    2^{s_1j}2^{\sigma_1k}
    \n{\Deltahh_j\Deltav_k f}_{L^p}
    }^{\vartheta}
    \sp{
    2^{s_2j}2^{\sigma_2k}
    \n{\Deltahh_j\Deltav_k f}_{L^p}
    }^{1-\vartheta}
    }_{j,k}
    }_{\ell^q}\\
    \leq{}&
    \n{f}_{\dot{\mathcal{B}}_{p,q}^{s_1,\sigma_1}(\R^3)}^{\vartheta}
    \n{f}_{\dot{\mathcal{B}}_{p,q}^{s_2,\sigma_2}(\R^3)}^{1-\vartheta}.
\end{align}
Thus, we complete the proof.
\end{proof}
In the following lemma, we state the action of the horizontal heat kernel in $\R^3$.
\begin{lemm}
    There exist absolute positive constants $c$ and $C$ such that 
    \begin{align}
        \n{e^{t\Deltah}\Deltahh_jf}_{L^2(\R^3)}
        \leq
        C
        e^{-c2^{2j}t}
        \n{\Deltahh_jf}_{L^2(\R^3)}
    \end{align}
    for all $j \in \mathbb{Z}$, $t>0$, and $f$ provided that the right-hand side is finite.
    Moreover, for any $s>0$, there exists a positive constant $C=C(s)$ such that 
    \begin{align}
        \sum_{j \in \mathbb{Z}}
        2^{sj}
        \n{e^{t\Deltah}\Deltahh_jf}_{L^2(\R^3)}
        \leq
        Ct^{-\frac{s}{2}}
        \sup_{j \in \mathbb{Z}}\n{\Deltahh_jf}_{L^2(\R^3)}
    \end{align}
    for all $t>0$ and $f$ provided that the right-hand side is finite.
\end{lemm}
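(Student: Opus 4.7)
The plan is to reduce the first estimate to a horizontal Plancherel computation, and then to derive the second one from the first by a standard $j$-splitting argument.

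For the first estimate, I would work in the horizontal Fourier side for each fixed $x_3$. Since $\widehat{\Deltahh_j f}(\xi_{\rm h},x_3) = \phi(2^{-j}|\xi_{\rm h}|)\widehat{f}(\xi_{\rm h},x_3)$ is supported in the annulus $2^{j-1} \leq |\xi_{\rm h}| \leq 2^{j+1}$, the horizontal heat semigroup acts as multiplication by $e^{-t|\xi_{\rm h}|^2}$, which satisfies $e^{-t|\xi_{\rm h}|^2} \leq e^{-c\,2^{2j}t}$ on this annulus for some absolute $c>0$. Thus, applying Plancherel in $\xi_{\rm h}$ for each fixed $x_3$,
\begin{align}
\|e^{t\Deltah}\Deltahh_j f\|_{L^2_{\xh}(\R^2)}^2
\leq e^{-2c\,2^{2j}t}\|\Deltahh_j f\|_{L^2_{\xh}(\R^2)}^2,
\end{align}
and integrating in $x_3$ gives the desired bound with constant $C=1$.

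For the second estimate, I would pull the supremum out of the sum using the first estimate:
\begin{align}
\sum_{j\in\Z} 2^{sj}\|e^{t\Deltah}\Deltahh_j f\|_{L^2(\R^3)}
\leq C \Bigl(\sum_{j\in\Z} 2^{sj} e^{-c\,2^{2j}t}\Bigr)\sup_{j\in\Z}\|\Deltahh_j f\|_{L^2(\R^3)}.
\end{align}
The sum in parentheses is handled by the change of variable $u_j := t^{1/2}2^{j}$: writing $2^{sj}e^{-c\,2^{2j}t} = t^{-s/2}\,u_j^{s}e^{-cu_j^{2}}$, the function $u\mapsto u^s e^{-cu^2}$ is bounded on $(0,\infty)$ and decays rapidly, so its values along the geometric progression $\{u_j\}_{j\in\Z}$ sum to a constant depending only on $s$ and $c$. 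This yields the factor $Ct^{-s/2}$ and completes the estimate.

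There is no serious obstacle; the only point requiring minor care is ensuring that $e^{t\Deltah}$ is applied on the horizontal Fourier side (so that the one-dimensional $x_3$-direction plays no role in the exponential decay), and that the geometric sum $\sum_{j\in\Z} u_j^s e^{-cu_j^2}$ is uniformly bounded in $t$ — both of which follow from elementary considerations.
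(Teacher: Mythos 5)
Your proposal is correct and follows essentially the same route as the paper: the paper obtains the first estimate by applying the two-dimensional smoothing estimate (citing \cite{Bah-Che-Dan-11}*{Lemma 2.4}, which for $p=2$ is exactly your Plancherel argument on the annulus $2^{j-1}\leq|\xih|\leq 2^{j+1}$) slice-wise in $x_3$ and then taking the $L^2(\mathbb{R}_{x_3})$ norm, and it derives the second estimate from the elementary bound $\sum_{j\in\Z}2^{sj}e^{-c2^{2j}t}\leq Ct^{-s/2}$, which is precisely your geometric-sum computation.
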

\begin{proof}
It follows from \cite{Bah-Che-Dan-11}*{Lemma 2.4} that 
\begin{align}
    \n{e^{t\Deltah}\Deltahh_jf(\cdot,x_3)}_{L^2(\R^2)}
    \leq
    C
    e^{-c2^{2j}t}
    \n{\Deltahh_jf(\cdot,x_3)}_{L^2(\R^2)}.
\end{align}
Taking $L^2(\mathbb{R}_{x_3})$-norm of this, we obtain the first estimate.
The second estimate is immediately obtained by the following elementary estimate:
\begin{align}
    \sum_{j \in \mathbb{Z}}
    2^{sj}
  {   e^{-c2^{2j}t}   }
    \leq C t^{-\frac{s}{2}}.
\end{align}
Thus, we complete the proof.
\end{proof}
Next, we investigate the boundedness of zeroth Fourier multipliers in anisotropic Besov spaces. 
Later, we apply them in particular to the $3$D and $2$D Riesz transforms. 
For a function $M=M(\xi)$ and $M'=M'(\xih)$ on the Fourier space, we define their Fourier multipliers $\mathscr{M}_M$ and $\mathscr{M}_{M'}$ by 
\begin{align}
    \mathscr{M}_Mf
    :=
    \mathscr{F}_{\R^3}^{-1}
    \lp{
    M(\xi)
    \mathscr{F}_{\R^3}[f](\xi)
    }, 
    \quad
    \mathscr{M}_{M'}f
    :=
    \mathscr{F}_{\R^3}^{-1}
    \lp{
    M'(\xih)
    \mathscr{F}_{\R^3}[f](\xi)
    }.
\end{align}
\begin{lemm}\label{lemm:M-bdd}
        Let $M=M(\xi)\in C^{\infty}(\R^3_{\xi}\setminus \{0\})$ and $M'=M'(\xih) \in C^{\infty}(\R^2_{\xih}\setminus \{0\})$ be homogeneous functions of $0$-th order.
        Then, there exists a positive constant $C=C(M,M')$ such that 
        \begin{align}
            \n{\mathscr{M}_{M}f}_{\dot{\mathcal{B}}_{p,q}^{s_1,s_2}(\R^3)}
            \leq{}&
            C\n{f}_{\dot{\mathcal{B}}_{p,q}^{s_1,s_2}(\R^3)},\\
            \n{\mathscr{M}_{M'}f}_{\dot{\mathcal{B}}_{p,q}^{s_1,s_2}(\R^3)}
            \leq{}&
            C\n{f}_{\dot{\mathcal{B}}_{p,q}^{s_1,s_2}(\R^3)}
        \end{align}
        for all $ { s_1,s_2 \in \R}, 1 \leq p,q \leq \infty$ and $f \in \dot{\mathcal{B}}_{p,q}^{s_1,s_2}(\R^3)$.
\end{lemm}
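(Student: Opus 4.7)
The plan is to leverage the fact that every operator in sight is a Fourier multiplier, so $\mathscr{M}_M$ and $\mathscr{M}_{M'}$ commute with $\Deltahh_j\Deltav_k$. It therefore suffices to prove the uniform-in-$(j,k)$ estimates
\begin{align*}
\n{\Deltahh_j\Deltav_k \mathscr{M}_M f}_{L^p(\R^3)}
\leq C \n{\Deltahh_j\Deltav_k f}_{L^p(\R^3)},
\quad
\n{\Deltahh_j\Deltav_k \mathscr{M}_{M'} f}_{L^p(\R^3)}
\leq C \n{\Deltahh_j\Deltav_k f}_{L^p(\R^3)}.
\end{align*}
Multiplying these by $2^{s_1 j}2^{s_2 k}$ and taking the $\ell^q(\Z^2)$ norm yields the Besov bounds in one stroke.

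For the horizontal multiplier $\mathscr{M}_{M'}$ I would fatten the cutoff: choose $\widetilde{\phi}^{\rm h}\in C_c^\infty(\R^2\setminus\{0\})$ with $\widetilde{\phi}^{\rm h}(2^{-j}\xih)=1$ on $\supp\phih_j$ so that, by frequency localization,
\begin{align*}
\Deltahh_j\Deltav_k \mathscr{M}_{M'} f
= K_j *_{\rm h} (\Deltahh_j\Deltav_k f),\qquad
K_j(\xh) := \mathscr{F}^{-1}_{\R^2}\lp{M'(\xih)\widetilde{\phi}^{\rm h}(2^{-j}\xih)}(\xh).
\end{align*}
The dilation $\xih = 2^j\eta_h$ preserves $\n{\cdot}_{L^1(\R^2)}$ and reduces the task to the finiteness of $\n{\mathscr{F}^{-1}_{\R^2}\lp{M'\widetilde{\phi}^{\rm h}}}_{L^1(\R^2)}$, which is immediate since $M'\widetilde{\phi}^{\rm h}\in C_c^\infty(\R^2\setminus\{0\})$. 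Young's inequality applied in the horizontal variable closes this case.

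For $\mathscr{M}_M$ the same scheme goes through with an additional fattened vertical cutoff $\widetilde{\phi}^{\rm v}(2^{-k}\xi_3)$. After the anisotropic dilation $\xih = 2^j\eta_h$, $\xi_3 = 2^k\eta_3$, the question reduces to whether
\begin{align*}
\n{\mathscr{F}^{-1}\lp{G_m}}_{L^1(\R^3)} \leq C,\qquad
G_m(\eta):=M(\eta_h,2^m\eta_3)\,\widetilde{\phi}^{\rm h}(\eta_h)\widetilde{\phi}^{\rm v}(\eta_3),
\end{align*}
holds uniformly in $m:=k-j\in\Z$. Since $\partial^\alpha M$ is homogeneous of degree $-|\alpha|$, on the compact annulus $\{|\eta_h|\sim 1,\,|\eta_3|\sim 1\}$ the chain rule yields
\begin{align*}
\abso{\partial_\eta^\alpha\lp{M(\eta_h,2^m\eta_3)}}
\lesssim 2^{m\alpha_3}(|\eta_h|^2+4^m|\eta_3|^2)^{-|\alpha|/2}
\lesssim 2^{m\alpha_3}\max(1,2^m)^{-|\alpha|}
\leq C_\alpha,
\end{align*}
uniformly in $m\in\Z$ and $\alpha\in(\mathbb{N}\cup\{0\})^3$. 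Hence $\n{G_m}_{C^N(\R^3)}$ is uniformly bounded, and taking $N>3$ together with the classical Fourier-decay estimate $|\mathscr{F}^{-1}[G_m](x)|\lesssim (1+|x|)^{-N}\n{G_m}_{C^N}$ produces the desired uniform $L^1$-bound. Young's inequality again finishes the argument.

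The main obstacle is exactly this uniform $C^N$ control of $G_m$ across both regimes $m\to+\infty$ and $m\to-\infty$: in isolation the chain-rule factor $2^{m\alpha_3}$ would blow up as $m\to+\infty$, but the homogeneity-driven decay of $\partial^\alpha M$ at the stretched point $(\eta_h,2^m\eta_3)$ compensates it exactly on the chosen annular support. Once this cancellation is made transparent, the remaining steps (commutation, dilation, Young's inequality) are entirely routine.
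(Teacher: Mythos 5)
Your proposal is correct, but it takes a genuinely different route from the paper for the $3$D multiplier $\mathscr{M}_M$. The paper observes that the anisotropic block $\Deltahh_j\Deltav_k$ is frequency-supported in an \emph{isotropic} annulus $\{2^{\max\{j,k\}-2}\leq|\xi|\leq 2^{\max\{j,k\}+2}\}$, writes $\Deltahh_j\Deltav_k\mathscr{M}_Mf=\psi^M_{\max\{j,k\}}*\Deltahh_j\Deltav_kf$ with $\psi^M_\ell=\mathscr{F}^{-1}[M(\xi)\sum_{m=-3}^{3}\phi(2^{-(\ell+m)}|\xi|)]$, and exploits the exact isotropic scaling identity $\n{\psi^M_\ell}_{L^1}=\n{\psi^M_0}_{L^1}$ coming from the $0$-homogeneity of $M$ — so a single Schwartz kernel does all the work and no uniformity check is needed. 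You instead perform the anisotropic dilation $\xih=2^j\eta_{\rm h}$, $\xi_3=2^k\eta_3$, which produces the one-parameter family $G_m(\eta)=M(\eta_{\rm h},2^m\eta_3)\widetilde{\phi}^{\rm h}(\eta_{\rm h})\widetilde{\phi}^{\rm v}(\eta_3)$ with $m=k-j$, and you must then prove a uniform-in-$m$ bound on $\n{\mathscr{F}^{-1}[G_m]}_{L^1}$. Your verification of the cancellation $2^{m\alpha_3}\max(1,2^m)^{-|\alpha|}\leq 1$ (namely $2^{-m(\alpha_1+\alpha_2)}\leq 1$ for $m\geq 0$ and $2^{m\alpha_3}\leq 1$ for $m<0$) is the crux and it is correct; note that the support condition $|\eta_{\rm h}|\sim 1$ keeps the argument of $M$ away from the origin, so the homogeneity bound $|(\partial^\alpha M)(\zeta)|\leq C_\alpha|\zeta|^{-|\alpha|}$ applies, and only the finitely many $\alpha$ with $|\alpha|\leq N$ are needed. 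The paper's argument is shorter for this exact statement; yours is somewhat more flexible, since it would extend verbatim to symbols that are not homogeneous but merely satisfy anisotropic Mikhlin-type derivative bounds, and it makes explicit where the uniformity across the two regimes $k\gg j$ and $k\ll j$ comes from. Your treatment of the $2$D multiplier $\mathscr{M}_{M'}$ coincides with the standard argument the paper delegates to the isotropic theory.
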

\begin{proof}
    Since the the $2$D case $M' = M'(\xih)\in C^{\infty}(\R^2_{\xih}\setminus \{0\})$ is easily obtained by following the argument for the usual isotropic case (see \cite{Bah-Che-Dan-11} for instance), 
    we only show the $3$D case $M = M(\xi)\in C^{\infty}(\R^3_{\xi}\setminus \{0\})$.

    In this proof, we use the $3$D isotropic Littlewood--Paley operators given by 
    \begin{align}
        \Delta_{\ell} f := \mathscr{F}_{\mathbb{R}^3}\lp{\phi(2^{-\ell}|\xi|)\mathscr{F}_{\mathbb{R}^3}[f](\xi)}, \qquad \ell \in \mathbb{Z},
    \end{align}
    where $\phi \in C_c^{\infty}([0,\infty))$ satisfying \eqref{phi}.
    Let us define a Schwartz function 
    \begin{align}
        \psi_{\ell}^M(x)
        :=
        \mathscr{F}^{-1}_{\R^3}
        \lp{
        M(\xi)\sum_{m=-3}^3 \phi(2^{-(\ell+m)}|\xi|)
        }(x)
    \end{align}
    for $x \in \R^3$ and $\ell \in \Z$. Since $\psi_{\ell}^M(x) = 2^{3\ell} \psi_{0}^M(2^{\ell}x)$, we have $\n{\psi_{\ell}^M}_{L^1(\R^3)} = \n{\psi_0^M}_{L^1(\R^3)}$
    for all $\ell \in \Z$.
    Since it holds 
    \begin{align}
        &
        \Mp{ \xi \in \mathbb{R}^3 \ ; \ 2^{j-1} \leq |\xih| \leq 2^{j+1} }
        \cap 
        \Mp{ \xi \in \mathbb{R}^3 \ ; \ 2^{k-1} \leq |\xi_3| \leq 2^{k+1} }
        \\
        &\quad
        \subset
        \Mp{ \xi \in \mathbb{R}^3 \ ; \ 2^{\max\{j,k\}-2} \leq |\xi| \leq 2^{\max\{j,k\}+2} }
    \end{align}
    and 
    \begin{align}
        \sum_{m=-3}^3 \phi(2^{-(\max\{j,k\}+m)}|\xi|) = 1
    \end{align}
     for all $\xi \in \mathbb{R}^3$ with $2^{\max\{j,k\}-2} \leq |\xi| \leq 2^{\max\{j,k\}+2} $,
    we have
    \begin{align}
        \Deltahh_j \Deltav_k \mathscr{M}_Mf
        =
        \Deltahh_j \Deltav_k \sum_{m=-3}^3 \Delta_{\max\{j,k\}+m} \mathscr{M}_Mf
        =
        \psi_{\max\{j,k\}}^M * \Deltahh_j \Deltav_kf.
    \end{align}
    Thus, we obtain
    \begin{align}
        \n{\Deltahh_j \Deltav_k \mathscr{M}_Mf }_{L^p(\R^3)}
        = {}&
        \n{\Deltahh_j \Deltav_k \psi_{\max\{j,k\}}^M*f }_{L^p(\R^3)}\\
        \leq {}&
        \n{\psi_0^M}_{L^1(\R^3)}
        \n{\Deltahh_j \Deltav_k f }_{L^p(\R^3)},
    \end{align}
    which completes the proof.
\end{proof}
Finally, we prepare a product estimate in the anisotropic Besov space $\dot{\mathcal{B}}_{2,1}^{1,\frac{1}{2}}(\R^3)$.
\begin{lemm}\label{lemm:para}
    There exists a pure positive constant $C$ such that 
    \begin{align}
        \n{fg}_{\dot{\mathcal{B}}_{2,1}^{1,\frac{1}{2}}(\R^3)}
        \leq 
        C
        \n{f}_{\dot{\mathcal{B}}_{2,1}^{1,\frac{1}{2}}(\R^3)}
        \n{g}_{\dot{\mathcal{B}}_{2,1}^{1,\frac{1}{2}}(\R^3)}
    \end{align}
    for all $f,g \in \dot{\mathcal{B}}_{2,1}^{1,\frac{1}{2}}(\R^3)$.
\end{lemm}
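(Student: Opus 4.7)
The plan is to adapt the classical Bony paraproduct decomposition to the anisotropic Littlewood--Paley setting, performed simultaneously in the horizontal and vertical frequencies. A key preliminary observation is that by the anisotropic Bernstein inequality of Lemma \ref{lemm:Bern},
$$\|\Deltahh_j \Deltav_k f\|_{L^\infty(\R^3)} \leq C\, 2^j 2^{k/2} \|\Deltahh_j \Deltav_k f\|_{L^2(\R^3)},$$
so that summation yields the embedding $\dot{\mathcal{B}}_{2,1}^{1,1/2}(\R^3) \hookrightarrow L^\infty(\R^3)$. This is used repeatedly to place low-frequency factors into $L^\infty$, reflecting the fact that $(s_1,s_2)=(1,1/2)$ is the $L^2$-scaling-critical pair in the two groups of variables $\xh\in\R^2$ and $x_3\in\R$.

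Concretely, I would write $fg = \sum_{j,k,j',k'} (\Deltahh_j \Deltav_k f)(\Deltahh_{j'} \Deltav_{k'} g)$ and partition the quadruple of indices by the sign of $j-j'$ and $k-k'$ (the three cases $j \leq j'-2$, $|j-j'|\leq 1$, $j \geq j'+2$, and analogously for $k,k'$). This produces nine groups. Eight of them are paraproduct-type, in the sense that for at least one of the two frequency variables the two factors have well-separated frequencies, so that the Fourier support of each summand is localized in that variable at a prescribed dyadic scale. For these pieces, one applies $\Deltahh_{j_0}\Deltav_{k_0}$, invokes the Fourier-support argument to collapse the outer summation to at most $O(1)$ indices close to $(j_0,k_0)$, and bounds the low-frequency factor in $L^\infty$ via the embedding above; H\"older's inequality $L^\infty \cdot L^2$ then yields the desired bilinear bound after summing with the weights $2^{j_0} 2^{k_0/2}$.

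The main obstacle is the ninth piece, the double remainder $\mathcal{R}(f,g) = \sum_{|\sigma|,|\tau|\leq 1}\sum_{j,k} \Deltahh_j \Deltav_k f \cdot \Deltahh_{j+\sigma} \Deltav_{k+\tau} g$. Here the Fourier support of each summand is confined only to $\{|\xih|\lesssim 2^{j+2},\,|\xi_3|\lesssim 2^{k+2}\}$, so the output frequency index $(j_0,k_0)$ can be much smaller than $(j,k)$. I would estimate by moving one factor to $L^\infty$ via Bernstein,
$$\|\Deltahh_j \Deltav_k f \cdot \Deltahh_{j+\sigma} \Deltav_{k+\tau} g\|_{L^2(\R^3)} \leq C\, 2^j 2^{k/2} \|\Deltahh_j \Deltav_k f\|_{L^2(\R^3)} \|\Deltahh_{j+\sigma} \Deltav_{k+\tau} g\|_{L^2(\R^3)},$$
and summing the outer weight $2^{j_0}2^{k_0/2}$ over $j_0 \leq j+3,\ k_0 \leq k+3$ produces a geometric series $\sim 2^j 2^{k/2}$. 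Combining this with $\sum_{j,k} a_{j,k} b_{j,k} \leq \|a\|_{\ell^\infty(\Z^2)} \|b\|_{\ell^1(\Z^2)}$ and the embedding $\dot{\mathcal{B}}_{2,1}^{1,1/2}(\R^3) \hookrightarrow \dot{\mathcal{B}}_{2,\infty}^{1,1/2}(\R^3)$ from Lemma \ref{lemm:emb} closes the estimate; the summability index $q=1$ in the target norm is what makes this final step work.
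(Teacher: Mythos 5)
Your argument is correct, but it follows a genuinely different route from the paper. You run a full bi-parameter Bony decomposition (paraproducts and remainders in both the horizontal and the vertical frequency simultaneously, nine groups in all), closing every case with the anisotropic Bernstein inequality $\n{\Deltahh_j\Deltav_k f}_{L^\infty}\leq C2^j2^{k/2}\n{\Deltahh_j\Deltav_k f}_{L^2}$, H\"older $L^\infty\cdot L^2$, and geometric summation using the positivity of the exponents $1$ and $\tfrac12$. The paper instead decomposes only in the vertical variable, writing $fg=T^{\rm v}_fg+R^{\rm v}(f,g)+T^{\rm v}_gf$, and disposes of the horizontal direction in one stroke by invoking the product law $\n{fg}_{\widetilde{L^2}(\R;\dB_{2,1}^{1}(\R^2))}\leq C\n{f}_{\widetilde{L^{\infty}}(\R;\dB_{2,1}^{1}(\R^2))}\n{g}_{\widetilde{L^2}(\R;\dB_{2,1}^{1}(\R^2))}$ in Chemin--Lerner spaces (quoted from the algebra property of $\dB_{p,1}^{2/p}(\R^2)$), then trading $\widetilde{L^\infty}_{x_3}$ for $\widetilde{L^2}_{x_3}$ at the cost of $2^{k'/2}$ and handling the vertical remainder by Young's inequality for sequences. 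The paper's route is shorter because it outsources the horizontal analysis to a cited 2D result; yours is more self-contained and elementary, at the price of more case checking. One expository remark: in the mixed groups (frequencies separated in one variable but comparable in the other) the factor you place in $L^\infty$ is not low in both variables, so the output dyadic scale is pinned only in the separated variable and is merely bounded above in the other; you need there the same geometric summation $\sum_{k_0\leq k+O(1)}2^{k_0/2}\lesssim 2^{k/2}$ that you spell out for the double remainder. Since the exponents $1$ and $\tfrac12$ are both positive, this closes in every case, so the argument is sound as proposed.
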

Before proving Lemma \ref{lemm:para}, we recall the following Chemin--Lerner norm: 
\begin{align}
    \n{f}_{\widetilde{L^r}(\mathbb{R};\dB_{p,q}^{s}(\R^2))}
    :=
    \n{
    \Mp{2^{sj}\n{\Deltahh_j f}_{L^r(\R;L^p(\R^2))}}_{j \in \mathbb{Z}}
    }_{\ell^q(\Z)} 
\end{align}
for $1 \leq p,q,r \leq \infty$ and $s \in \mathbb{R}$.
It is well-known that for $1 \leq p,r,r_1,r_2 \leq \infty$ with $1/r = 1/r_1 + 1/r_2$, it holds 
\begin{align}
    \n{fg}_{\widetilde{L^r}(\mathbb{R};\dB_{p,1}^{\frac{2}{p}}(\R^2))}
    \leq
    C
    \n{f}_{\widetilde{L^{r_1}}(\mathbb{R};\dB_{p,1}^{\frac{2}{p}}(\R^2))}
    \n{g}_{\widetilde{L^{r_2}}(\mathbb{R};\dB_{p,1}^{\frac{2}{p}}(\R^2))}.
\end{align}
We may prove the above inequality by the same strategy as in the proof of the fact that $\dB_{p,1}^{\frac{2}{p}}(\mathbb{R}^2)$ is an algebra; see \cite{Bah-Che-Dan-11}*{Section 2.6} for the details.
Now, we show Lemma \ref{lemm:para}.
\begin{proof}[Proof of Lemma \ref{lemm:para}]
The proof is essentially based on the para-product calculus. 
We apply the Bony decomposition to $fg$ for the vertical direction:
\begin{align}
    fg
    =
    T^{\rm v}_fg
    +
    R^{\rm v}(f,g)
    +
    T^{\rm v}_gf,
\end{align}
where we have set 
\begin{align}
    T^{\rm v}_fg
    :=
    \sum_{k \in \mathbb{Z}}
    \sum_{k' \leq k-3}
    \Deltav_{k'}f \Deltav_k g, \qquad
    R^{\rm v}(f,g)
    :=
    \sum_{k \in \mathbb{Z}}
    \sum_{|k' - k| \leq 2}
    \Deltav_{k'}f \Deltav_k g.
\end{align}
It follows from 
\begin{align}
    \Deltav_k 
    T^{\rm v}_fg
    =
    \Deltav_k
    \sum_{ | k'' - k | \leq 3}
    \sum_{k' \leq k'' -3}
    \Deltav_{k'}f \Deltav_{k''} g
\end{align}
that 
\begin{align}
    &\sum_{j \in \mathbb{Z}}
    2^{j}
    \n{\Deltahh_j\Deltav_k 
    T^{\rm v}_fg}_{L^2(\R^3)}
    \leq{}
    C
    \sum_{ | k'' - k | \leq 3}
    \sum_{k' \leq k'' -3}
    \sum_{j \in \mathbb{Z}}
    2^{j}
    \n{\Deltahh_j\sp{\Deltav_{k'}f \Deltav_{k''} g}}_{L^2(\R^3)}
    \\
    &\quad
    ={}
    C
    \sum_{ | k'' - k | \leq 3}
    \sum_{k' \leq k'' -3}
    \n{\Deltav_{k'}f \Deltav_{k''} g}_{\widetilde{L^2}(\mathbb{R};\dB_{2,1}^1(\R^2))}\\
    &\quad
    \leq 
    {}
    C
    \sum_{ | k'' - k | \leq 3}
    \sum_{k' \leq k'' -3}
    \n{\Deltav_{k'}f }_{\widetilde{L^{\infty}}(\mathbb{R};\dB_{2,1}^1(\R^2))}
    \n{\Deltav_{k''} g}_{\widetilde{L^2}(\mathbb{R};\dB_{2,1}^1(\R^2))} \\
    &\quad\leq 
    {}
    C
    \sum_{ | k'' - k | \leq 3}
    \sum_{k' \leq k'' -3}
    2^{\frac{1}{2}k'}
    \n{\Deltav_{k'}f }_{\widetilde{L^2}(\mathbb{R};\dB_{2,1}^1(\R^2))}
    \n{\Deltav_{k''} g}_{\widetilde{L^2}(\mathbb{R};\dB_{2,1}^1(\R^2))} \\
    &\quad\leq 
    {}
    C
    \n{f }_{\dot{\mathcal{B}}_{2,1}^{1,\frac{1}{2}}(\R^3)}
    \sum_{ | k'' - k | \leq 3}
    \n{\Deltav_{k''} g}_{\widetilde{L^2}(\mathbb{R};\dB_{2,1}^1(\R^2))}.
\end{align}
Multiplying this by $2^{\frac{1}{2}k}$ and then taking $\ell^1(\Z)$-norm, we have
\begin{align}
    \n{T^{\rm v}_fg}_{\dot{\mathcal{B}}_{2,1}^{1,\frac{1}{2}}}
    \leq 
    C
    \n{f }_{\dot{\mathcal{B}}_{2,1}^{1,\frac{1}{2}}(\R^3)}
    \n{g }_{\dot{\mathcal{B}}_{2,1}^{1,\frac{1}{2}}(\R^3)}.
\end{align}
Similarly, we have 
$\n{T^{\rm v}_gf}_{\dot{\mathcal{B}}_{2,1}^{1,\frac{1}{2}}}
\leq 
C
\n{f }_{\dot{\mathcal{B}}_{2,1}^{1,\frac{1}{2}}(\R^3)}
\n{g }_{\dot{\mathcal{B}}_{2,1}^{1,\frac{1}{2}}(\R^3)}$.
Using 
\begin{align}
    \Deltav_k 
    R^{\rm v}(f,g)
    =
    \Deltav_k
    \sum_{k' \geq k -4}
    \sum_{ | k'' - k'  | \leq 2}
    \Deltav_{k'}f \Deltav_{k''} g,
\end{align}
we have 
\begin{align}
    &
    \sum_{j \in \mathbb{Z}}
    2^j
    \n{\Deltahh_j\Deltav_k 
    R^{\rm v}(f,g)}_{L^2(\R^3)}
    \\
    &\quad\leq 
    C
    \sum_{k' \geq k -4}
    \sum_{ | k'' - k'  | \leq 2}
    \n{\Deltav_{k'}f \Deltav_{k''} g}_{\widetilde{L^2}(\mathbb{R};\dB_{2,1}^1(\R^2))}\\
    &\quad
    \leq 
    C
    \sum_{k' \geq k -4}
    \sum_{ | k'' - k'  | \leq 2}
    \n{\Deltav_{k'}f}_{\widetilde{L^{\infty}}(\mathbb{R};\dB_{2,1}^1(\R^2))}
    \n{\Deltav_{k''} g}_{\widetilde{L^2}(\mathbb{R};\dB_{2,1}^1(\R^2))}\\
    &\quad
    \leq 
    C
    \sum_{k' \geq k -4}
    2^{-\frac{1}{2}k'}
    \sum_{ | k'' - k'  | \leq 2}
    2^{\frac{1}{2}k'}
    \n{\Deltav_{k'}f}_{\widetilde{L^2}(\mathbb{R};\dB_{2,1}^1(\R^2))}
    2^{\frac{1}{2}k''}
    \n{\Deltav_{k''} g}_{\widetilde{L^2}(\mathbb{R};\dB_{2,1}^1(\R^2))},
\end{align}
which and the Hausdorff--Young inequality imply
\begin{align}
    &
    \n{
    R^{\rm v}(f,g)}_{\dot{\mathcal{B}}_{2,1}^{1,\frac{1}{2}}(\R^3)}\\
    &\quad
    \leq 
    C
    \sum_{k \in \mathbb{Z}}
    \sum_{k' \geq k -4}
    2^{-\frac{1}{2}(k'-k)}
    \sum_{ | k'' - k'  | \leq 2}
    2^{\frac{1}{2}k'}
    \n{\Deltav_{k'}f}_{\widetilde{L^2}(\mathbb{R};\dB_{2,1}^1(\R^2))}
    2^{\frac{1}{2}k''}
    \n{\Deltav_{k''} g}_{\widetilde{L^2}(\mathbb{R};\dB_{2,1}^1(\R^2))}\\
    &\quad\leq{}
    C
    \sum_{k' \in \mathbb{Z}}
    \sum_{ | k'' - k'  | \leq 2}
    2^{\frac{1}{2}k'}
    \n{\Deltav_{k'}f}_{\widetilde{L^2}(\mathbb{R};\dB_{2,1}^1(\R^2))}
    2^{\frac{1}{2}k''}
    \n{\Deltav_{k''} g}_{\widetilde{L^2}(\mathbb{R};\dB_{2,1}^1(\R^2))}\\
    &\quad
    \leq
    C
    \n{f }_{\dot{\mathcal{B}}_{2,1}^{1,\frac{1}{2}}(\R^3)}
    \n{g }_{\dot{\mathcal{B}}_{2,1}^{1,\frac{1}{2}}(\R^3)}.
\end{align}
Thus, we complete the proof.
\end{proof}

\section{Linear stability: Proof of Theorem \ref{thm:lin}}\label{lin-ana}
In this section we consider the solutions to the following linearized system:
\begin{align}\label{eq:B-lin}
    \begin{cases}
        \partial_t v^{\rm lin} -  \Deltah v^{\rm lin} -  \theta^{\rm lin} e_3 
        +  \nabla p^{\rm lin} = 0, \qquad & t > 0, x \in \mathbb{R}^3,\\
        \partial_t \theta^{\rm lin} -  \Deltah \theta^{\rm lin} +  v^{\rm lin}_3 
         = 0, & t > 0, x \in \mathbb{R}^3,\\
        \nabla \cdot  v^{\rm lin} = 0, & t \geq 0, x \in \mathbb{R}^3,\\
        v^{\rm lin}(0,x)=v_0(x),\quad \theta^{\rm lin}(0,x)=\theta_0(x), & x \in \mathbb{R}^3.
    \end{cases}
\end{align} 
\subsection{Explicit formula for linear solutions}
Applying the Helmholtz projection $\mathbb{P}=\Mp{\delta_{j,k}+\partial_{x_j}\partial_{x_k} (-\Delta)^{-1}}_{1 \leq j,k \leq 3}$ to the first equation of \eqref{eq:B-lin}, we see that $u^{\rm lin}=(v^{\rm lin},\theta^{\rm lin})$ solves  
\begin{align}
    \begin{cases}
        \partial_t u^{\rm lin} - \Deltah u^{\rm lin} + \widetilde{\mathbb{P}} J \widetilde{\mathbb{P}} u^{\rm lin} = 0, & t > 0, x \in \mathbb{R}^3,\\
        \pnabla \cdot u^{\rm lin} = 0, & t \geq 0, x \in \mathbb{R}^3,\\
        u^{\rm lin}(0,x) = u_0 := (v_0,\theta_0), & x \in \mathbb{R}^3,
    \end{cases}
\end{align}
where $\pnabla:=(\nabla,0)=(\partial_{x_1},\partial_{x_2},\partial_{x_3},0)$ and $4 \times 4$ matrices $\widetilde{\mathbb{P}}$ and $J$ are defined as 
\begin{align}
    \widetilde{\mathbb{P}}
    :=
    \sp{
    \begin{array}{c|c}
        \mbox{\Large $\mathbb{P}$} 
        &
        \begin{matrix}
            0 \\ 0 \\ 0
        \end{matrix} \\
        \hline
        \begin{matrix}
            0 & 0 & 0
        \end{matrix} 
        & 1
    \end{array}
    },\qquad
    J
    :=
    \begin{pmatrix}
        0 & 0 & 0 & 0 \\
        0 & 0 & 0 & 0 \\
        0 & 0 & 0 & -1 \\
        0 & 0 & 1 & 0 \\
    \end{pmatrix}.
\end{align}
Then, the linear solution $u^{\rm lin}$ is given by 
\begin{align}\label{u^lin}
    u^{\rm lin}(t)
    =
    e^{t\Deltah}
    e^{-t\widetilde{\mathbb{P}} J \widetilde{\mathbb{P}}}
    u_0.
\end{align}
By \cites{Lee-Tak-17,Tak-19}, the semigroup $e^{-t\widetilde{\mathbb{P}} J \widetilde{\mathbb{P}}}$ is written explicitly as follows:
\begin{align}\label{e^tJ}
    e^{-t\widetilde{\mathbb{P}} J \widetilde{\mathbb{P}}}u_0
    =
    \sum_{\sigma \in \{ 0, \pm \}}
    e^{\sigma i t \frac{|\nablah|}{|\nabla|}}P_{\sigma}u_0,
\end{align}
where $P_{\sigma}u_0=\mathscr{F}^{-1}\lp{\widehat{P_{\sigma}}(\xi)\widehat{u_0}(\xi)}$ are eigenprojections
and the corresponding matrices $\widehat{P_{\sigma}}(\xi)$ are given by
\begin{align}\label{P_s}
    \begin{split}
    \widehat{P_0}(\xi)
    :={}&
    \begin{pmatrix}
        1 - \xi_1^2|\xih|^{-2} & -\xi_1\xi_2|\xih|^{-2} & 0 & 0 \\
        -\xi_1\xi_2|\xih|^{-2} & 1 - \xi_2^2|\xih|^{-2} & 0 & 0 \\
        0 & 0 & 0 & 0 \\
        0 & 0 & 0 & 0 
    \end{pmatrix},
    \\
    \widehat{P_{\pm}}(\xi)
    :={}&
    \frac{1}{2|\xih|^2|\xi|^2}
    \begin{pmatrix}
        -\xi_1^2\xi_3^2 & -\xi_1\xi_2\xi_3^2 & \xi_1\xi_3|\xih|^2 & \pm i\xi_1\xi_3|\xih||\xi|\\
        -\xi_1\xi_2\xi_3^2 & -\xi_2^2\xi_3^2 & \xi_2\xi_3|\xih|^2 & \pm i\xi_2\xi_3|\xih||\xi|\\
        \xi_1\xi_3|\xih|^2 & \xi_2\xi_3|\xih|^2 & -|\xih|^4 & \mp i|\xih|^3|\xi|\\
        \pm i\xi_1\xi_3|\xih||\xi| & \pm i\xi_2\xi_3|\xih||\xi| & \mp i|\xih|^3|\xi| & |\xih|^2|\xi|^2
    \end{pmatrix}.
    \end{split}
\end{align}
Then, from \eqref{u^lin}, \eqref{e^tJ}, and \eqref{P_s}, $v^{\rm lin}$ and $\theta^{\rm lin}$ has the following solution formula:
\begin{align}\label{lin-sol-1}
    \begin{split}
    \mathbb{P}_{\rm h}v_{\rm h}^{\rm lin}(t)
    ={}&
    e^{t\Deltah}\Ph v_{0,{\rm h}},\\
    v_{\rm h}^{\rm lin}(t) - \mathbb{P}_{\rm h}v_{\rm h}^{\rm lin}(t)
    ={}&
    \sum_{\sigma \in \{\pm \}}
    e^{t\Deltah}
    e^{\sigma it \frac{|\nablah|}{|\nabla|}}
    Q_{\sigma}^{\rm vel,h}(v_0,\theta_0),\\
    v^{\rm lin}_3(t)
    ={}&
    \sum_{\sigma \in \{\pm \}}
    e^{t\Deltah}
    e^{\sigma it \frac{|\nablah|}{|\nabla|}}
    Q_{\sigma}^{\rm vel,v}(v_0,\theta_0),\\
    \theta^{\rm lin}(t)
    ={}&
    \sum_{\sigma \in \{\pm \}}
    e^{t\Deltah}
    e^{\sigma it \frac{|\nablah|}{|\nabla|}}
    Q_{\sigma}^{\rm temp}(v_0,\theta_0),
    \end{split}
\end{align}
where $\mathbb{P}_{\rm h}=\Mp{\delta_{j,k}+\partial_{x_j}\partial_{x_k} (-\Deltah)^{-1}}_{1 \leq j,k \leq 2}$ is the $2$D Helmholtz projection and 
\begin{align}
    &
    Q_{\pm}^{\rm vel, h}(v_0,\theta_0)
    :=
    \frac{1}{2}\Mp{-\frac{\nablah}{|\nablah|}
    \frac{\partial_{x_3}^2}{|\nabla|^2}
    \sp{\frac{\nablah}{|\nablah|} \cdot v_{0,{\rm h}}}
    +
    \frac{\nablah}{|\nabla|}
    \frac{\partial_{x_3}}{|\nabla|}
    v_{0,3}
    \pm 
    i
    \frac{\nablah}{|\nablah|}
    \frac{\partial_{x_3}}{|\nabla|}
    \theta_0},\\
    &
    Q_{\pm}^{\rm vel,v}(v_0,\theta_0)
    :=
    -
    \frac{1}{2}
    \Mp{
    \frac{\partial_{x_3}}{|\nabla|}
    \sp{
    \frac{\nablah}{|\nabla|}
    \cdot 
    v_{0,{\rm h}}
    }
    +
    \frac{|\nablah|^2}{|\nabla|^2}
    v_{0,3}
    \mp 
    i
    \frac{\nablah}{|\nabla|}\cdot\frac{\nablah}{|\nablah|}
    \theta_0
    },\\
    &
    Q_{\pm}^{\rm temp}(v_0,\theta_0)
    :=
    \mp 
    \frac{i}{2}
    \Mp{
    \frac{\partial_{x_3}}{|\nabla|}
    \sp{\frac{\nablah}{|\nablah|}
    \cdot v_{0,{\rm h}}}
    -
    \frac{\nablah}{|\nabla|} \cdot \frac{\nablah}{|\nablah|}
    v_{ 0,3 }
    }  
    +
    \frac{1}{2}
    \theta_0.
\end{align}
\subsection{Estimates of semigroups related to the linear solutions}
Based on the solution formula \eqref{lin-sol-1}, we first consider the decay estimate of the horizontal heat kernel $\{e^{t\Deltah} \}_{t>0}$ in $L^2(\R^3)$ and $L^{\infty}(\R^3)$.
\begin{lemm}\label{lemm:heat-kernel}
    There exists a positive constant $C$ such that 
    \begin{align}
        \n{e^{t\Deltah}f}_{L^2(\R^3)}
        \leq{}&
        Ct^{-\frac{1}{2}}
        \n{f}_{\dot{\mathcal{B}}_{1,\infty}^{0,0}(\R^3)}^{\frac{1}{2}}
        \n{f}_{\dot{\mathcal{B}}_{1,\infty}^{0,1}(\R^3)}^{\frac{1}{2}},\\
        \n{e^{t\Deltah}f}_{L^{\infty}(\R^3)}
        \leq{}&
        Ct^{-1}
        \n{f}_{\dot{\mathcal{B}}_{1,\infty}^{0,0}(\R^3)}^{\frac{1}{2}}
        \n{f}_{\dot{\mathcal{B}}_{1,\infty}^{0,2}(\R^3)}^{\frac{1}{2}}
    \end{align}
    for all $t>0$
    and 
    $f \in \dot{\mathcal{B}}_{1,\infty}^{0,0}(\R^3) \cap \dot{\mathcal{B}}_{1,\infty}^{0,2}(\R^3)$.
\end{lemm}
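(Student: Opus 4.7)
The plan is to dyadically decompose $f$ using the anisotropic Littlewood--Paley blocks, estimate each block by combining the $L^2$ heat-kernel decay stated just above the lemma with the anisotropic Bernstein inequality (Lemma \ref{lemm:Bern}), and then summon the interpolation inequality in Lemma \ref{lemm:inter}(i) to absorb the vertical sum.

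For the $L^2$ bound, I would write
\begin{align}
\n{e^{t\Deltah}f}_{L^2(\R^3)}
\leq \sum_{j,k \in \Z} \n{e^{t\Deltah}\Deltahh_j\Deltav_k f}_{L^2(\R^3)}
\leq C\sum_{j,k} e^{-c2^{2j}t} \n{\Deltahh_j\Deltav_k f}_{L^2(\R^3)},
\end{align}
then apply the anisotropic Bernstein estimate $\n{\Deltahh_j\Deltav_k g}_{L^2(\R^3)} \leq C 2^j 2^{k/2}\n{\Deltahh_j\Deltav_k g}_{L^1(\R^3)}$. The double sum decouples because after bounding $\n{\Deltahh_j\Deltav_k f}_{L^1} \leq \sup_{j'}\n{\Deltahh_{j'}\Deltav_k f}_{L^1}$, the $k$-sum becomes $j$-independent. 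The $j$-sum is handled by the elementary bound $\sum_{j\in\Z} 2^j e^{-c2^{2j}t} \leq Ct^{-1/2}$, and the $k$-sum is handled by Lemma \ref{lemm:inter}(i) with $p=1$, $s_1=0$, $s_2=1$, $\vartheta=1/2$ (so $s=1/2$), yielding $\sum_k 2^{k/2}\sup_{j'}\n{\Deltahh_{j'}\Deltav_k f}_{L^1} \leq C\n{f}_{\aB_{1,\infty}^{0,0}}^{1/2}\n{f}_{\aB_{1,\infty}^{0,1}}^{1/2}$.

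For the $L^\infty$ bound I would proceed analogously. Composing the Bernstein embedding $L^\infty \hookleftarrow L^2$ and $L^2 \hookleftarrow L^1$ for each anisotropic block gives
\begin{align}
\n{e^{t\Deltah}\Deltahh_j\Deltav_k f}_{L^\infty(\R^3)}
\leq C\,2^{2j}2^k e^{-c2^{2j}t}\n{\Deltahh_j\Deltav_k f}_{L^1(\R^3)},
\end{align}
after which the $j$-sum uses $\sum_{j} 2^{2j}e^{-c2^{2j}t} \leq Ct^{-1}$ and the $k$-sum uses Lemma \ref{lemm:inter}(i) with $s_1=0$, $s_2=2$, $\vartheta=1/2$ (so $s=1$), producing exactly $\n{f}_{\aB_{1,\infty}^{0,0}}^{1/2}\n{f}_{\aB_{1,\infty}^{0,2}}^{1/2}$.

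There is no real obstacle here; all ingredients are already at hand. The only mild subtlety is ensuring the separation of the $j$- and $k$-summations, which is legitimate once one passes to $\sup_{j'}\n{\Deltahh_{j'}\Deltav_k f}_{L^1}$ before summing in $k$. The geometric-series estimates $\sum_{j}2^{aj}e^{-c2^{2j}t}\leq Ct^{-a/2}$ are standard and follow by splitting the sum at $2^{2j}\sim t^{-1}$.
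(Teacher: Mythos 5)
Your proof is correct and follows essentially the same route as the paper: dyadic decomposition, the anisotropic Bernstein inequality to pass from $L^1$ to $L^2$ (resp. $L^\infty$) on each block, the elementary bound $\sum_j 2^{aj}e^{-c2^{2j}t}\leq Ct^{-a/2}$ for the horizontal sum, and Lemma \ref{lemm:inter}(i) for the vertical sum. No gaps.
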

\begin{proof}
For $L^2$-estimate, we see
\begin{align}
    \n{ e^{t\Deltah}   f }_{L^{2}(\R^3)}
    &  
    \leq 
    C \sum_{j \in \mathbb{Z}}
    e^{ -c2^{2j} t  } 
     \sum_{k\in \mathbb{Z}}
     \n{ \Deltahh_j \Deltav_k   f   }_{L^{2}(\R^3)}   \\  
    & 
    \leq  
    C 
    \sum_{j \in \mathbb{Z}}
    e^{ -c2^{2j} t  }  2^{j}
    \sum_{k\in \mathbb{Z}} 
    2^{\f{1}{2} k } 
    \n{ \Deltahh_j \Deltav_k   f  }_{L^{1}(\R^3)}     
    \\ 
    & 
    \leq  
    C 
    t^{-\frac{1}{2}}
    \sum_{k\in \mathbb{Z}} 
    2^{\f{1}{2} k } 
    \sup_{j \in \mathbb{Z}}
    \n{ \Deltahh_j \Deltav_k   f  }_{L^{1}(\R^3)}     
    \\ 
    & 
    \leq 
    Ct^{-\frac{1}{2}}
    \n{f}_{\dot{\mathcal{B}}_{1,\infty}^{0,0}(\R^3)}^{\frac{1}{2}}
    \n{f}_{\dot{\mathcal{B}}_{1,\infty}^{0,1}(\R^3)}^{\frac{1}{2}}.
\end{align}
For $L^{\infty}$-estimate, we estimate as
\begin{align}
    \n{ e^{t\Deltah}   f }_{L^{\infty}(\R^3)}
    &
    \leq 
    C \sum_{j \in \mathbb{Z}}
    e^{ -c2^{2j} t  } 
     \sum_{k\in \mathbb{Z}}
     \n{ \Deltahh_j \Deltav_k  f   }_{L^{\infty}(\R^3)}   \\  
    & 
    \leq  
    C 
    \sum_{j \in \mathbb{Z}}
    e^{ -c2^{2j} t  }  2^{2j}
    \sum_{k\in \mathbb{Z}} 
    2^{ k } 
    \n{ \Deltahh_j \Deltav_k  f   }_{L^{1}(\R^3)}     
    \\   
    & 
    \leq  
    C 
    t^{-1}
    \sum_{k\in \mathbb{Z}} 
    2^{ k } 
    \sup_{j \in \mathbb{Z}}
    \n{ \Deltahh_j \Deltav_k  f   }_{L^{1}(\R^3)}     
    \\   
    &    
    \leq 
    Ct^{-1}
    \n{f}_{\dot{\mathcal{B}}_{1,\infty}^{0,0}(\R^3)}^{\frac{1}{2}}
    \n{f}_{\dot{\mathcal{B}}_{1,\infty}^{0,2}(\R^3)}^{\frac{1}{2}}.
\end{align}
Thus, we complete the proof.
\end{proof}

Next, we establish a dispersive estimate of the following linear propagator
\begin{align}
e^{ it \frac{|\nablah|}{|\nabla|}}  f(x)
:=
\f{1}{(2\pi)^3}
\int_{\R^3}
e^{i x \cdot \xi + i t \f{|\xih|}{|\xi| }}
\widehat{f}(\xi)  d\xi,
\quad t\in \R, x\in \R^3.
\end{align} 
\begin{lemm}\label{dis-est-pm} 
There exists a positive constant $C$ such that
\begin{align} 
    \n{
    \Deltahh_j\Deltav_k e^{ it\frac{|\nablah|}{|\nabla|}}f}_{L^{\infty}(\mathbb{R}^3)}
    \leq 
    \begin{cases}
    C 
    2^{2j}2^k
    \sp{1+2^{-(k-j)}|t|}^{-\frac{3}{2}}
    \n{\Deltahh_j\Deltav_k f}_{L^1(\R^3)}, & (k > j),\\
    C
    2^{2j}2^k
    \sp{1+2^{2(k-j)}|t|}^{-\frac{3}{2}}
    \n{\Deltahh_j\Deltav_k f}_{L^1(\R^3)}, & (k \leq j)
    \end{cases}
\end{align}
for all $j,k \in \mathbb{Z}$, $t \in \mathbb{R}$, and $f$ provided that the right-hand side is finite.
\end{lemm}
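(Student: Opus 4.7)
The plan is to reduce the claim to a pointwise bound on a convolution kernel and then run a stationary phase argument separately in the two anisotropic regimes $k>j$ and $k\leq j$. By Young's inequality (with a slightly enlarged Littlewood--Paley cutoff absorbed into $\Deltahh_j\Deltav_k f$) it suffices to bound $\|K_{j,k}(t,\cdot)\|_{L^\infty(\R^3)}$, where
\begin{align}
    K_{j,k}(t,z):=(2\pi)^{-3}\int_{\R^3} e^{iz\cdot\xi+it|\xih|/|\xi|}\widetilde\phi(2^{-j}|\xih|)\widetilde\phi(2^{-k}|\xi_3|)\,d\xi
\end{align}
with $\widetilde\phi$ a slight enlargement of $\phi$. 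The change of variables $\xih=2^j\eta_{\rm h}$, $\xi_3=2^k\eta_3$ gives
\begin{align}
    K_{j,k}(t,z)=2^{2j}2^k\int_{\R^3}e^{i\Phi_{j,k}(z,t;\eta)}\widetilde\phi(|\eta_{\rm h}|)\widetilde\phi(|\eta_3|)\,d\eta,
\end{align}
with $\Phi_{j,k}(z,t;\eta)=2^jz_{\rm h}\cdot\eta_{\rm h}+2^kz_3\eta_3+t F_{j,k}(\eta)$ and $F_{j,k}(\eta)=2^j|\eta_{\rm h}|(2^{2j}|\eta_{\rm h}|^2+2^{2k}|\eta_3|^2)^{-1/2}$. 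The trivial bound $|K_{j,k}|\leq C 2^{2j}2^k$ already handles the regime where the claimed decay factor exceeds $1$.

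Next, I would identify the effective time in each case. If $k>j$, setting $\lambda:=2^{j-k}\in(0,1/2]$ gives $F_{j,k}(\eta)=\lambda|\eta_{\rm h}|(\lambda^2|\eta_{\rm h}|^2+|\eta_3|^2)^{-1/2}$, so the amplitude of the phase $tF_{j,k}$ is comparable to $\lambda|t|=2^{-(k-j)}|t|$. If $k\leq j$, setting $\mu:=2^{k-j}\in(0,1]$ and Taylor expanding
\begin{align}
    |\xih|/|\xi|=(1+\mu^2\eta_3^2/|\eta_{\rm h}|^2)^{-1/2}=1-\tfrac{1}{2}\mu^2\eta_3^2/|\eta_{\rm h}|^2+O(\mu^4)
\end{align}
allows us to pull the $\eta$-independent factor $e^{it}$ out of the integral, leaving a residual phase of amplitude $\mu^2|t|=2^{2(k-j)}|t|$. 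In both cases, when this effective time $T$ is at least $1$, one seeks a gain of $T^{-3/2}$.

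To produce $T^{-3/2}$ I would apply three-dimensional stationary phase to the reduced phase function, which is $|\eta_{\rm h}|/|\eta_3|$ in the first case and $\eta_3^2/|\eta_{\rm h}|^2$ in the second. A direct computation at a point of the annular support $\{|\eta_{\rm h}|,|\eta_3|\in[1/4,4]\}$ shows that the Hessian determinant is bounded below, uniformly in the small parameter $\lambda$ (respectively $\mu$); for instance, at $(\eta_1,\eta_2,\eta_3)=(1,0,1)$ one finds $\det D^2(|\eta_{\rm h}|/|\eta_3|)=-1$ and $\det D^2(\eta_3^2/|\eta_{\rm h}|^2)=8$. The apparent degeneracy in the tangential $\eta_{\rm h}$ direction that one might fear from radial symmetry in $\eta_{\rm h}$ is spurious, since the tangential second derivative of any radial-in-$\eta_{\rm h}$ function equals its radial first derivative divided by $|\eta_{\rm h}|$, which is nonzero on the support. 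This non-degeneracy is the essential input of \cite{Lee-Tak-17}*{Lemmas 4.2 and 4.3}, which I would invoke directly after the rescaling above.

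The main obstacle is to verify uniform Hessian non-degeneracy of the rescaled phases across both regimes, and to control the higher order corrections $O(\mu^4)$ in the Taylor expansion of $|\xih|/|\xi|$ (and the analogous perturbations away from $|\eta_{\rm h}|/|\eta_3|$ when $\lambda>0$ is small) so that they do not spoil the bound. Combining the stationary phase gain $T^{-3/2}$ with the trivial bound through $(1+T)^{-3/2}\sim\min(1,T^{-3/2})$ then yields the two claimed estimates, one for each of the cases $k>j$ and $k\leq j$.
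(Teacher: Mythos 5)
Your reduction, rescaling, identification of the effective times $2^{-(k-j)}|t|$ and $2^{2(k-j)}|t|$, and the limit phases $|\eta_{\rm h}|/|\eta_3|$ and $\eta_3^2/|\eta_{\rm h}|^2$ with non-degenerate rank-$3$ Hessians all match the paper's proof exactly, and your Hessian computations are correct. However, the step you defer as ``the main obstacle'' is precisely the nontrivial content of the lemma, and your proposed way of closing it does not work: Lemmas 4.2 and 4.3 of \cite{Lee-Tak-17} treat the \emph{isotropic} localization of the phase $|\xih|/|\xi|$, which is homogeneous of degree zero and hence has Hessian of rank at most $2$ there; those lemmas only produce the $|t|^{-1/2}$-type isotropic gain mentioned in Remark \ref{rem:thm:lin}, not the $|t|^{-3/2}$ gain with constants uniform in $m=k-j$. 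What is actually needed is a quantitative stability statement for stationary phase under small $C^{d+3}$ perturbations of a fixed non-degenerate phase --- this is the paper's Lemma \ref{sta-pha-sta}. With it, one writes $2^{-2m}(1-p_m)=\tfrac12\xi_3^2/|\xih|^2+E^{(1)}_m$ with $\|E^{(1)}_m\|_{C^6}\leq C2^{2m}$ for $m\ll -1$, and $2^m p_m=|\xih|/|\xi_3|+E^{(2)}_m$ with $\|E^{(2)}_m\|_{C^6}\leq C2^{-2m}$ for $m\gg 1$, obtaining $(1+2^{2m}|t|)^{-3/2}$ and $(1+2^{-m}|t|)^{-3/2}$ respectively with constants independent of $m$; the finitely many intermediate $m$ are handled by the non-uniform stationary phase bound. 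Until you either prove such a stability lemma or cite one, the uniformity in $j,k$ --- which is the whole point of the two-case statement --- remains unestablished.
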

In order to investigate the dispersive estimate in Lemma \ref{dis-est-pm}, 
we use the stability for the stationary phase estimates of oscillatory integrals under small perturbations. For the proof, we refer to \cite{FW}. 
\begin{lemm}\label{sta-pha-sta}
Let $d\geq 1$ be an integer. 
Let $\psi\in C_c^{\infty}(\R^d)$ and let $\mathcal{D}$ be a neighborhood of $\supp \psi$. 
Let $\Phi \in C^{\infty}(\mathcal{D};\R)$ satisfy
\begin{align}
{ \rm{rank} } ( \nabla^2\Phi (\xi)    ) \geq k,
\quad 
\xi \in \supp \psi
\end{align}
for some $k \in \{1,...,d  \}$. Then, there exists two positive constants $\varepsilon=\varepsilon(d,\psi,\Phi)$ and $C=C(d,\psi,\Phi)$ such that
\begin{align}
 \left|
\int_{\R^d}
e^{i x \cdot \xi}
e^{i t ( \Phi(\xi) +\Psi(\xi) )   }
\psi(\xi) d\xi 
 \right|
 \leq 
 C 
 (1+|t|)^{ -\f{k}{2}   }
\end{align}
for all $t\in \R,x\in \R^d$ and $\Psi \in C^{\infty}(\mathcal{D};\R)$ satisfying $\n{\Psi}_{ C^{d+3}( \supp \psi )    }\leq \varepsilon$. 
\end{lemm}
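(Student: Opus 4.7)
The plan is to prove Lemma~\ref{sta-pha-sta} by combining a partition-of-unity localization with a quantitative stationary phase estimate on each local piece, showing that all relevant constants are stable under sufficiently small $C^{d+3}$-perturbations of the phase. For $|t|\leq 1$ the bound is trivial from $\n{\psi}_{L^1(\R^d)}$, so it suffices to establish $|I(t,x)|\leq C|t|^{-k/2}$ for $|t|\geq 1$, where $I(t,x)$ denotes the oscillatory integral on the left-hand side.

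First I would localize. At each $\xi_0\in\supp\psi$, the rank hypothesis on $\nabla^2\Phi(\xi_0)$ produces $k$ linearly independent directions along which the Hessian is non-degenerate, so after a linear change of coordinates (depending on $\xi_0$) the top-left $k\times k$ submatrix of $\nabla^2\Phi(\xi_0)$ is invertible. By continuity of $\nabla^2\Phi$ and a $C^2$-smallness assumption on $\Psi$, the corresponding $k\times k$ submatrix of $\nabla^2(\Phi+\Psi)(\xi)$ remains invertible on a small ball $B_{\xi_0}$, with a uniform lower bound on $|\det|$, for every $\Psi$ satisfying $\n{\Psi}_{C^{d+3}(\supp\psi)}\leq \varepsilon$, provided $\varepsilon=\varepsilon(d,\psi,\Phi)$ is chosen small enough. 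Extracting a finite subcover $\{B_{\xi_\alpha}\}$ of $\supp\psi$ and a subordinate partition of unity $\{\chi_\alpha\}$, I decompose $I(t,x)=\sum_\alpha I_\alpha(t,x)$ and treat each piece separately.

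On each piece I would apply the standard quantitative stationary phase estimate in the $k$ active directions. Writing $\xi=(\eta,\zeta)\in\R^k\times\R^{d-k}$ in the adapted coordinates with $\eta$ corresponding to the invertible block, Fubini reduces the task to bounding, for each fixed $\zeta$ and $x$, a $k$-dimensional oscillatory integral in $\eta$ whose phase $\eta\mapsto x\cdot\xi+t(\Phi+\Psi)(\xi)$ has $\eta$-Hessian of determinant bounded below by $c|t|^k$ for some $c>0$ uniform in $\Psi$, $\zeta$, and in the cover. A $k$-dimensional van der Corput / stationary phase lemma then gives the bound $C|t|^{-k/2}$ on the $\eta$-integral, with a constant depending only on an upper bound for finitely many derivatives of the phase (this is what forces the $C^{d+3}$ threshold) and on the lower bound of the Hessian minor. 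Since $\zeta$ ranges over a compact set and the $x$-contribution is linear in $\xi$, integrating in $\zeta$ and summing over the finite index $\alpha$ yields the desired bound $C(1+|t|)^{-k/2}$.

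The main obstacle is the bookkeeping required for uniformity: one must verify that the constants in the quantitative stationary phase estimate depend on the perturbed phase only through its $C^{d+3}$-norm and a strict lower bound on the relevant Hessian minor, both of which are manifestly stable under $C^{d+3}$-small perturbations once $\varepsilon$ is chosen smaller than, say, half of the worst-case Hessian determinant over the finite subcover. The linear parameter $x$ enters neither the Hessian nor any higher derivative of the phase, so in the non-stationary regime where $|\nabla(x\cdot\xi/t+\Phi+\Psi)|$ is bounded below on $\supp\psi$ (which arises when $|x|/|t|$ is large) repeated integration by parts produces the stronger decay $(1+|t|)^{-N}$ for any $N$; this complementary regime must be organized carefully so that it glues to the stationary regime without loss of uniformity in $\Psi$.
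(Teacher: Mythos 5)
Your proposal is correct, but note that there is nothing in the paper to compare it against: the authors do not prove Lemma \ref{sta-pha-sta} in the text, they quote it from the preprint \cite{FW}. Your argument is the classical proof of Littman-type dispersive estimates (a rank-$k$ Hessian on $\supp\psi$ yields $(1+|t|)^{-k/2}$ decay of the oscillatory integral, uniformly in the linear parameter $x$), upgraded to a perturbative statement, and the upgrade is exactly where you place it: since the finite cover $\{B_{\xi_\alpha}\}$, the adapted linear coordinates, and the lower bounds on the $k\times k$ minors are fixed by $\Phi$ alone, a single $\varepsilon=\varepsilon(d,\psi,\Phi)$ keeps every minor determinant above, say, half its unperturbed infimum for all admissible $\Psi$, and the $C^{d+3}$ hypothesis (with $k\leq d$) bounds the finitely many derivatives of the perturbed phase that enter the stationary phase constant. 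The Fubini slicing is also fine: at fixed $\zeta$ the term $x_\zeta\cdot\zeta$ is a unimodular constant, and the $\eta$-Hessian of the phase is $t\,\partial_\eta^2(\Phi+\Psi)$, so the $k$-dimensional bound is uniform in $\zeta$, in $x$, and in $\Psi$.

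The one point that carries the real weight, which you correctly flag but should make explicit in a final write-up, is the precise form of the quantitative stationary phase lemma you invoke: you need a version whose constant depends only on a lower bound for $|\det\partial_\eta^2(\text{phase})|$ and on upper bounds for finitely many ($\leq d+3$) derivatives of the phase on the support of the amplitude, \emph{without} assuming the existence of an interior critical point. Naive formulations assuming a single non-degenerate stationary point inside the support do not apply directly, and the standard fix is the dichotomy you sketch (critical point within $O(1)$ of the support versus non-stationary integration by parts, the latter giving $(1+|t|)^{-N}$ with constants depending only on the same derivative bounds, since the affine term $x\cdot\xi$ contributes nothing to second or higher derivatives). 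Once that lemma is stated with these dependencies, your gluing is automatic and uniform in $\Psi$, and the $|t|\leq 1$ case is trivially absorbed via $\n{\psi}_{L^1(\R^d)}$. So the proof is complete in outline, with no gap beyond this bookkeeping.
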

Making use of Lemma \ref{sta-pha-sta}, we shall prove Lemma \ref{dis-est-pm}.
\begin{proof}[Proof of Lemma \ref{dis-est-pm}]
It holds that
    \begin{align}
    \Deltahh_j\Deltav_k
    e^{it\frac{|\nablah|}{|\nabla|}}f
    =
    I_{j,k}(t,\cdot)*
    \Deltahh_j\Deltav_kf(x),
\end{align}
where 
\begin{align}
    I_{j,k}(t,x)
    :={}
    &
    \frac{1}{(2\pi)^3}
    \int_{\mathbb{R}^3}
    e^{ix\cdot \xi}
    e^{it\frac{|\xih|}{|\xi|}}
    \psi(2^{-j}\xih,2^{-k}\xi_3)
    d\xi\\
    ={}
    &
    \frac{2^{2j}2^k}{(2\pi)^3}
    J_{k-j}(t,2^j \xh,2^k x_3).
\end{align}
Here, we have set
\begin{align}
    \psi(\xi)
    :=
    \widetilde{\phi}(|\xih|)
    \widetilde{\phi}(|\xi_3|),
    \qquad
    \widetilde{\phi}(r)
    :=\phi(2^{-1}r)+\phi(r)+\phi(2r),
\end{align}
and
\begin{align}
    J_m(t,x):=
    \int_{\mathbb{R}^3}
    e^{ix\cdot \xi}
    e^{it p_m(\xi)}
    \psi(\xi)
    d\xi,\qquad
    p_m(\xi)
    :=
    \frac{|\xih|}{\sqrt{|\xih|^2+2^{2m}\xi_3^2}}.
\end{align}
By the Hausdorff--Young inequlity, it suffices to show that there exists a positive constant $C=C( \psi )$ such that for all $t\in \R$
\begin{align}\label{decay-J}
  \n{J_m(t)}_{L^{\infty}(\mathbb{R}^3)}
    \leq 
     \begin{cases}
        C (1+ 2^{-m}|t|)^{-\f{3}{2}} 
        , \quad & m \geq 1,\\
          C (1+ 2^{2m}|t|)^{-\f{3}{2}} 
        , \quad & m \leq 0. \\
    \end{cases}
\end{align}
On the one hand, one easily sees that
\begin{align}
\supp  \psi
    \subset
    \{  
    (\xih,\xi_3)\in \R^3; 
    2^{-2} \leq |\xih| \leq 2^{2},
     2^{-2} \leq |\xi_3| \leq 2^{2}
    \}.
\end{align}
On the other hand, a straightforward calculation gives 
\begin{align}
    \det(\Grad^2 p_m(\xi)) 
    =
    -
    \f{  2^{6m} \xi_3^4       }
    {  ( |\xih|^2+ 2^{2m}\xi_3^2   )^{\f{9}{2}}           |\xih|   }, \quad
    \xi \in \R^3. 
\end{align} 
Thus,
\begin{align}
{ \rm{rank} } ( \nabla^2 p_m(\xi)    ) =3,
\quad 
\xi \in \supp \psi ;
\end{align}
whence Lemma \ref{sta-pha-sta} (with zero-perturbation $\Psi \equiv 0$) ensures that for any $m\in \mathbb{Z}$, there exists a positive constant $C_m=C( \psi,m )$ such that for all $t\in \R$ 
\begin{align}
 \n{J_m(t)}_{L^{\infty}(\mathbb{R}^3)}
    \leq
    C_m 
    (1+|t|)^{-\frac{3}{2}}. 
\end{align}
However, the above argument does not provide the exact behavior of $C_m$ as $m \rightarrow \pm \infty$. To continue, we specify the behavior of $C_m$ based on the stability Lemma \ref{sta-pha-sta}. We begin with the case where $m\ll -1$. Observe that 
\begin{align}
2^{-2m} (1-p_m(\xi)  )
= \f{1}{2} \f{\xi_3^2}{|\xih|^2}
+E_m^{(1) }(\xi)
\end{align}
with 
\begin{align}
 &  \n{  E_m^{(1) }  }_{C^6( \supp  \psi )   } 
    \leq 
    C 2^{2m},  \\
 & \det \nabla^2\left\{ \f{1}{2} \f{\xi_3^2}{|\xih|^2}   \right\}  
 =
 \f{\xi_3^4}{|\xih|^{7}} \neq 0 , \quad 
 \xi \in \supp  \psi. 
\end{align}
Using the stability Lemma \ref{sta-pha-sta}, we see that there exists an integer $m_1=m_1(\psi)\leq 0$ such that
\begin{align}
 |J_m(t,x)|& =
    \left|\int_{\mathbb{R}^3}
    e^{ix\cdot \xi}
    e^{it p_m(\xi)}
    \psi(\xi) 
    d\xi \right| \\
    &= 
    \left| e^{-it}
    \int_{\mathbb{R}^3}
    e^{ix\cdot \xi} 
    e^{i (t2^{2m}) 2^{-2m}(1-p_m(\xi))    }
    \psi(\xi)
    d\xi \right| \\
    & =
     \left| 
    \int_{\mathbb{R}^3}
    e^{ix\cdot \xi} 
    e^{i (t2^{2m}) \left(  \f{1}{2} \f{\xi_3^2}{|\xih|^2} 
+E_m^{(1) }(\xi)   \right)          }
    \psi(\xi)
    d\xi \right|  \\
    & \leq 
    C (1+ 2^{2m} |t| )^{-\f{3}{2}}
\end{align}
for all $m \leq m_1,t\in \R,x\in \R^3$. 
We apply the similar spirit to the case of $m \gg 1$ and it holds 
\begin{align}
 2^{m} p_m(\xi)
=  \f{|\xih|}{|\xi_3|} 
+E_m^{(2) }(\xi)
\end{align}
with
\begin{align}
 &  \n{  E_m^{(2) }  }_{C^6( \supp \psi)   } 
    \leq 
    C 2^{-2m},  \\
 & \det \nabla^2
 \left\{ \f{|\xih|}{|\xi_3|}  \right\}  
 =
 -\f{1}{  |\xih||\xi_3|^5  } \neq 0 , \quad 
 \xi \in \supp \psi.  
\end{align}
Invoking again the stability Lemma \ref{sta-pha-sta}, we infer that there exists an integer $m_2=m_2(\psi)\geq 0$ such that
\begin{align}
 |J_m(t,x)|& =
    \left|\int_{\mathbb{R}^3}
    e^{ix\cdot \xi}
    e^{it p_m(\xi)}
    \psi(\xi)
    d\xi \right| \\
    &= 
    \left|
    \int_{\mathbb{R}^3}
    e^{ix\cdot \xi} 
    e^{i (t 2^{-m} )  2^{m} p_m(\xi)    }
   \psi(\xi)
    d\xi \right| \\
    & =
     \left| 
    \int_{\mathbb{R}^3}
    e^{ix\cdot \xi} 
    e^{i (t2^{-m}) \left(   \f{|\xih|}{|\xi_3|} 
+E_m^{(2) }(\xi)   \right)          }
   \psi(\xi)
    d\xi \right|  \\
    & \leq 
    C (1+ 2^{-m} |t| )^{-\f{3}{2}}
\end{align}
for all $m \geq m_2$, $t\in \R$, and $x\in \R^3$. Combining the two estimates gives \eqref{decay-J} and we complete the proof.
\end{proof}

From the above lemma, we deduce the $L^{\infty}$ and $L^2$ estimate for the semigroup $\{e^{t\Deltah}e^{\pm i t \frac{|\nablah|}{|\nabla|}}\}_{t>0}$, which plays a crucial role in the proof of Theorem \ref{thm:lin}.
\begin{lemm}\label{disp-est-p}
For any $0<\vartheta <1/2$, there exists a positive constant $C=C(\vartheta)$ such that 
\begin{align}
    \n{ e^{t\Deltah}e^{ \pm it \frac{|\nablah|}{|\nabla|}}f}_{L^{\infty}(\R^3)} 
    \leq{}&
    C
    t^{-\frac{7}{4}}
    \n{f}_{\dot{\mathcal{B}}_{1,\infty}^{0,1}(\R^3)}^{\frac{1}{4}}
    \n{f}_{\dot{\mathcal{B}}_{1,\infty}^{0,3}(\R^3)}^{\frac{3}{4}}\\
    &
    +
    Ct^{-(1+2\vartheta) }  
    \n{f}_{\dot{\mathcal{B}}_{1,\infty}^{0,0}(\R^3)}^{2\vartheta}
    \n{f}_{\dot{\mathcal{B}}_{1,\infty}^{0,1}(\R^3)}^{1-2\vartheta},\\
    \n{ e^{t\Deltah}e^{ \pm  it \frac{|\nablah|}{|\nabla|}}     f}_{L^{2}(\R^3)}
    \leq{}&  
    C 
    t^{-\frac{1}{2}}
    \n{f}_{  \dot{\mathcal{B}}_{1,\infty}^{0,0}(\R^3)   }^{\frac{1}{2}}
    \n{f}_{  \dot{\mathcal{B}}_{1,\infty}^{0,1}(\R^3)   }^{\frac{1}{2}}
\end{align}
for all $t>0$ and $f \in \dot{\mathcal{B}}_{1,\infty}^{0,0}(\R^3) \cap \dot{\mathcal{B}}_{1,\infty}^{0,3}(\R^3)$.
\end{lemm}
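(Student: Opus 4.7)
The plan is to reduce both bounds to the block-wise estimates already established: the heat decay $\|e^{t\Deltah}\Deltahh_j f\|_{L^2}\leq Ce^{-c2^{2j}t}\|\Deltahh_jf\|_{L^2}$, the dispersive bound from Lemma~\ref{dis-est-pm}, the anisotropic Bernstein inequality, and the interpolation of Lemma~\ref{lemm:inter}. Throughout, I use that $e^{t\Deltah}$ and $e^{\pm it|\nablah|/|\nabla|}$ are Fourier multipliers, so they commute with each other and with $\Deltahh_j\Deltav_k$.

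\emph{Proof of the $L^2$ bound.} Since the symbol $e^{\pm it|\xih|/|\xi|}$ has modulus one, the semigroup $e^{\pm it|\nablah|/|\nabla|}$ is an $L^2$ isometry. Writing $f=\sum_{j,k}\Deltahh_j\Deltav_k f$, on each block one gets
\begin{align}
\n{e^{t\Deltah}e^{\pm it\frac{|\nablah|}{|\nabla|}}\Deltahh_j\Deltav_k f}_{L^2(\R^3)}
\leq C e^{-c2^{2j}t}\n{\Deltahh_j\Deltav_k f}_{L^2(\R^3)}
\leq C e^{-c2^{2j}t}\,2^{j}2^{k/2}\n{\Deltahh_j\Deltav_k f}_{L^1(\R^3)},
\end{align}
using Bernstein ($L^1\to L^2$). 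Summing first in $j$ with $\sum_j e^{-c2^{2j}t}2^j\leq Ct^{-1/2}$, then in $k$ via Lemma~\ref{lemm:inter} (the identity $1/2=\tfrac12\cdot 0+\tfrac12\cdot 1$) yields exactly the stated $L^2$ bound.

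\emph{Proof of the $L^\infty$ bound.} Splitting $e^{t\Deltah}=e^{(t/2)\Deltah}e^{(t/2)\Deltah}$ and using the heat semigroup's $L^\infty$-boundedness $\|e^{(t/2)\Deltah}\Deltahh_j\cdot\|_{L^\infty\to L^\infty}\leq Ce^{-c2^{2j}t}$ together with Lemma~\ref{dis-est-pm} applied to the remaining factor (with input in $L^1$), I obtain
\begin{align}
\n{e^{t\Deltah}e^{\pm it\frac{|\nablah|}{|\nabla|}}\Deltahh_j\Deltav_k f}_{L^\infty(\R^3)}
\leq C\,e^{-c2^{2j}t}\,2^{2j}2^k\,A_{j,k}(t)\,\n{\Deltahh_j\Deltav_k f}_{L^1(\R^3)},
\end{align}
where $A_{j,k}(t)=(1+2^{-(k-j)}t)^{-3/2}$ when $k>j$ and $(1+2^{2(k-j)}t)^{-3/2}$ when $k\leq j$. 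I then split $\sum_{j,k}$ into these two regimes and optimise the trade-off between the horizontal-heat decay in $j$ and the dispersive decay encoded in $A_{j,k}(t)$.

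In the regime $k>j$, use $(1+x)^{-3/2}\leq x^{-3/2}$, which produces the block bound $e^{-c2^{2j}t}\,2^{j/2}2^{5k/2}t^{-3/2}$. The $j$-sum $\sum_j e^{-c2^{2j}t}2^{j/2}\leq Ct^{-1/4}$ gives total time rate $t^{-7/4}$. For the $k$-sum with weight $2^{5k/2}$, the identity $5/2=\tfrac14\cdot 1+\tfrac34\cdot 3$ lets me apply Lemma~\ref{lemm:inter} between $\aB^{0,1}_{1,\infty}$ and $\aB^{0,3}_{1,\infty}$, producing the first summand.

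In the regime $k\leq j$, use the weaker bound $(1+x)^{-3/2}\leq x^{-\vartheta}$, valid for every $\vartheta\in[0,3/2]$, giving block bound $e^{-c2^{2j}t}\,2^{(2+2\vartheta)j}2^{(1-2\vartheta)k}t^{-\vartheta}$. Summing in $j\geq k$ by $\sum_j e^{-c2^{2j}t}2^{(2+2\vartheta)j}\leq Ct^{-(1+\vartheta)}$ gives the time rate $t^{-(1+2\vartheta)}$. The residual $k$-weight is $1-2\vartheta$; the restriction $\vartheta<1/2$ places this in $(0,1)$, so the identity $1-2\vartheta=2\vartheta\cdot 0+(1-2\vartheta)\cdot 1$ and Lemma~\ref{lemm:inter} (now between $\aB^{0,0}_{1,\infty}$ and $\aB^{0,1}_{1,\infty}$) supply the second summand.

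\emph{Main difficulty.} The balance is delicate: one must pick the dispersive exponent $\alpha$ so that (i) combined with the heat factor $e^{-c2^{2j}t}$ the $j$-sum produces the target power of $t$, and (ii) the leftover $k$-weight falls inside an admissible interpolation range for the two Besov endpoints appearing on the right-hand side. In the $k\leq j$ regime, one cannot simultaneously achieve the optimal $t^{-7/4}$ rate \emph{and} stay within the pair $(\aB^{0,1}_{1,\infty},\aB^{0,3}_{1,\infty})$; this is why a second term with the weaker low-vertical-regularity norm $\aB^{0,0}_{1,\infty}\cap\aB^{0,1}_{1,\infty}$ and rate $t^{-(1+2\vartheta)}$ is needed, and exactly why $\vartheta<1/2$ appears in the statement.
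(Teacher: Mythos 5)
Your proposal is correct and follows essentially the same route as the paper's own proof: the identical block bound $e^{-c2^{2j}t}2^{2j}2^{k}A_{j,k}(t)\n{\Deltahh_j\Deltav_k f}_{L^1}$, the same split into the regimes $k>j$ (using $(1+x)^{-3/2}\leq x^{-3/2}$, yielding $t^{-7/4}$ and the weight $2^{5k/2}$ interpolated between $\dot{\mathcal{B}}^{0,1}_{1,\infty}$ and $\dot{\mathcal{B}}^{0,3}_{1,\infty}$) and $k\leq j$ (using $(1+x)^{-3/2}\leq x^{-\vartheta}$, yielding $t^{-(1+2\vartheta)}$ and the weight $2^{(1-2\vartheta)k}$), and the same Plancherel-plus-Bernstein argument for the $L^2$ bound, with only an immaterial reorganization of the order in which the vertical blocks are introduced there.
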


\begin{proof}
We first consider $L^{\infty}$-estimate.
It follows from Lemma \ref{dis-est-pm} that 
we may decompose 
\begin{align}
    &
    \n{ e^{t\Deltah}e^{ \pm it \frac{|\nablah|}{|\nabla|}}     f}_{L^{\infty}(\R^3)} 
    \leq 
    \sum_{j,k\in \mathbb{Z}}
    \n{ \Deltahh_j\Deltav_ke^{t\Deltah}e^{ \pm  it \frac{|\nablah|}{|\nabla|}}     f}_{L^{\infty}(\R^3)}\\
    & \quad \leq 
    C\sum_{k \geq j+1} 2^{2j}2^{k}e^{-c2^{2j} t } 
    \sp{1+2^{-(k-j)} t}^{-\frac{3}{2}}
    \n{\Deltahh_j\Deltav_k f}_{L^1(\R^3)} \\
    & \qquad +
    C\sum_{k \leq j } 2^{2j}2^{k}e^{-c2^{2j} t } 
     \sp{1+2^{2(k-j)}t}^{-\frac{3}{2}}
    \n{\Deltahh_j\Deltav_k f}_{L^1(\R^3)}\\
    &\quad=:CJ_1(t)+CJ_2(t).
\end{align}
For the estimate of $J_1(t)$, we have 
\begin{align}
    J_1(t)
    \leq{}&
    \sum_{k \geq j+1} 2^{2j}2^{k}e^{-c2^{2j} t } 
    \sp{2^{-(k-j)} t}^{-\frac{3}{2}}
    \n{\Deltahh_j\Deltav_k f}_{L^1(\R^3)}\\
    ={}&
    \sum_{k \geq j+1} 2^{\frac{1}{2}j}2^{\frac{5}{2}k}e^{-c2^{2j} t } 
    t^{-\frac{3}{2}}
    \n{\Deltahh_j\Deltav_k f}_{L^1(\R^3)}\\
    \leq{}&
    C
    t^{-\frac{3}{2}-\frac{1}{4}}
    \sum_{k \in \mathbb{Z}}
    2^{\frac{5}{2}k}
    \sup_{j \in \mathbb{Z}}\n{\Deltahh_j\Deltav_k f}_{L^1(\R^3)}\\
    \leq{}&
    C
    t^{-\frac{3}{2}-\frac{1}{4}}
    \n{f}_{\dot{\mathcal{B}}_{1,\infty}^{0,1}(\R^3)}^{\frac{1}{4}}
    \n{f}_{\dot{\mathcal{B}}_{1,\infty}^{0,3}(\R^3)}^{\frac{3}{4}},
\end{align}
where we have invoked Lemma \ref{lemm:inter}. 
For the other part, we see that for $0<\vartheta<1/2$
\begin{align}
    J_2(t)
    \leq{}&
    \sum_{k \leq j } 2^{2j}2^{k}e^{-c2^{2j} t } 
     \sp{1+2^{2(k-j)}t}^{-\vartheta } 
    \n{\Deltahh_j\Deltav_k f}_{L^1(\R^3)}\\
    \leq{}&
    \sum_{k \leq j } 2^{2j}2^{k}e^{-c2^{2j} t } 
    \sp{2^{2(k-j)}t}^{ -\vartheta }
    \n{\Deltahh_j\Deltav_k f}_{L^1(\R^3)}\\
    ={}& t^{-\vartheta}
    \sum_{k \leq j } 2^{2j(1+\vartheta)}
    e^{-c2^{2j} t }  
    2^{ (1-2\vartheta)k     } 
    \n{\Deltahh_j\Deltav_k f}_{L^1(\R^3)}\\
    \leq{}&
    C t^{-\vartheta} t^{-(1+\vartheta) }  
    \sum_{k \in \mathbb{Z}} 
    2^{ (1-2\vartheta)k     } 
    \sup_{j \in \mathbb{Z}}
    \n{\Deltahh_j\Deltav_k f}_{L^1(\R^3)}\\
    \leq{}&
    C t^{-\vartheta} t^{-(1+\vartheta) }  
    \n{f}_{\dot{\mathcal{B}}_{1,\infty}^{0,0}(\R^3)}^{2\vartheta}
    \n{f}_{\dot{\mathcal{B}}_{1,\infty}^{0,1}(\R^3)}^{1-2\vartheta}.
\end{align}
Combining the above two estimates, we obtain the desired $L^{\infty}$-estimate.

For $L^2$-estimate, we see by the Plancherel theorem that
\begin{align}\label{L^2-L^2}
    \n{ e^{t\Deltah}e^{ \pm  it \frac{|\nablah|}{|\nabla|}}     f}_{L^{2}(\R^3)}
    &  
    \leq
    C
    \n{ e^{t\Deltah} f}_{L^{2}(\R^3)} 
    \\
    &   
    \leq
    C
    \sum_{j \in \mathbb{Z}}
    e^{-c2^{2j}t}
    2^j
    \n{\Deltahh_j f}_{L^1_{\xh} L^2_{x_3}(\R^3) }
    \\
    & 
    \leq  
    C 
    t^{-\frac{1}{2}}
    \sup_{j\in \mathbb{Z}}
    \n{ \Deltahh_j f}_{ L^1_{\xh} L^2_{x_3}(\R^3)      }  
    \\
    &
    \leq 
    C 
    t^{-\frac{1}{2}}
    \sum_{k \in \mathbb{Z}}
    2^{\frac{1}{2}k} 
    \sup_{j \in \mathbb{Z}}
    \n{  \Deltahh_j \Deltav_k f}_{  L^1(\R^3)   }
    \\
    &
    \leq 
    C 
    t^{-\frac{1}{2}}
    \n{f}_{  \dot{\mathcal{B}}_{1,\infty}^{0,0}(\R^3)   }^{\frac{1}{2}}
    \n{f}_{  \dot{\mathcal{B}}_{1,\infty}^{0,1}(\R^3)   }^{\frac{1}{2}}.
\end{align}
Thus, we complete the proof.
\end{proof}
From Lemma \ref{dis-est-pm} and the interpolation inequalities, 
we immediately obtain the following corollary.
\begin{cor}\label{cor:disp}
    There exists a positive constant $C$ such that
    \begin{align}
        \n{e^{t\Deltah}e^{\pm i t \frac{|\nablah|}{|\nabla|}}f}_{L^p(\R^3)}
        \leq
        C
        t^{-(1-\frac{1}{p})}
        t^{-\frac{3}{4}(1-\frac{2}{p})}
        \n{f}_{\dot{\mathcal{B}}_{1,\infty}^{0,0}(\R^3) \cap \dot{\mathcal{B}}_{1,\infty}^{0,3}(\R^3)}
    \end{align}
    for all $t >0$, $2 \leq p \leq \infty$, and $f \in \dot{\mathcal{B}}_{1,\infty}^{0,0}(\R^3) \cap \dot{\mathcal{B}}_{1,\infty}^{0,3}(\R^3)$.
\end{cor}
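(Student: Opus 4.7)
The plan is to derive Corollary \ref{cor:disp} by interpolating between the $L^2$ and $L^\infty$ endpoint estimates that are already established in Lemma \ref{disp-est-p}, reducing the mixed intermediate Besov norms on the right-hand sides to the intersection norm $\dot{\mathcal{B}}_{1,\infty}^{0,0}(\R^3)\cap \dot{\mathcal{B}}_{1,\infty}^{0,3}(\R^3)$.

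First I would fix the choice $\vartheta = 3/8$ in the $L^{\infty}$-estimate of Lemma \ref{disp-est-p}. With this value one has $1+2\vartheta=7/4$, so both summands on the right-hand side have the same temporal decay $t^{-7/4}$, matching the target exponent at $p=\infty$. Next I would upgrade the mixed Besov norms $\|f\|_{\dot{\mathcal{B}}_{1,\infty}^{0,1}(\R^3)}$ appearing in both endpoint bounds to the intersection norm: by the interpolation inequality in Lemma \ref{lemm:inter} one has
\begin{equation}
\n{f}_{\dot{\mathcal{B}}_{1,\infty}^{0,1}(\R^3)}
\leq
\n{f}_{\dot{\mathcal{B}}_{1,\infty}^{0,0}(\R^3)}^{2/3}
\n{f}_{\dot{\mathcal{B}}_{1,\infty}^{0,3}(\R^3)}^{1/3},
\end{equation}
and Young's inequality then yields $\|f\|_{\dot{\mathcal{B}}_{1,\infty}^{0,1}(\R^3)} \leq \|f\|_{\dot{\mathcal{B}}_{1,\infty}^{0,0}(\R^3)\cap \dot{\mathcal{B}}_{1,\infty}^{0,3}(\R^3)}$. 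The same bound holds trivially for $\|f\|_{\dot{\mathcal{B}}_{1,\infty}^{0,0}(\R^3)}$. Applying these to the two estimates in Lemma \ref{disp-est-p} gives the clean endpoint bounds
\begin{align}
\n{e^{t\Deltah}e^{\pm it\frac{|\nablah|}{|\nabla|}}f}_{L^2(\R^3)}
&\leq C t^{-1/2}\n{f}_{\dot{\mathcal{B}}_{1,\infty}^{0,0}(\R^3)\cap \dot{\mathcal{B}}_{1,\infty}^{0,3}(\R^3)},\\
\n{e^{t\Deltah}e^{\pm it\frac{|\nablah|}{|\nabla|}}f}_{L^{\infty}(\R^3)}
&\leq C t^{-7/4}\n{f}_{\dot{\mathcal{B}}_{1,\infty}^{0,0}(\R^3)\cap \dot{\mathcal{B}}_{1,\infty}^{0,3}(\R^3)}.
\end{align}

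Finally, I would invoke the log-convexity of $L^p$ norms, $\|g\|_{L^p(\R^3)} \leq \|g\|_{L^2(\R^3)}^{2/p}\|g\|_{L^{\infty}(\R^3)}^{1-2/p}$ for $2 \leq p \leq \infty$, and a short arithmetic check
\begin{equation}
\frac{1}{2}\cdot \frac{2}{p} + \frac{7}{4}\Bigl(1-\frac{2}{p}\Bigr) = \frac{7}{4}-\frac{5}{2p} = \Bigl(1-\frac{1}{p}\Bigr) + \frac{3}{4}\Bigl(1-\frac{2}{p}\Bigr)
\end{equation}
shows that the resulting decay rate is exactly the desired $t^{-(1-1/p)-3(1-2/p)/4}$.

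There is no real obstacle: the only subtle step is the choice of $\vartheta=3/8$ in Lemma \ref{disp-est-p}, which is what equalizes the two summands in the $L^{\infty}$ endpoint. Everything else is routine interpolation and bookkeeping.
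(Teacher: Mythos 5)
Your proposal is correct and is essentially the argument the paper intends: the paper states only that the corollary follows ``from Lemma \ref{dis-est-pm} and the interpolation inequalities,'' and your route---fixing $\vartheta=3/8$ in Lemma \ref{disp-est-p} so that both terms of the $L^{\infty}$ bound decay like $t^{-7/4}$, reducing the intermediate Besov norms to the intersection norm via Lemma \ref{lemm:inter}, and then interpolating between $L^2$ and $L^{\infty}$---is exactly the natural way to carry that out. The exponent check $\tfrac{1}{2}\cdot\tfrac{2}{p}+\tfrac{7}{4}\bigl(1-\tfrac{2}{p}\bigr)=\bigl(1-\tfrac{1}{p}\bigr)+\tfrac{3}{4}\bigl(1-\tfrac{2}{p}\bigr)$ is right, so nothing is missing.
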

\subsection{Proof of Theorem \ref{thm:lin}}
Now, we are ready to prove Theorem \ref{thm:lin}.
\begin{proof}[Proof of Theorem \ref{thm:lin}]
We notice that since it holds
\begin{align}
    \n{(\Ph \vh^{\rm lin},\vh^{\rm lin} - \Ph \vh^{\rm lin},v_3^{\rm lin},\theta^{\rm lin})(t)}_{L^p(\R^3)}
    &
    \leq{}
    C
    \n{(v^{\rm lin},\theta^{\rm lin})(t)}_{H^2(\R^3)}
    \\
    &={}
    C
    \n{(v_0,\theta_0)}_{H^2(\R^2)}.
\end{align}
Thus, we only focus on the case of $t>1$.

First, we focus on the estimate of $\n{\Ph\vh ^{\rm lin}(t)}_{L^p(\R^3)}$, which equals to $\n{e^{t\Deltah} \Ph v_{0,\rm h}}_{L^p(\R^3)}$. 
{It follows from Lemma \ref{lemm:heat-kernel} that}
\begin{align}
    \n{ e^{t\Deltah}   \Ph v_{0,\rm h} }_{L^{2}(\R^3)}
    \leq 
    C 
    t^{-\frac{1}{2}}
    \n{v_{0,\rm h}}_{  \dot{\mathcal{B}}_{1,\infty}^{0,0}(\R^3)   }^{\frac{1}{2}}
    \n{v_{0,\rm h}}_{  \dot{\mathcal{B}}_{1,\infty}^{0,1}(\R^3)   }^{\frac{1}{2}}   
    \leq 
      C 
    t^{-\frac{1}{2}}
    \n{v_0}_{ L^1_{\xh}W^{1,1}_{x_3}(\R^3)  }, \\
    \n{ e^{t\Deltah}   \Ph v_{0,\rm h} }_{L^{\infty}(\R^3)}
    \leq 
    C 
    t^{-1}   
    \n{ v_{0,\rm h}}_{  \dot{\mathcal{B}}_{1,\infty}^{0,0}(\R^3)   }^{\frac{1}{2}}
    \n{ v_{0,\rm h}}_{  \dot{\mathcal{B}}_{1,\infty}^{0,2}(\R^3)   }^{\frac{1}{2}}  
    \leq 
      C 
    t^{-1}
    \n{v_0}_{ L^1_{\xh}W^{2,1}_{x_3}(\R^3)  } .
\end{align}
Interpolating the above estimates gives rise to 
\begin{align}
    \n{\Ph \vh^{\rm lin}(t)}_{L^p(\R^3)}
    \leq 
    C
    t^{ -(1-\frac{1}{p})} 
    \n{(v_0,\theta_0)}_{L^1_{\xh}W^{2,1}_{x_3}(\R^3)  } 
\end{align}
for all $2\leq p \leq \infty$ and $t>0$.

For the estimate of  { $\vh^{\rm lin}(t)-\Ph \vh^{\rm lin} (t)$} and { $\theta^{\rm lin}(t)$},
we have by Corollary \ref{cor:disp} that
\begin{align}
    &
    \n{\vh^{\rm lin}(t)-\Ph \vh^{\rm lin} (t)}_{L^p(\R^3)}
    +
    \n{\theta^{\rm lin} (t)}_{L^p(\R^3)}
    \\
    &\quad
    \leq{}
    C
    t^{-(1-\frac{1}{p})}
    t^{-\frac{3}{4}(1-\frac{2}{p})}
    \sum_{\sigma \in \{\pm\}}
    \n{
    \sp{
    Q_{\sigma}^{\rm vel,h}(v_0,\theta_0),
    Q_{\sigma}^{\rm temp}(v_0,\theta_0)
    }
    }_{\dot{\mathcal{B}}_{1,\infty}^{0,0}(\R^3) \cap \dot{\mathcal{B}}_{1,\infty}^{0,3}(\R^3)}
    \\
    &\quad
    \leq{}
    C
    t^{-(1-\frac{1}{p})}
    t^{-\frac{3}{4}(1-\frac{2}{p})}
    \n{(v_0,\theta_0)}_{L^1_{\xh}W^{3,1}_{x_3}(\R^3)}.
\end{align}

We need to modify the above argument when we consider the decay estimate of $\n{v^{\rm lin}_3(t)}_{L^p(\R^3)}$ by taking account of the enhanced dissipation. 
We first notice that  
\begin{align}\label{vel-v}
    \partial_{x_3}
    Q_{\pm}^{\rm vel,v}(v_0,\theta_0)
    =
    |\nablah|
    \widetilde{Q_{\pm}^{\rm vel,v}}(v_0,\theta_0),
\end{align}
where
\begin{align}
    \widetilde{Q_{\pm}^{\rm vel,v}}(v_0,\theta_0)
    :=
    -
    \frac{1}{2}
    \Mp{
    \frac{\partial_{x_3}^2}{|\nabla|^2}
    \sp{
    \frac{\nablah}{|\nablah|}
    \cdot 
    v_{0,{\rm h}}
    }
    -
    \frac{\partial_{x_3}}{|\nabla|}
    \frac{\nablah}{|\nabla|}
    \cdot 
    \frac{\nablah}{|\nablah|}
    v_{0,3}
    \pm 
    i
    \frac{\partial_{x_3}}{|\nabla|}
    \theta_0
    }.
\end{align}
For $L^{\infty}$-estimate, we deduce from Lemma \ref{dis-est-pm} that 
\begin{align}
    &
    \n{v_3^{\rm lin}(t)}_{L^{\infty}(\R^3)}
    \leq
    \sum_{\sigma \in \{\pm\}}
    \n{ e^{\frac{t}{2}\Deltah}e^{ it \frac{|\nablah|}{|\nabla|}}     
    \sp{e^{\frac{t}{2}\Deltah}Q_{\sigma}^{\rm vel,v}(v_0,\theta_0)}}_{L^{\infty}(\R^3)} 
    \\
    &
    \leq 
    \sum_{\sigma \in \{\pm \}}
    \sum_{j,k\in \mathbb{Z}}
    \n{ e^{\frac{t}{2}\Deltah}e^{ it \frac{|\nablah|}{|\nabla|}}   
    \Deltahh_j \Deltav_k 
    \sp{e^{\frac{t}{2}\Deltah}Q_{\sigma}^{\rm vel,v}(v_0,\theta_0)}}_{L^{\infty}(\R^3)}\\
    & \leq 
    C
    \sum_{\sigma \in \{\pm \}}
    \sum_{k \geq j+1} 2^{2j}e^{-c2^{2j} t } 
    \sp{1+2^{-(k-j)} t}^{-\frac{3}{2}}
    \n{\Deltahh_j \Deltav_k e^{\frac{t}{2}\Deltah}\partial_{x_3} Q_{\sigma}^{\rm vel,v}(v_0,\theta_0)}_{L^1(\R^3)} \\
    & \quad +
    C
    \sum_{\sigma \in \{\pm \}}
    \sum_{k \leq j } 2^{2j}2^{k}e^{-c2^{2j} t } 
    \sp{1+2^{2(k-j)}t}^{-\frac{3}{2}}
    \n{\Deltahh_j\Deltav_k e^{\frac{t}{2}\Deltah} Q_{\sigma}^{\rm vel,v}(v_0,\theta_0)}_{L^1(\R^3)}\\
    &=:CJ_1(t)+CJ_2(t).
\end{align}
For the estimate of $J_1(t)$, we have 
\begin{align}
    J_1(t)
    \leq{}&
    \sum_{k \geq j+1} 2^{2j}e^{-c2^{2j} t } 
    \sp{2^{-(k-j)} t}^{-\frac{3}{2}}
    \n{\Deltahh_j\Deltav_k 
    e^{\frac{t}{2}\Deltah}
    \partial_{x_3}
    Q_{\sigma}^{\rm vel,v}(v_0,\theta_0)}_{L^1(\R^3)}\\
    ={}&
    \sum_{k \geq j+1} 2^{\frac{1}{2}j}2^{\frac{5}{2}k}e^{-c2^{2j} t } 
    t^{-\frac{3}{2}}
    \n{\Deltahh_j\Deltav_k 
    e^{\frac{t}{2}\Deltah}|\nablah| \widetilde{Q_{\sigma}^{\rm vel,v}}(v_0,\theta_0)}_{L^1(\R^3)}\\
    \leq{}&
    C
    t^{-\frac{3}{2}-\frac{1}{4}-\frac{1}{2}}
    \sum_{k \in \mathbb{Z}}
    2^{\frac{3}{2}k}
    \sup_{j \in \mathbb{Z}}\n{\Deltahh_j\Deltav_k \widetilde{Q_{\sigma}^{\rm vel,v}}(v_0,\theta_0)}_{L^1(\R^3)}\\
    \leq{}&
    C
    t^{-\frac{9}{4}}
    \n{(v_0,\theta_0)}_{\dot{\mathcal{B}}_{1,\infty}^{0,1}(\R^3)}^{\frac{1}{2}}
    \n{(v_0,\theta_0)}_{\dot{\mathcal{B}}_{1,\infty}^{0,2}(\R^3)}^{\frac{1}{2}}  \\
      \leq{}& 
      C 
    t^{-\frac{9}{4}}
     \n{(v_0,\theta_0)}_{  L^1_{\xh}W^{2,1}_{x_3}(\R^3)   } .  
\end{align}
For the other part, we see that for $0<\vartheta<1/2$
\begin{align}
    &
    J_2(t)
    \leq{}
    \sum_{k \leq j } 2^{2j}2^{k}e^{-c2^{2j} t } 
     \sp{1+2^{2(k-j)}t}^{-\vartheta } 
    \n{\Deltahh_j\Deltav_k e^{\frac{t}{2}\Deltah} Q_{\sigma}^{\rm vel,v}(v_0,\theta_0)}_{L^1(\R^3)}\\
    &\leq{}
    \sum_{k \leq j } 2^{2j}2^{k}e^{-c2^{2j} t } 
    \sp{2^{2(k-j)}t}^{ -\vartheta }
    \n{\Deltahh_j\Deltav_k e^{\frac{t}{2}\Deltah} Q_{\sigma}^{\rm vel,v}(v_0,\theta_0)}_{L^1(\R^3)}\\
    &={} t^{-\vartheta}
    \sum_{k \leq j } 2^{2j(1+\vartheta)}
    e^{-c2^{2j} t }  
    2^{ (1-2\vartheta)k     } 
    \n{\Deltahh_j\Deltav_k e^{\frac{t}{2}\Deltah} Q_{\sigma}^{\rm vel,v}(v_0,\theta_0)}_{L^1(\R^3)}\\
    &\leq{}
    C t^{-\vartheta} t^{-(1+\vartheta) }  
    \sum_{k \in \mathbb{Z}} 
    2^{ (1-2\vartheta)k     } 
    \sup_{j \in \mathbb{Z}}
    \n{\Deltahh_j\Deltav_k e^{\frac{t}{2}\Deltah} Q_{\sigma}^{\rm vel,v}(v_0,\theta_0)}_{L^1(\R^3)}\\
    &\leq{}
    C t^{-\vartheta} t^{-(1+\vartheta) }
    \n{ Q_{\sigma}^{\rm vel,v}(v_0,\theta_0)}_{\dot{\mathcal{B}}_{1,\infty}^{0,0}(\R^3)}^{2\vartheta}
    \n{ e^{\frac{t}{2}\Deltah} \partial_{x_3} Q_{\sigma}^{\rm vel,v}(v_0,\theta_0)}_{\dot{\mathcal{B}}_{1,\infty}^{0,0}(\R^3)}^{1-2\vartheta}\\
    &\leq{}
    C t^{-\vartheta} t^{-(1+\vartheta) }
    \n{(v_0,\theta_0)}_{L^1(\R^3)}^{2\vartheta}
    \n{e^{\frac{t}{2}\Deltah} |\nablah| \widetilde{Q_{\sigma}^{\rm vel,v}}(v_0,\theta_0)}_{\dot{\mathcal{B}}_{1,\infty}^{0,0}(\R^3)}^{1-2\vartheta}\\
    &\leq{}
    C t^{-\vartheta} t^{-(1+\vartheta) } t^{-\frac{1}{2}(1-2\vartheta)} 
    \n{(v_0,\theta_0)}_{L^1(\R^3)}.
\end{align}
By choosing $\vartheta=1/2-\ep$ with $0<\ep <1/4$, it holds that
\begin{align}\label{v3-infty}
    \n{v_3^{\rm lin}(t)}_{L^{\infty}(\R^3)}
    \leq{}&
    C_{\ep}
    \left(  t^{-\frac{9}{4}} +t^{-2+\ep} \right)
    \n{(v_0,\theta_0)}_{  L^1_{\xh}W^{3,1}_{x_3}(\R^3)   }\\
    \leq{}&
    C_{\ep}
    t^{-2+\ep} 
    \n{(v_0,\theta_0)}_{  L^1_{\xh}W^{3,1}_{x_3}(\R^3)   }.
\end{align}
For the $L^2$-estimate, we see that
\begin{align}  
    \n{v_3^{\rm lin}(t)}_{L^2(\R^3)}  &
    \leq{}
    \sum_{\sigma \in \{\pm \}}
    \n{
    e^{\frac{t}{2}\Deltah}
    \sp{
    e^{\frac{t}{2}\Deltah}
    Q_{\sigma}^{\rm vel,v}(v_0,\theta_0)
    }
    }_{L^2 (\R^3) }\\
    &\leq{}
    C
    \sum_{\sigma \in \{\pm \}}
    \sum_{j \in \mathbb{Z}}
    e^{-c2^{2j}t}
    \sum_{k \in \mathbb{Z}}
    \n{
    \Deltahh_j
    \Deltav_k
    e^{\frac{t}{2}\Deltah}
    Q_{\sigma}^{\rm vel,v}(v_0,\theta_0)
    }_{L^2 (\R^3) }\\
    &\leq{}
    C
    \sum_{\sigma \in \{\pm \}}
    \sum_{j \in \mathbb{Z}}
    e^{-c2^{2j}t}
    2^{j}
    \sum_{k \in \mathbb{Z}}
    2^{\frac{1}{2}k}
    \n{
    \Deltahh_j
    \Deltav_k
    e^{\frac{t}{2}\Deltah}
    Q_{\sigma}^{\rm vel,v}(v_0,\theta_0)
    }_{L^1 (\R^3) }\\
    &\leq{}
    C
    t^{-\frac{1}{2}}
    \sum_{\sigma \in \{\pm \}}
    \sup_{j \in \mathbb{Z}}
    \sum_{k \in \mathbb{Z}}
    2^{\frac{1}{2}k}
    \n{
    \Deltahh_j
    \Deltav_k
    e^{\frac{t}{2}\Deltah}
    Q_{\sigma}^{\rm vel,v}(v_0,\theta_0)
    }_{L^1 (\R^3)  }\\
    &\leq{}
    C
    t^{-\frac{1}{2}}
    \sum_{\sigma \in \{\pm \}}
    \n{
    e^{\frac{t}{2}\Deltah}
    Q_{\sigma}^{\rm vel,v}(v_0,\theta_0)
    }_{\dot{\mathcal{B}}_{1,\infty}^{0,0} (\R^3) }^{\frac{1}{2}}  \label{lin-v3-L^2}      \\
    & \qquad  \qquad \qquad \qquad 
    \times  
    \n{
    e^{\frac{t}{2}\Deltah}
    \partial_{x_3}Q_{\sigma}^{\rm vel,v}(v_0,\theta_0)
    }_{\dot{\mathcal{B}}_{1,\infty}^{0,0} (\R^3)  }^{\frac{1}{2}}\\
    &={}
    C
    t^{-\frac{1}{2}}
    \sum_{\sigma \in \{\pm \}}
    \n{
    e^{\frac{t}{2}\Deltah}
    Q_{\sigma}^{\rm vel,v}(v_0,\theta_0)
    }_{\dot{\mathcal{B}}_{1,\infty}^{0,0} (\R^3) }^{\frac{1}{2}} \\
    & \qquad  \qquad \qquad \qquad 
    \times
    \n{ 
    e^{\frac{t}{2}\Deltah}
    |\nablah|\widetilde{Q_{\sigma}^{\rm vel,v}}(v_0,\theta_0)
    }_{ \dot{\mathcal{B}}_{1,\infty}^{0,0} (\R^3) }^{\frac{1}{2}}\\
    &\leq{}
    C
    t^{-\frac{1}{2}}
    t^{-\frac{1}{4}}
    \n{
    (v_0,\theta_0)
    }_{L^1 (\R^3) }
    =
    C
    t^{-\frac{3}{4}}
    \n{
    (v_0,\theta_0)
    }_{L^1 (\R^3) }.
\end{align}
Interpolating between \eqref{v3-infty} and \eqref{lin-v3-L^2}, we obtain the decay estimate of $\n{v_3(t)}_{L^p(\R^3)}$ for all $2\leq p \leq \infty$.  
We notice that the decay estimates for the first order derivatives of solutions follow in a similar manner. 
Thus, we complete the proof of Theorem \ref{thm:lin}. 
\end{proof}

\section{Nonlinear stability: Proof of Theorem \ref{main-thm}}\label{sec:non}

\subsection{Decomposition of nonlinear terms}
In this subsection, we decompose the nonlinear Duhamel integral of the solution $(v,\theta)$ to \eqref{eq:B2}.
From \eqref{lin-sol-1} and the Duhamel principle, we see that
\begin{align}
    &\Ph \vh (t)   
    ={}
  { \Ph }
  \vh^{\rm lin} (t) 
    - 
    \int_0^t
    e^{(t-\tau)\Deltah} 
    \Ph \nabla \cdot (\vh \otimes v)(\tau) d\tau,\\
    &
    \begin{aligned}
    (\vh-\Ph \vh)(t)
    ={}&
    (\vh^{\rm lin} - \Ph \vh^{\rm lin})(t)
    \\ 
    &-
    \sum_{\sigma \in \{ \pm \} }
    \int_0^t
    e^{(t-\tau)\Deltah}
    e^{\sigma i(t-\tau) \frac{|\nablah|}{|\nabla|}}
    Q_{\sigma}^{\rm vel,h}
    ((v \cdot \nabla)v,v \cdot \nabla \theta)(\tau)d\tau,
    \end{aligned}\\
    &
    v_3(t)
    ={}
    v_3^{\rm lin}(t)
    -
    \sum_{\sigma \in \{ \pm \} }
    \int_0^t
    e^{(t-\tau)\Deltah}
    e^{\sigma i(t-\tau) \frac{|\nablah|}{|\nabla|}}
    Q_{\sigma}^{\rm vel,v}
    ((v \cdot \nabla)v,v \cdot \nabla \theta)(\tau)d\tau,\\
    &
    \theta(t)
    ={}
    \theta^{\rm lin}(t)
    -
    \sum_{\sigma \in \{ \pm \} }
    \int_0^t
    e^{(t-\tau)\Deltah}
    e^{\sigma i(t-\tau) \frac{|\nablah|}{|\nabla|}}
    Q_{\sigma}^{\rm temp}
    ((v \cdot \nabla)v,v \cdot \nabla \theta)(\tau)d\tau.
\end{align}
Here, using  
\begin{align}
    &
    (v\cdot \Grad) v= \Grad \cdot (v \otimes v)
    =
    \begin{pmatrix}
       \nablah \cdot (\vh \otimes \vh)+ \p_{x_3}(\vh v_3)   \\
        \nablah \cdot (v_3 \vh)+ \p_{x_3}(v_3 v_3)
    \end{pmatrix},\\
    &
    v\cdot \Grad \theta = \Grad \cdot (v\theta)
    = \nablah \cdot (\vh\theta) + \partial_{x_3}(v_3\theta),
\end{align}
we may decompose the nonlinear terms as 
\begin{align}
    \int_0^t
    e^{(t-\tau)\Deltah} 
    \Ph \nabla \cdot (\vh \otimes v)(\tau) d\tau
    ={}&
    \int_0^t
    e^{(t-\tau)\Deltah} 
    \Ph \nablah \cdot (\vh \otimes \vh)(\tau) d\tau
    \\
    &+
    \int_0^t
    e^{(t-\tau)\Deltah} 
    \Ph \partial_{x_3} (\vh v_3)(\tau) d\tau\\
    =:{}&
    \sum_{j=1}^2
    \mathcal{D}_j[v](t),
\end{align}
\begin{align}
    &\int_0^t
    e^{(t-\tau)\Deltah}
    e^{\pm i (t-\tau)\frac{|\nablah|}{|\nabla|}}
    Q_{\pm}^{\rm vel,h}
    ((v \cdot \nabla)v,v \cdot \nabla \theta)(\tau)d\tau
    \\
    &\quad=
    -
    \frac{1}{2}
    \int_0^t
    e^{(t-\tau)\Deltah}
    e^{\pm i (t-\tau)\frac{|\nablah|}{|\nabla|}}
    \frac{\nablah}{|\nablah|}
    \frac{\partial_{x_3}^2}{|\nabla|^2}
    \sp{\frac{\nablah}{|\nablah|} \cdot \sp{\nablah \cdot (\vh \otimes \vh)(\tau)}}
    d\tau\\
    &\qquad
    -
    \frac{1}{2}
    \int_0^t
    e^{(t-\tau)\Deltah}
    e^{\pm i (t-\tau)\frac{|\nablah|}{|\nabla|}}
    \frac{\nablah}{|\nablah|}
    \frac{\partial_{x_3}^2}{|\nabla|^2}
    \sp{\frac{\nablah}{|\nablah|} \cdot \partial_{x_3}(\vh v_3)(\tau)}
    d\tau\\
    &\qquad
    +
    \frac{1}{2}
    \int_0^t
    e^{(t-\tau)\Deltah}
    e^{\pm i (t-\tau)\frac{|\nablah|}{|\nabla|}}
    \frac{\nablah}{|\nabla|}
    \frac{\partial_{x_3}}{|\nabla|}
    \nablah \cdot (v_3\vh)(\tau)
    d\tau\\
    &\qquad
    +
    \frac{1}{2}
    \int_0^t
    e^{(t-\tau)\Deltah}
    e^{\pm i (t-\tau)\frac{|\nablah|}{|\nabla|}}
    \frac{\nablah}{|\nabla|}
    \frac{\partial_{x_3}}{|\nabla|}
    \partial_{x_3}(v_3^2)(\tau)
    d\tau\\
    &\qquad 
    \pm 
    \frac{i}{2}
    \int_0^t
    e^{(t-\tau)\Deltah}
    e^{\pm i (t-\tau)\frac{|\nablah|}{|\nabla|}}
    \frac{\nablah}{|\nablah|}
    \frac{\partial_{x_3}}{|\nabla|}
    \nablah\cdot(\vh\theta)(\tau)
    d\tau\\
    &\qquad 
    \pm
    \frac{i}{2}
    \int_0^t
    e^{(t-\tau)\Deltah}
    e^{\pm i (t-\tau)\frac{|\nablah|}{|\nabla|}}
    \frac{\nablah}{|\nablah|}
    \frac{\partial_{x_3}}{|\nabla|}
    \partial_{x_3}(v_3\theta)(\tau)
    d\tau\\
    &\quad
    =:\sum_{j=1}^6
    \mathcal{D}_{\pm,j}^{\rm vel,h}[v,\theta](t),\\
    &\int_0^t
    e^{(t-\tau)\Deltah}
    e^{\pm i (t-\tau)\frac{|\nablah|}{|\nabla|}}
    Q_{\pm}^{\rm vel,v}
    ((v \cdot \nabla)v,v \cdot \nabla \theta)(\tau)d\tau
    \\
    &\quad=
    -
    \frac{1}{2}
    \int_0^t
    e^{(t-\tau)\Deltah}
    e^{\pm i (t-\tau)\frac{|\nablah|}{|\nabla|}}
    \frac{\partial_{x_3}}{|\nabla|}
    \sp{\frac{\nablah}{|\nabla|} \cdot \sp{\nablah \cdot (\vh \otimes \vh)(\tau)}}
    d\tau\\
    &\qquad
    -
    \frac{1}{2}
    \int_0^t
    e^{(t-\tau)\Deltah}
    e^{\pm i (t-\tau)\frac{|\nablah|}{|\nabla|}}
    \frac{\partial_{x_3}}{|\nabla|}
    \sp{\frac{\nablah}{|\nabla|} \cdot \partial_{x_3} (\vh v_3)(\tau)}
    d\tau\\
    &\qquad
    -
    \frac{1}{2}
    \int_0^t
    e^{(t-\tau)\Deltah}
    e^{\pm i (t-\tau)\frac{|\nablah|}{|\nabla|}}
    \frac{|\nablah|^2}{|\nabla|^2}
    \nablah \cdot (v_3\vh)(\tau)
    d\tau\\
    &\qquad
    -
    \frac{1}{2}
    \int_0^t
    e^{(t-\tau)\Deltah}
    e^{\pm i (t-\tau)\frac{|\nablah|}{|\nabla|}}
    \frac{|\nablah|^2}{|\nabla|^2}
    \partial_{x_3}(v_3^2)(\tau)
    d\tau\\
    &\qquad 
    \pm 
    \frac{i}{2}
    \int_0^t
    e^{(t-\tau)\Deltah}
    e^{\pm i (t-\tau)
    \frac{|\nablah|}{|\nabla|}}
    \frac{\nablah}{|\nabla|}
    \cdot 
    \frac{\nablah}{|\nablah|}
    \nablah\cdot(\vh\theta)(\tau)
    d\tau\\
    &\qquad 
    \pm
    \frac{i}{2}
    \int_0^t
    e^{(t-\tau)\Deltah}
    e^{\pm i (t-\tau)
    \frac{|\nablah|}{|\nabla|}}
    \frac{\nablah}{|\nabla|}
    \cdot 
    \frac{\nablah}{|\nablah|}
    \partial_{x_3}(v_3\theta)(\tau)
    d\tau\\
    &\quad
    =:\sum_{j=1}^6
    \mathcal{D}_{\pm,j}^{\rm vel,v}[v,\theta](t),\\
    &\int_0^t
    e^{(t-\tau)\Deltah}
    e^{\pm i (t-\tau)\frac{|\nablah|}{|\nabla|}}
    Q_{\pm}^{\rm temp}
    ((v \cdot \nabla)v,v \cdot \nabla \theta)(\tau)d\tau
    \\
    &\quad=
    \mp
    \frac{i}{2}
    \int_0^t
    e^{(t-\tau)\Deltah}
    e^{\pm i (t-\tau)\frac{|\nablah|}{|\nabla|}}
    \frac{\partial_{x_3}}{|\nabla|}\sp{
    \frac{\nablah}{|\nablah|} \cdot \sp{\nablah \cdot (\vh \otimes \vh)(\tau)}}
    d\tau\\
    &\qquad
    \mp
    \frac{i}{2}
    \int_0^t
    e^{(t-\tau)\Deltah}
    e^{\pm i (t-\tau)\frac{|\nablah|}{|\nabla|}}
    \frac{\partial_{x_3}}{|\nabla|}\sp{
    \frac{\nablah}{|\nablah|} \cdot \partial_{x_3} (\vh v_3)(\tau)}
    d\tau\\
    &\qquad
    \pm
    \frac{i}{2}
    \int_0^t
    e^{(t-\tau)\Deltah}
    e^{\pm i (t-\tau)\frac{|\nablah|}{|\nabla|}}
    \frac{\nablah}{|\nabla|}
    \cdot 
    \frac{\nablah}{|\nablah|}
    \nablah \cdot (v_3\vh)(\tau)
    d\tau\\
    &\qquad
    \pm
    \frac{i}{2}
    \int_0^t
    e^{(t-\tau)\Deltah}
    e^{\pm i (t-\tau)\frac{|\nablah|}{|\nabla|}}
    \frac{\nablah}{|\nabla|}
    \cdot 
    \frac{\nablah}{|\nablah|}
    \partial_{x_3}(v_3^2)(\tau)
    d\tau\\
    &\qquad 
    +
    \frac{1}{2}
    \int_0^t
    e^{(t-\tau)\Deltah}
    e^{\pm i (t-\tau)
    \frac{|\nablah|}{|\nabla|}}
    \nablah\cdot(\vh\theta)(\tau)
    d\tau\\
    &\qquad 
    +
    \frac{1}{2}
    \int_0^t
    e^{(t-\tau)\Deltah}
    e^{\pm i (t-\tau)
    \frac{|\nablah|}{|\nabla|}}
    \partial_{x_3}(v_3\theta)(\tau)
    d\tau\\
    &\quad
    =:
    \sum_{j=1}^6
    \mathcal{D}_{\pm,j}^{\rm temp}[v,\theta](t).
\end{align}

\subsection{Decay estimates of Duhamel terms}\label{subs:Duh}
For any $0<\ep<1/4$ and $0 \leq T_1 < T_2 \leq \infty$,
let us define the space $Y_{\varepsilon}(T_1,T_2)$ by  
\begin{align}
    Y_{\varepsilon}(T_1,T_2)
    :=
    \Mp{
    (v,\theta) \in C([T_1,T_2);H^{8}(\mathbb{R}^3))
    \ ;\ 
    \nabla \cdot v = 0,\ 
    \n{(v,\theta)}_{Y_{\varepsilon}(T_1,T_2)}
    < \infty
    },
\end{align}
where the norm $\n{(v,\theta)}_{Y_{\varepsilon}(T_1,T_2)}$ is defined by
\begin{align}
    \n{(v,\theta)}_{Y_{\varepsilon}(T_1,T_2)}
    :={}&
    \sup_{T_1 \leq t < T_2}
    \n{(v,\theta)(t)}_{H^{8}(\R^3)}\\
    &
    +
    \sup_{T_1 \leq t < T_2}
    (1+t)
    \n{(\vh,\theta)(t)}_{\dot{\mathcal{B}}_{2,1}^{1,\frac{1}{2}}(\R^3)}
    \\
    &
    +
    \sup_{T_1 \leq t < T_2}
    (1+t)^{\frac{5}{4}}
    \n{v_3(t)}_{\dot{\mathcal{B}}_{2,1}^{1,\frac{1}{2}}(\R^3)}
    \\
    &
    +
    \sum_{k=0}^4
    \sum_{ |\alphah| \leq 1}
    \sup_{T_1 \leq t < T_2}
    (1+t)^{\frac{1}{2}+\frac{|\alphah|}{2}}
    \n{\nablah^{\alphah}\partial_{x_3}^k(\vh,\theta)(t)}_{L^2(\R^3)}\\
    &+
    \sum_{k=0}^4
    \sum_{ |\alphah| \leq 1}
    \sup_{T_1 \leq t < T_2}
    (1+t)^{\frac{3}{4}+\frac{|\alphah|}{2}}
    \n{\nablah^{\alphah}\partial_{x_3}^kv_3(t)}_{L^2(\R^3)}\\
    &+
    \sum_{|\alpha|\leq 1}
    \sup_{2\leq p \leq \infty}
    \sup_{T_1 \leq t < T_2}
    (1+t)^{(1-\frac{1}{p})+\frac{|\alphah|}{2}}
    \n{\nabla^{\alpha} \vh (t)}_{L^p(\R^3)}\\
    &+
    \sum_{|\alphah|\leq 1}
    \sup_{2\leq p \leq \infty}
    \sup_{T_1 \leq t < T_2}
    (1+t)^{(1-\frac{1}{p})+\frac{|\alphah|}{2}+\frac{1}{4}+A_{\varepsilon}(p)}
    \n{ \nablah^{\alphah}v_3(t)}_{L^p(\R^3)}\\
    &+
    \sum_{|\alpha|\leq 1}
    \sup_{2\leq p \leq \infty}
    \sup_{T_1 \leq t < T_2}
    (1+t)^{(1-\frac{1}{p})+\frac{|\alphah|}{2}+A_0(p)}
    \n{\nabla^{\alpha} \theta (t)}_{L^p(\R^3)},
\end{align}
where  
\begin{align}
    A_{\varepsilon}(p)
    :={}&
    \min
    \Mp{
    \sp{\frac{3}{4}-\varepsilon}
    \sp{1-\frac{2}{p}},
    {\frac{1}{4}}
    }.
\end{align}
\begin{rem}
    Let us provide some remarks on the definition of $Y_{\varepsilon}(T_1,T_2)$-norm.
    \begin{enumerate}
        \item 
        We see that the $Y_{\varepsilon}(T_1,T_2)$-norm contains the following auxiliary terms:
        \begin{align}\label{aux-1}
            \sup_{T_1 \leq t < T_2}
            (1+t)
            \n{(\vh,\theta)(t)}_{\dot{\mathcal{B}}_{2,1}^{1,\frac{1}{2}}(\R^3)},
            \quad
            \sup_{T_1 \leq t < T_2}
            (1+t)^{\frac{5}{4}}
            \n{v_3(t)}_{\dot{\mathcal{B}}_{2,1}^{1,\frac{1}{2}}(\R^3)},
        \end{align}
        and
        \begin{align}\label{aux-2}
            \begin{split}
            &
            \sum_{k=0}^4
            \sum_{ |\alphah| \leq 1}
            \sup_{T_1 \leq t < T_2}
            (1+t)^{\frac{1}{2}+\frac{|\alphah|}{2}}
            \n{\nablah^{\alphah}\partial_{x_3}^k(\vh,\theta)(t)}_{L^2(\R^3)},
            \\
            &
            \sum_{k=0}^4
            \sum_{ |\alphah| \leq 1}
            \sup_{T_1 \leq t < T_2}
            (1+t)^{\frac{3}{4}+\frac{|\alphah|}{2}}
            \n{\nablah^{\alphah}\partial_{x_3}^kv_3(t)}_{L^2(\R^3)}.
            \end{split}
        \end{align}
        To estimate some nonlinear Duhamel integrals such as $\nablah \mathcal{D}_{\pm,1}^{\rm vel,h}$ in $L^{\infty}$ norm, 
        we need the decay information of $(v,\theta)$ in anisotropic Besov norm \eqref{aux-1}.
        In order to bound the Duhamel nonlinear terms by dispersive estimates, 
        we meet the following type calculation:
        \begin{align}
            &
            \n{
            e^{(t-\tau)\Deltah}e^{\pm i(t-\tau) \frac{|\nablah|}{|\nabla|}}\nabla^{\alpha}\nablah(v_kv_{\ell})(\tau)
            }_{L^p(\R^3)}
            \\
            &\quad
            \leq 
            C(t-\tau)^{-(1-\frac{1}{p})-\frac{|\alphah|}{2}-\frac{3}{4}(1-\frac{2}{p})}
            \n{\nablah(v_kv_{\ell})(\tau)}_{L^1_{\xh}W^{3+\alpha_3,1}{(\R^3) }   }  \\
            &\quad
            \leq 
            C(t-\tau)^{-(1-\frac{1}{p})-\frac{|\alphah|}{2}-\frac{3}{4}(1-\frac{2}{p})}\\
            &\qquad
            \times
            \sum_{m=1}^{3+\alpha_3}\n{\nablah \partial_{x_3}^mv(\tau)}_{L^2(\R^3)}
            \sum_{m=1}^{3+\alpha_3}\n{\partial_{x_3}^mv(\tau)}_{L^2(\R^3)}.
        \end{align}
        Then, we need \eqref{aux-2} for calculating the above right hand side.
        \item 
        Although there is no information on $\partial_{x_3}^5v_3$ in $Y_{\varepsilon}(T_1,T_2)$-norm, we may treat it by using the divergence free condition $\partial_{x_3}v_3=-\nablah \cdot \vh$ as follows:
        \begin{align}
        \n{\partial_{x_3}^5v_3(t)}_{L^2(\R^3)}
        =
        \n{\partial_{x_3}^4(\nablah \cdot \vh)(t)}_{L^2(\R^3)}
        \leq
        C
        \n{(v,\theta)}_{Y_{\varepsilon}(T_1,T_2)}
        (1+t)^{-1}
    \end{align} 
    for all {$T_1\leq t<T_2$}.
    \end{enumerate}
\end{rem}
Using this norm, we deduce several estimates for the Duhamel terms defined in the above subsection. We start with the Duhamel estimates in the anisotropic Besov space $\dot{\mathcal{B}}_{2,1}^{1,\frac{1}{2}}(\R^3)$.  
\begin{lemm}\label{lemm:B} 
Let $(v,\theta) \in Y_{\varepsilon}(0,T)$ with some $0<\varepsilon<1/4$ and $0<T\leq \infty$. Then, there exists a positive constant $C$ such that      
it holds    
\begin{align}
    &
    \begin{aligned}
    &
    \sum_{j=1,5}
    \sp{\n{\mathcal{D}_{\pm,j}^{\rm vel,h}[v,\theta](t)}_{\dot{\mathcal{B}}_{2,1}^{1,\frac{1}{2}}(\R^3)}
    +
    \n{\mathcal{D}_{\pm,j}^{\rm temp}[v,\theta](t)}_{\dot{\mathcal{B}}_{2,1}^{1,\frac{1}{2}} (\R^3)}
  {  +
    \n{\mathcal{D}_{\pm,j}^{\rm vel,v}[v,\theta](t)}_{\dot{\mathcal{B}}_{2,1}^{1,\frac{1}{2}}(\R^3)}  } 
    }\\
    &\quad
    +
    \n{\mathcal{D}_{1}[v](t)}_{\dot{\mathcal{B}}_{2,1}^{1,\frac{1}{2}}(\R^3)}
    \leq{}
   C 
    \n{(v,\theta)}_{Y_{\varepsilon}(0,T)}^2
    t^{-\frac{3}{2}}
    \log (2+t),
    \end{aligned}\\
    &
    \begin{aligned}
    &
    \sum_{j=2,6}
    \sp{\n{\mathcal{D}_{\pm,j}^{\rm vel,h}[v,\theta](t)}_{\dot{\mathcal{B}}_{2,1}^{1,\frac{1}{2}} (\R^3)}+\n{\mathcal{D}_{\pm,j}^{\rm temp}[v,\theta](t)}_{\dot{\mathcal{B}}_{2,1}^{1,\frac{1}{2}}(\R^3) }}\\
    &\quad
    +
    \n{\mathcal{D}_{2}[v](t)}_{\dot{\mathcal{B}}_{2,1}^{1,\frac{1}{2} } (\R^3)}
    \leq
  C
    \n{(v,\theta)}_{Y_{\varepsilon}(0,T)}^2
    t^{-1},
    \end{aligned}\\
    &
    \begin{aligned}
    &
    \sum_{j=3,4}
    \sp{\n{\mathcal{D}_{\pm,j}^{\rm vel,h}[v,\theta](t)}_{\dot{\mathcal{B}}_{2,1}^{1,\frac{1}{2}}(\R^3)}
    +
    \n{\mathcal{D}_{\pm,j}^{\rm temp}[v,\theta](t)}_{\dot{\mathcal{B}}_{2,1}^{1,\frac{1}{2}} (\R^3) }}
    \leq 
  C
    \n{(v,\theta)}_{Y_{\varepsilon}(0,T)}^2
    t^{-\frac{3}{2}},
    \end{aligned}\\
    &
    \begin{aligned}
    &
 {   \sum_{j=2,3,4,6} }
    \n{\mathcal{D}_{\pm,j}^{\rm vel,v}[v,\theta](t)}_{\dot{\mathcal{B}}_{2,1}^{1,\frac{1}{2}}(\R^3)}
    \leq 
      C 
    \n{(v,\theta)}_{Y_{\varepsilon}(0,T)}^2
    t^{   {  -\frac{3}{2} }  } 
    \end{aligned}\end{align}
for $2 \leq  t < T$.
\end{lemm}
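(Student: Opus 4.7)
The plan is to reduce every Duhamel piece to the model integral
\begin{align}
\mathcal{I}^{\alpha,\beta}[f,g](t):=\int_0^t \bigl\|e^{(t-\tau)\Deltah}\nablah^{\alpha}\p_{x_3}^{\beta}(fg)(\tau)\bigr\|_{\dot{\mathcal{B}}_{2,1}^{1,\frac{1}{2}}(\R^3)}\,d\tau,
\end{align}
with $f,g\in\{\vh,v_3,\theta\}$ and $(\alpha,\beta)\in\{0,1\}^2$ recording which derivatives survive from the divergence form of the nonlinearity. This reduction is legitimate because every zeroth-order Fourier multiplier occurring in $\Ph$ and in the projectors $Q_{\pm}^{\rm vel,h}, Q_{\pm}^{\rm vel,v}, Q_{\pm}^{\rm temp}$ is bounded on $\dot{\mathcal{B}}_{2,1}^{1,\frac{1}{2}}(\R^3)$ by Lemma~\ref{lemm:M-bdd}, and the dispersive propagator $e^{\pm i(t-\tau)|\nablah|/|\nabla|}$ preserves each $L^2$-frequency block in modulus and is therefore an isometry on the same anisotropic Besov space.

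I would then bound $\mathcal{I}^{\alpha,\beta}[f,g]$ by splitting the integral at $t/2$. On the near interval $[t/2,t]$ the product estimate of Lemma~\ref{lemm:para} yields $\|fg\|_{\dot{\mathcal{B}}_{2,1}^{1,\frac{1}{2}}}\le C\|f\|_{\dot{\mathcal{B}}_{2,1}^{1,\frac{1}{2}}}\|g\|_{\dot{\mathcal{B}}_{2,1}^{1,\frac{1}{2}}}$, each outer $\nablah$ costs the usual heat-smoothing factor $(t-\tau)^{-\alpha/2}$, and any vertical $\p_{x_3}$ is absorbed into the bilinear by Leibniz together with the divergence-free identity $\p_{x_3}v_3=-\nablah\cdot\vh$. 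On the far interval $[0,t/2]$ I pass to an $L^1$-endpoint estimate: combining the anisotropic Bernstein inequality (Lemma~\ref{lemm:Bern}), the elementary sum $\sum_j 2^{(2+\alpha)j}e^{-c 2^{2j}s}\le Cs^{-1-\alpha/2}$, and the interpolation of Lemma~\ref{lemm:inter}, one arrives at
\begin{align}
\bigl\|e^{s\Deltah}\nablah^{\alpha}F\bigr\|_{\dot{\mathcal{B}}_{2,1}^{1,\frac{1}{2}}(\R^3)} \le C\,s^{-1-\frac{\alpha}{2}}\|F\|_{\dot{\mathcal{B}}_{1,\infty}^{0,0}(\R^3)}^{1/2}\|F\|_{\dot{\mathcal{B}}_{1,\infty}^{0,2}(\R^3)}^{1/2}.
\end{align}
The $L^1$-sizes of $fg$ and of $\p_{x_3}^m(fg)$ with $m\le 2$ are then estimated by Leibniz and Cauchy--Schwarz in terms of products $\|\p_{x_3}^{m_1}f\|_{L^2}\|\p_{x_3}^{m_2}g\|_{L^2}$ with $m_1+m_2\le m$, all of which enter the $Y_{\varepsilon}$-norm with their appropriate decay rates, including the enhanced-dissipation factor $t^{-1/4}$ whenever a $v_3$-factor is present.

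The four claimed rates emerge by inserting the corresponding bilinear decays into this template. For $\vh\otimes\vh$ and $\vh\theta$ (claim~1), one has $L^1$-decay $(1+\tau)^{-1}$ and Besov decay $(1+\tau)^{-2}$; the far-part identity $\int_0^{t/2}(t-\tau)^{-3/2}(1+\tau)^{-1}\,d\tau\sim t^{-3/2}\log(2+t)$ produces exactly the stated rate. For the $v_3$-containing bilinears $\vh v_3$ and $v_3\theta$ (claim~2) with only a $\p_{x_3}$ on the outside, no horizontal smoothing from $e^{(t-\tau)\Deltah}$ is available and the rate $t^{-1}$ is recovered entirely from the enhanced-dissipation factor of the $v_3$-bound. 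Claims~3 and~4 cover either $\mathcal{D}_{\pm,j}^{\rm vel,v}$-type pieces whose zeroth-order multiplier (of the form $\p_{x_3}\nablah/|\nabla|^2$, $|\nablah|^2/|\nabla|^2$, or $\nablah\p_{x_3}/(|\nabla||\nablah|)$) has Fourier symbol bounded in modulus by $|\xih|$, effectively contributing an extra $\nablah$-derivative, or the double-$v_3$ bilinear $v_3^2$; in both cases the additional horizontal smoothing or the faster bilinear decay upgrade the integrability to $t^{-3/2}$.

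The main obstacle is claim~2, where the outer derivative is purely vertical: since $\p_{x_3}$ commutes freely with $e^{(t-\tau)\Deltah}$ and cannot be absorbed by horizontal smoothing, all of the time-integrability must come from the bilinear factor. To avoid the spurious loss of regularity caused by a naive bound in $\dot{\mathcal{B}}_{2,1}^{1,\frac{3}{2}}$, I would systematically use the divergence-free identity $\p_{x_3}v_3=-\nablah\cdot\vh$, for instance through $\p_{x_3}(\vh v_3)=-(\nablah\cdot\vh)\vh+v_3\p_{x_3}\vh$, to trade every $\p_{x_3}$ hitting a $v_3$-factor for a $\nablah$ on a $\vh$-factor, after which the heat smoothing and product estimate combine to deliver the claimed $t^{-1}$ bound.
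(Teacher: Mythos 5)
Your overall template (split at $t/2$; $L^1$-based heat/Bernstein decay on $[0,t/2]$ via Lemma \ref{lemm:Bern} and Lemma \ref{lemm:inter}; the algebra property of Lemma \ref{lemm:para} on $[t/2,t]$; boundedness of the zeroth-order multipliers by Lemma \ref{lemm:M-bdd} and the $L^2$-isometry of $e^{\pm i(t-\tau)|\nablah|/|\nabla|}$) is exactly the paper's strategy, and your treatment of claims 1, 3, 4 and of the far-time part of claim 2 matches the paper's computations. The gap is in your handling of the near-time interval for claim 2. There you propose to use $\p_{x_3}v_3=-\nablah\cdot\vh$ to rewrite $\p_{x_3}(v_3\vh)=v_3\,\p_{x_3}\vh-\vh\,(\nablah\cdot\vh)$ and then invoke ``heat smoothing and the product estimate.'' But on $[t/2,t]$ the traded horizontal derivative sits \emph{inside} the product, where the semigroup cannot smooth it, so the product estimate forces you to control $\n{\nablah\vh(\tau)}_{\dot{\mathcal{B}}_{2,1}^{1,\frac12}}\sim\n{\vh(\tau)}_{\dot{\mathcal{B}}_{2,1}^{2,\frac12}}$. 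This quantity is not in the $Y_\varepsilon$-norm; interpolating it between $\dot{\mathcal{B}}_{2,1}^{1,\frac12}$ (decay $(1+\tau)^{-1}$) and any $\dot{\mathcal{B}}_{2,1}^{s,\frac12}$ with $s+\tfrac12<8$ controlled by $H^8$ gives at best $(1+\tau)^{-\vartheta}$ with $\vartheta=(s-2)/(s-1)<1$. Hence $\n{\vh(\nablah\cdot\vh)(\tau)}_{\dot{\mathcal{B}}_{2,1}^{1,\frac12}}\lesssim(1+\tau)^{-1-\vartheta}$ with $1+\vartheta<2$, and $\int_{t/2}^{t}(1+\tau)^{-1-\vartheta}d\tau\sim t^{-\vartheta}$, which does not reach the claimed $t^{-1}$.

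The reason your identity fails here is that it converts the well-decaying factor $\p_{x_3}v_3$ into $\nablah\vh$, which enjoys no enhanced decay in the $\dot{\mathcal{B}}_{2,1}^{1,\frac12}$ scale. The paper goes the opposite way: it keeps the vertical derivative on the product, bounds $\n{\p_{x_3}(v_3\vh)}_{\dot{\mathcal{B}}_{2,1}^{1,\frac12}}\le\n{v_3\vh}_{\dot{\mathcal{B}}_{2,1}^{1,\frac32}}$, and interpolates this norm between $\dot{\mathcal{B}}_{2,1}^{1,\frac12}$ and $\dot{\mathcal{B}}_{2,1}^{1,6}$ with exponents $\tfrac{9}{11}$ and $\tfrac{2}{11}$ (the high norm being controlled by $H^8$ via Lemma \ref{lemm:emb} and a Leibniz/$L^\infty$ bound). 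The low norm carries the product decay $(1+\tau)^{-1-\frac54}$ coming from the \emph{enhanced} decay $\n{v_3}_{\dot{\mathcal{B}}_{2,1}^{1,\frac12}}\lesssim(1+\tau)^{-\frac54}$, so the integrand decays like $(1+\tau)^{-\frac{81}{44}-\frac{2}{11}}=(1+\tau)^{-\frac{89}{44}}$, just beyond the integrability threshold $2$, yielding $t^{-\frac{45}{44}}\le t^{-1}$. The margin is razor-thin ($\frac{89}{44}-2=\frac{1}{44}$), which is precisely why the specific choice of where to place the derivative matters; your version of the term $\vh\,\p_{x_3}v_3$ should therefore be estimated without the divergence-free substitution.
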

\begin{proof} 

\noindent
{\it Step 1. Estimates of $\{\mathcal{D}_{\pm,j}^{\rm vel,h}[v,\theta]\}_{j=1,5}$, $\{\mathcal{D}_{\pm,j}^{\rm temp}[v,\theta]\}_{j=1,5}$, $\{\mathcal{D}_{\pm,j}^{\rm vel,v}[v,\theta]\}_{j=1,5}$, and $\mathcal{D}_{1}[v]$.}

We only focus on the estimates of $\mathcal{D}_{\pm,1}^{\rm vel,h}[v,\theta]$ as the others are treated similarly.
We see from the Plancherel theorem that 
\begin{align}
    &
    \n{\mathcal{D}_{\pm,1}^{\rm vel,h}[v,\theta](t)}_{\dot{\mathcal{B}}_{2,1}^{1,\frac{1}{2}}(\R^3)}
    \leq{}
    C
    \int_0^t
    \n{e^{(t-\tau)\Deltah}(\vh \otimes \vh)(\tau)}_{\dot{\mathcal{B}}_{2,1}^{2,\frac{1}{2}}(\R^3)}d\tau\\
    &\quad\leq{}
    C
    \int_0^{\frac{t}{2}}
    (t-\tau)^{-\frac{3}{2}}
    \sup_{j\in \mathbb{Z}}
    \sum_{k\in \mathbb{Z}}
    2^{\frac{1}{2}k}
    \n{    {  \Deltahh_j \Deltav_k }  (\vh \otimes \vh)(\tau)}_{L^2_{x_3}L^1_{\xh}(\R^3)}d\tau\\
    &\qquad
    +
    C
    \int_{\frac{t}{2}}^{t}
    (t-\tau)^{-\frac{1}{2}}
    \sup_{j\in \mathbb{Z}}
    \sum_{k\in \mathbb{Z}}
    2^j 
    2^{\frac{1}{2}k}
    \n{   {  \Deltahh_j \Deltav_k } 
    (\vh \otimes \vh)(\tau)}_{L^2(\R^3)}d\tau\\
    &\quad\leq{}
    C
    \int_0^{\frac{t}{2}}
    (t-\tau)^{-\frac{3}{2}}
    \sum_{m=0}^1
    \n{\partial_{x_3}^m    
    (\vh \otimes \vh)(\tau)}_{L^2_{x_3}L^1_{\xh}(\R^3)}d\tau\\
    &\qquad
    +
    C
    \int_{\frac{t}{2}}^{t}
    (t-\tau)^{-\frac{1}{2}}
    \n{(\vh \otimes \vh)(\tau)}_{\dot{\mathcal{B}}_{2,1}^{1,\frac{1}{2}}(\R^3)}d\tau\\
    &\quad\leq{}
    C
    \int_0^{\frac{t}{2}}
    (t-\tau)^{-\frac{3}{2}}
    \sum_{m=0}^1
    \n{\partial_{x_3}^m\vh(\tau)}_{L^2(\R^3)}\n{\vh(\tau)}_{L^2(\R^3)}^{\frac{1}{2}}\n{\partial_{x_3}\vh(\tau)}_{L^2(\R^3)}^{\frac{1}{2}}d\tau\\
    &\qquad
    +
    C
    \int_{\frac{t}{2}}^{t}
    (t-\tau)^{-\frac{1}{2}}
    \n{\vh(\tau)}_{\dot{\mathcal{B}}_{2,1}^{1,\frac{1}{2}}(\R^3)}^2d\tau\\
    &\quad\leq{}
    C
    \n{(v,\theta)}_{Y_{\varepsilon}(0,T)}^2
    \sp{
    \int_0^{\frac{t}{2}}
    (t-\tau)^{-\frac{3}{2}}
    (1+\tau)^{-1}
    d\tau
    +
    \int_{\frac{t}{2}}^{t}
    (t-\tau)^{-\frac{1}{2}}
    (1+\tau)^{-2}d\tau
    }\\
    &\quad\leq{}
    C
    \n{(v,\theta)}_{Y_{\varepsilon}(0,T)}^2
    t^{-\frac{3}{2}}\log t.
\end{align}
This completes the first step.

\noindent
{\it Step 2. Estimates of $\{\mathcal{D}_{\pm,j}^{\rm vel,h}[v,\theta]\}_{j=2,6}$, $\{\mathcal{D}_{\pm,j}^{\rm temp}[v,\theta]\}_{j=2,6}$, and $\mathcal{D}_{2}[v]$.}

We only focus on the estimates of $\mathcal{D}_{\pm,2}^{\rm vel,h}[v,\theta]$ as the others are treated similarly. It follows that 
\begin{align}
    &
    \n{\mathcal{D}_{\pm,2}^{\rm vel,h}[v,\theta](t)}_{\dot{\mathcal{B}}_{2,1}^{1,\frac{1}{2}}(\R^3)}
    \leq{}
    C
    \int_0^t
    \n{e^{(t-\tau)\Deltah} \p_{x_3} (v_3\vh)(\tau)}_{\dot{\mathcal{B}}_{2,1}^{1,\frac{1}{2}}(\R^3)}d\tau\\
    &\quad\leq{}
    C
    \int_0^{\frac{t}{2}}
    (t-\tau)^{-1}
    \sup_{j\in \mathbb{Z}}
    \sum_{k\in \mathbb{Z}}
    2^{\frac{1}{2}k}
    \n{ \p_{x_3} (v_3\vh)(\tau) }_{L^2_{x_3}L^1_{\xh}(\R^3)}d\tau\\
    &\qquad 
    +
    C
    \int_{\frac{t}{2}}^{t}
    \n{ \p_{x_3} (v_3\vh)(\tau)}_{\dot{\mathcal{B}}_{2,1}^{1,\frac{1}{2}}(\R^3)}
    d\tau\\
    &\quad\leq{}
    C
    \int_0^{\frac{t}{2}}
    (t-\tau)^{-1}
    \sum_{m=0}^1
    \n{\partial_{x_3}^{m+1}(v_3 \vh)   (\tau)}_{L^2_{x_3}L^1_{\xh}(\R^3)}d\tau\\
    &\qquad
    +
    C
    \int_{\frac{t}{2}}^{t}
    \n{ \p_{x_3} (v_3\vh)(\tau)}_{\dot{\mathcal{B}}_{2,1}^{1,\frac{1}{2}}(\R^3)}d\tau\\
     &\quad\leq{}
    C 
      \n{(v,\theta)}_{Y_{\varepsilon}(0,T)}^2
    \int_0^{\frac{t}{2}}
    (t-\tau)^{-1}
   (1+\tau)^{-\f{13}{12}}  d\tau\\
    &\qquad
    +
    C
    \int_{\frac{t}{2}}^{t}
    \n{ (v_3\vh)(\tau)}_{\dot{\mathcal{B}}_{2,1}^{1,\frac{3}{2}}(\R^3)}d\tau \\
     &\quad\leq{}
    C \n{(v,\theta)}_{Y_{\varepsilon}(0,T)}^2
    (1+\tau)^{-1}
    + 
     C
    \int_{\frac{t}{2}}^{t}
    \n{ (v_3\vh)(\tau)}_{\dot{\mathcal{B}}_{2,1}^{1,\frac{3}{2}}(\R^3)}d\tau.
\end{align}
Here, we see from Lemmas \ref{lemm:para} and \ref{lemm:emb} and the interpolation inequality that
\begin{align}
 & \int_{\frac{t}{2}}^{t}
    \n{ (v_3\vh)(\tau)}_{\dot{\mathcal{B}}_{2,1}^{1,\frac{3}{2}}(\R^3)}
    d\tau \\
    &
    \quad 
    \leq   
    C
     \int_{\frac{t}{2}}^{t}
     \n{ (v_3\vh)(\tau)}_{\dot{\mathcal{B}}_{2,1}^{1,\frac{1}{2}}(\R^3)}^{\f{9}{11}  }
     \n{ (v_3\vh)(\tau)}_{\dot{\mathcal{B}}_{2,1}^{1,6}(\R^3)}
   ^{\f{2}{11}  }
     d\tau \\
     &
     \quad  
     \leq 
     C
     \int_{\frac{t}{2}}^{t}
     \left(
  \n{ v_3(\tau)}_{\dot{\mathcal{B}}_{2,1}^{1,\frac{1}{2}}(\R^3)} 
  \n{ \vh(\tau)}_{\dot{\mathcal{B}}_{2,1}^{1,\frac{1}{2}}(\R^3)} 
  \right)^{\f{9}{11}  } 
  \n{ (v_3\vh)(\tau)}_{H^8(\R^3)}^{\f{2}{11}  } d\tau \\
  &
     \quad  
     \leq 
     C
     \int_{\frac{t}{2}}^{t}
     \left(
  \n{ v_3(\tau)}_{\dot{\mathcal{B}}_{2,1}^{1,\frac{1}{2}}(\R^3)} 
  \n{ \vh(\tau)}_{\dot{\mathcal{B}}_{2,1}^{1,\frac{1}{2}}(\R^3)} 
  \right)^{\f{9}{11}  } \\
  & \qquad \qquad  
  \times 
  \left(
 \n{v_3(\tau)}_{H^8(\R^3)} \n{\vh(\tau)}_{L^{\infty}(\R^3)}
 +\n{\vh(\tau)}_{H^8(\R^3)} 
  \n{v_3 (\tau)}_{L^{\infty}(\R^3)}
  \right)
  ^{\f{2}{11}  } d\tau \\
  &
   \quad 
    \leq   
    C
     \n{(v,\theta)}_{Y_{\varepsilon}(0,T)}^2
      \int_{\frac{t}{2}}^{t}
      (1+\tau)^{-\f{81}{44}} 
      (1+\tau)^{-\f{2}{11}} d\tau \\
      &
   \quad =
    C
     \n{(v,\theta)}_{Y_{\varepsilon}(0,T)}^2
     t^{-\f{45}{44}}.
\end{align}
This finishes the second step.

\noindent
{\it Step 3. Estimates of $\{\mathcal{D}_{\pm,j}^{\rm vel,h}[v,\theta]\}_{j=3,4}$, $\{\mathcal{D}_{\pm,j}^{\rm temp}[v,\theta]\}_{j=3,4}$, and $\{\mathcal{D}_{\pm,j}^{\rm vel,v}[v,\theta]\}_{j=2,3,4,6}$.}

We only focus on the estimates of $\mathcal{D}_{\pm,3}^{\rm vel,h}[v,\theta]$ as the others are treated similarly. 
It follows that 
\begin{align}
    &
    \n{\mathcal{D}_{\pm,3}^{\rm vel,h}[v,\theta](t)}_{\dot{\mathcal{B}}_{2,1}^{1,\frac{1}{2}}(\R^3)}
    \leq{}
    C
    \int_0^t
    \n{e^{(t-\tau)\Deltah}(v_3 \vh) (\tau)}_{\dot{\mathcal{B}}_{2,1}^{2,\frac{1}{2}}(\R^3)}d\tau\\
    &\quad\leq{}
    C
    \int_0^{\frac{t}{2}}
    (t-\tau)^{-\frac{3}{2}}
    \sup_{j\in \mathbb{Z}}
    \sum_{k\in \mathbb{Z}}
    2^{\frac{1}{2}k}
    \n{     \Deltahh_j \Deltav_k  (v_3\vh)(\tau)}_{L^2_{x_3}L^1_{\xh}(\R^3)}d\tau\\
    &\qquad
    +
    C
    \int_{\frac{t}{2}}^{t}
    (t-\tau)^{-\frac{1}{2}}
    \sup_{j\in \mathbb{Z}}
    \sum_{k\in \mathbb{Z}}
    2^j 
    2^{\frac{1}{2}k}
    \n{     \Deltahh_j \Deltav_k  
    (v_3 \vh)(\tau)}_{L^2(\R^3)}d\tau\\ 
    &\quad\leq{}
    C
    \int_0^{\frac{t}{2}}
    (t-\tau)^{-\frac{3}{2}}
    \sum_{m=0}^1
    \n{\partial_{x_3}^m     
    (v_3 \vh)(\tau)}_{L^2_{x_3}L^1_{\xh}(\R^3)}d\tau\\
    &\qquad
    +
    C
    \int_{\frac{t}{2}}^{t}
    (t-\tau)^{-\frac{1}{2}}
    \n{(v_3 \vh)(\tau)}_{\dot{\mathcal{B}}_{2,1}^{1,\frac{1}{2}}(\R^3)}d\tau \\
    &\quad\leq{}
    C
     \n{(v,\theta)}_{Y_{\varepsilon}(0,T)}^2
    \int_0^{\frac{t}{2}}
    (t-\tau)^{-\frac{3}{2}}
    (1+\tau)^{-\f{13}{12}} 
    d\tau \\
    & \qquad 
    +
    C
    \int_{\frac{t}{2}}^{t}
    (t-\tau)^{-\frac{1}{2}}
    \n{\vh(\tau)}_{\dot{\mathcal{B}}_{2,1}^{1,\frac{1}{2}}(\R^3)}
    \n{v_3(\tau)}_{\dot{\mathcal{B}}_{2,1}^{1,\frac{1}{2}}(\R^3)}
    d\tau\\
    &\quad\leq{}
    C
    \n{(v,\theta)}_{Y_{\varepsilon}(0,T)}^2
    \sp{
    \int_0^{\frac{t}{2}}
    (t-\tau)^{-\frac{3}{2}}
    (1+\tau)^{-\f{13}{12}}
    d\tau
    +
    \int_{\frac{t}{2}}^{t}
    (t-\tau)^{-\frac{1}{2}}
    (1+\tau)^{-\f{9}{4}}
    d\tau
    }\\
    &\quad\leq{}
    C
    \n{(v,\theta)}_{Y_{\varepsilon}(0,T)}^2
    t^{-\frac{3}{2}}.
\end{align}
This finishes the third step. 
\end{proof}

We proceed with the Duhamel estimates in $L^2$.
\begin{lemm}\label{lemm:L^2} 
Let $(v,\theta) \in Y_{\varepsilon}(0,T)$ with some $0<\varepsilon<1/4$ and $2<T\leq \infty$. Then, there exists a positive constant $C_{\ep}$ such that for any $2 \leq p \leq \infty$,
it holds    
\begin{align}
    &
    \begin{aligned}
    &
    \sum_{j=1,5}
    {\sum_{|\alphah| \leq 1}}
    \sum_{k=0}^4
    \sp{\n{\nablah^{\alphah}\partial_{x_3}^k\mathcal{D}_{\pm,j}^{\rm vel,h}[v,\theta](t)}_{L^2(\R^3)}+\n{\nablah^{\alphah}\partial_{x_3}^k\mathcal{D}_{\pm,j}^{\rm temp}[v,\theta](t)}_{L^2(\R^3)}}\\
    &\quad
    +
    {\sum_{|\alphah| \leq 1}}
    \sum_{k=0}^4
    \n{\nablah^{\alphah}\partial_{x_3}^k\mathcal{D}_{1}[v,\theta](t)}_{L^2(\R^3)}
    \leq{}
    C  
    \n{(v,\theta)}_{Y_{\varepsilon}(0,T)}^2
    t^{-\frac{1}{2}-\frac{1+|\alphah|}{2}}
    \log (2+t),
    \end{aligned}\\
    &
    \begin{aligned}
        &
        \sum_{j=2,6}
        {\sum_{|\alphah| \leq 1}}
        \sum_{k=0}^4
        \sp{\n{\nablah^{\alphah}\partial_{x_3}^k\mathcal{D}_{\pm,j}^{\rm vel,h}[v,\theta](t)}_{L^2(\R^3)}+\n{\nablah^{\alphah}\partial_{x_3}^k\mathcal{D}_{\pm,j}^{\rm temp}[v,\theta](t)}_{L^2(\R^3)}}\\
        &\quad
        +
        {\sum_{|\alphah| \leq 1}}
        \sum_{k=0}^4
        \n{\nablah^{\alphah}\partial_{x_3}^k\mathcal{D}_{2}[v,\theta](t)}_{L^2(\R^3)}
        \leq
         C 
        \n{(v,\theta)}_{Y_{\varepsilon}(0,T)}^2
        t^{-\frac{1}{2}-\frac{|\alphah|}{2}},
        \end{aligned}\\
        &
        \begin{aligned}
        &
        \sum_{j=3,4}
        {\sum_{|\alphah| \leq 1}}
        \sum_{k=0}^4
        \sp{\n{\nablah^{\alphah}\partial_{x_3}^k\mathcal{D}_{\pm,j}^{\rm vel,h}[v,\theta](t)}_{L^2(\R^3)}+\n{\nablah^{\alphah}\partial_{x_3}^k\mathcal{D}_{\pm,j}^{\rm temp}[v,\theta](t)}_{L^2(\R^3)}}\\
        &\quad
        \leq 
        C 
        \n{(v,\theta)}_{Y_{\varepsilon}(0,T)}^2
        t^{-\frac{1}{2}-\frac{1+|\alphah|}{2}},
        \end{aligned}\\
        &
        \begin{aligned}
        &
        {  \sum_{j=1,5} }
        {\sum_{|\alpha| \leq 1}}
        \sum_{k=0}^4
        \n{\nabla^{\alpha} \mathcal{D}_{\pm,j}^{\rm vel,v}[v,\theta](t)}_{L^2(\R^3)}
        \leq 
         C  
        \n{(v,\theta)}_{Y_{\varepsilon}(0,T)}^2
        t^{-\frac{1}{2}-\frac{1+|\alpha|}{2}}
         {  \log t  } ,
        \end{aligned}  \\
         &
        \begin{aligned}
        &
        {  \sum_{j=2,3,4,6}  }  
        {\sum_{|\alpha| \leq 1}}
        \sum_{k=0}^4
        \n{\nabla^{\alpha} \mathcal{D}_{\pm,j}^{\rm vel,v}[v,\theta](t)}_{L^2(\R^3)}
        \leq 
         C  
        \n{(v,\theta)}_{Y_{\varepsilon}(0,T)}^2
        t^{-\frac{1}{2}-\frac{1+|\alpha|}{2}}
    \end{aligned}
    \end{align}
for $2 \leq  t < T$.
\end{lemm}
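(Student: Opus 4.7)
The plan is to mimic the three-step argument used in the proof of Lemma \ref{lemm:B}, but this time controlling $L^2$-norms (with horizontal and vertical derivatives up to order one and four, respectively) instead of $\dot{\mathcal{B}}_{2,1}^{1,\frac{1}{2}}$-norms. As in that proof, we split the Duhamel integral as $\int_0^t = \int_0^{t/2} + \int_{t/2}^t$. On $[0,t/2]$ we invoke the $L^1$-to-$L^2$ dispersive bound from Lemma \ref{disp-est-p} (second estimate) together with the anisotropic Bernstein inequalities, exploiting the fact that the source has had time to smooth. On $[t/2,t]$ we simply use the $L^2$-boundedness of $\{e^{\tau\Deltah}e^{\pm i\tau \frac{|\nablah|}{|\nabla|}}\}$ (Plancherel) combined with the a priori $L^2$ decay of the nonlinearity already encoded in $\|(v,\theta)\|_{Y_\varepsilon(0,T)}$. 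Every horizontal derivative $\nablah^{\alphah}$ placed outside the Duhamel integral will either produce a factor $(t-\tau)^{-1/2}$ from the horizontal heat kernel on $[0,t/2]$ or be absorbed into a $\dot B^{1,\frac{1}{2}}_{2,1}$-type norm on $[t/2,t]$. The vertical derivatives $\partial_{x_3}^k$ with $0\le k\le 4$ commute through $e^{(t-\tau)\Deltah}e^{\pm i(t-\tau)\frac{|\nablah|}{|\nabla|}}$ and land on the product, where we distribute them by Leibniz.

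For the first group ($j=1,5$), corresponding to the quadratic terms $\vh\otimes\vh$ and $\vh\theta$ (and $\mathcal{D}_1[v]$), the only anisotropy is that both factors enjoy the full $L^p$-decay of $\vh,\theta$. On $[0,t/2]$ we estimate the product in $L^1_{\xh}W^{k+1,1}_{x_3}$ using $L^2_{\xh}\cdot L^2_{\xh}$ for each $x_3$-slice and the $\partial_{x_3}$-derivatives provided by the $Y_\varepsilon$-norm; this yields $\int_0^{t/2}(t-\tau)^{-\frac12-\frac{1+|\alphah|}{2}}(1+\tau)^{-1}d\tau \lesssim t^{-\frac12-\frac{1+|\alphah|}{2}}\log(2+t)$, which is the source of the logarithm. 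On $[t/2,t]$ we use Plancherel and the product estimate of Lemma \ref{lemm:para} combined with the $\dot{\mathcal{B}}_{2,1}^{1,\frac{1}{2}}$-decay $(1+\tau)^{-1}$ in $Y_\varepsilon$. For the $\mathcal{D}^{\rm vel,v}_{\pm,j}$ terms with $j=1,5$, the same argument works but the extra multiplier $\partial_{x_3}/|\nabla|$ in $Q^{\rm vel,v}_{\pm}$ is zeroth-order by Lemma \ref{lemm:M-bdd}, so no difference arises in decay.

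For the second group ($j=2,6$), the presence of an interior $\partial_{x_3}$ on the cross terms $\vh v_3$ or $v_3 \theta$ costs one vertical derivative. Repeating the split, the $[0,t/2]$-piece requires $\partial_{x_3}^{k+1}$ of $v_3 \vh$ or $v_3\theta$ in $L^1_{\xh}L^2_{x_3}$; since the $Y_\varepsilon$-norm controls $\partial_{x_3}^m (\vh,\theta)$ up to $m=4$ and $\partial_{x_3}^5 v_3=-\nablah\cdot\partial_{x_3}^4\vh$ by the divergence-free condition (as remarked in the paper), this stays admissible. The resulting time integral has integrand $(1+\tau)^{-1}$ up to constants, producing $t^{-\frac12-\frac{|\alphah|}{2}}$ without a logarithm, as claimed. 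The $[t/2,t]$-piece is handled by interpolating $\dot{\mathcal{B}}_{2,1}^{1,\frac{3}{2}}$ between $\dot{\mathcal{B}}_{2,1}^{1,\frac{1}{2}}$ and the $H^8$-smoothness of the solution, mirroring Step 2 of the proof of Lemma \ref{lemm:B}.

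For the third group ($j=3,4$ in vel,h and temp, and $j=2,3,4,6$ in vel,v), every product contains at least one factor of $v_3$, so we substitute the enhanced decay rates $(1+\tau)^{-\frac{3}{4}-\frac{|\alphah|}{2}}$ from the $Y_\varepsilon$-norm. This extra $t^{-1/4}$-factor, combined with the same time-splitting as above, upgrades the decay to $t^{-\frac12-\frac{1+|\alphah|}{2}}$ without any logarithm, which also explains why the bound for $\mathcal{D}^{\rm vel,v}_{\pm,j}$ with $j=2,3,4,6$ is cleaner than that for $j=1,5$: only the purely horizontal quadratic terms cannot benefit from $v_3$'s enhanced dissipation. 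The main technical obstacle, as in Lemma \ref{lemm:B}, is to carefully book-keep the distribution of vertical derivatives across products of anisotropically-decaying factors so that each resulting time integral converges with the stated rate; the rest is a mechanical repetition of the Hölder/Plancherel/dispersion scheme already exhibited above.
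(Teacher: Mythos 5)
Your overall scheme (split the Duhamel integral at $t/2$, use horizontal heat smoothing from $L^1_{\xh}$ to $L^2_{\xh}$ on $[0,t/2]$, and Plancherel plus Leibniz/H\"older against the $Y_\varepsilon$-decay rates on $[t/2,t]$) is the same as the paper's, and your treatment of the first and third groups is consistent with it. There is, however, a concrete flaw in your second group ($j=2,6$ and $\mathcal{D}_2$): you assert that ``the resulting time integral has integrand $(1+\tau)^{-1}$ up to constants, producing $t^{-\frac12-\frac{|\alphah|}{2}}$ without a logarithm.'' These two claims are incompatible: since $(t-\tau)^{-\frac12-\frac{|\alphah|}{2}}\sim t^{-\frac12-\frac{|\alphah|}{2}}$ for $\tau\in[0,t/2]$, an integrand of size $(1+\tau)^{-1}$ yields exactly $t^{-\frac12-\frac{|\alphah|}{2}}\log t$, i.e. the logarithm you need to avoid. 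The way out --- and what the paper does --- is to observe that the $j=2,6$ nonlinearities are precisely the cross terms $\partial_{x_3}(v_3\vh)$ and $\partial_{x_3}(v_3\theta)$, each containing a factor of $v_3$; feeding in the enhanced rates $\|\partial_{x_3}^m v_3(\tau)\|_{L^2}\leq C(1+\tau)^{-3/4}$ for $m\leq 4$ (together with $\partial_{x_3}^5 v_3=-\partial_{x_3}^4\nablah\cdot\vh$) produces an integrand of order $(1+\tau)^{-13/12}$, which is integrable and removes the logarithm. You reserve the ``one factor is $v_3$'' observation for the third group only; it is equally indispensable in the second.

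A secondary point: for the third group you attribute the improvement from $t^{-\frac12-\frac{|\alphah|}{2}}$ to $t^{-\frac12-\frac{1+|\alphah|}{2}}$ to ``this extra $t^{-1/4}$-factor'' coming from $v_3$'s enhanced dissipation. A gain of $t^{-1/4}$ in the integrand cannot by itself produce an extra $t^{-1/2}$ in the output; the extra $\tfrac12$ actually comes from the horizontal derivative $\nablah\cdot$ sitting on those nonlinearities (e.g. $\nablah\cdot(v_3\vh)$), which the horizontal heat kernel converts into a factor $(t-\tau)^{-1/2}$ on $[0,t/2]$ (and which is handled by the $L^2$-smoothing bound on $[t/2,t]$), while the enhanced decay of $v_3$ serves, as in the second group, to make the $\tau$-integral converge without a logarithm. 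With these two corrections your plan matches the paper's proof.
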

\begin{proof} 

\noindent
{\it Step 1. Estimates of $\{\mathcal{D}_{\pm,j}^{\rm vel,h}[v,\theta]\}_{j=1,5}$, $\{\mathcal{D}_{\pm,j}^{\rm temp}[v,\theta]\}_{j=1,5}$, and $\mathcal{D}_{1}[v]$.}

We only focus on the estimates of $\mathcal{D}_{\pm,1}^{\rm vel,h}[v,\theta]$ as the others are treated similarly.
We see from the Plancherel theorem that 
\begin{align}
    &
    \n{\nablah^{\alphah}\partial_{x_3}^k\mathcal{D}_{\pm,1}^{\rm vel,h}[v,\theta](t)}_{L^2(\R^3)}
    \\
    &\quad\leq{}
    C
    \int_0^t
    \n{e^{(t-\tau)\Deltah}|\nablah|\nablah^{\alphah}\partial_{x_3}^k(\vh \otimes \vh)(\tau)}_{L^2(\R^3)}d\tau\\
    &\quad\leq{}
    C
    \int_0^{\frac{t}{2}}
    (t-\tau)^{-\frac{1}{2}-\frac{1+|\alphah|}{2}}
    \n{\partial_{x_3}^k(\vh \otimes \vh)(\tau)}_{L^2_{x_3}L^1_{\xh}(\R^3)}d\tau\\
    &\qquad
    +
    C
    \int_{\frac{t}{2}}^{t}
    (t-\tau)^{-\frac{1}{2}}
    \n{\nablah^{\alphah}\partial_{x_3}^k(\vh \otimes \vh)(\tau)}_{L^2(\R^3)}d\tau.
\end{align}
Here, we see that 
\begin{align}
    &\int_0^{\frac{t}{2}}
    (t-\tau)^{-\frac{1}{2}-\frac{1+|\alphah|}{2}}
    \n{\partial_{x_3}^k(\vh \otimes \vh)(\tau)}_{L^2_{x_3}L^1_{\xh}(\R^3)}d\tau
    \\
    &\quad
    \leq 
    C
    \int_0^{\frac{t}{2}}
    (t-\tau)^{-\frac{1}{2}-\frac{1+|\alphah|}{2}}
    \sum_{m=0}^k
    \n{\partial_{x_3}^m\vh(\tau)}_{L^2(\R^3)}
    \n{\vh(\tau)}_{L^{\infty}_{x_3}L^2_{\xh}(\R^3)}d\tau
    \\
    &\quad
    \leq 
    C
    \int_0^{\frac{t}{2}}
    (t-\tau)^{-\frac{1}{2}-\frac{1+|\alphah|}{2}}
    \sum_{m=0}^k
    \n{\partial_{x_3}^m\vh(\tau)}_{L^2(\R^3)}
    \n{\vh(\tau)}_{L^2(\R^3)}^{\frac{1}{2}}
    \n{\partial_{x_3}\vh(\tau)}_{L^2(\R^3)}^{\frac{1}{2}}d\tau
    \\
    &\quad
    \leq 
     C 
    \n{(v,\theta)}_{Y_{\varepsilon}(0,T)}^2
    \int_0^{\frac{t}{2}}
    (t-\tau)^{-\frac{1}{2}-\frac{1+|\alphah|}{2}}
    (1+\tau)^{-1}
    d\tau
    \\
    &\quad
    \leq 
  C   
    \n{(v,\theta)}_{Y_{\varepsilon}(0,T)}^2
    t^{-\frac{1}{2}-\frac{1+|\alphah|}{2}}
    \log (2+t) 
\end{align}
and 
\begin{align}
    &
    \int_{\frac{t}{2}}^t
    (t-\tau)^{-\frac{1}{2}}
    \n{\nablah^{\alphah}\partial_{x_3}^k(\vh \otimes \vh)(\tau)}_{L^2(\R^3)}d\tau \\
    &\quad
    \leq 
    C
    \int_{\frac{t}{2}}^t
    (t-\tau)^{-\frac{1}{2}}
    \sum_{m=0}^k
    \n{\nablah^{\alphah}\partial_{x_3}^m \vh(\tau)}_{L^2(\R^3)}\n{ \vh(\tau)}_{L^{\infty}(\R^3)}d\tau\\
    &\qquad
    +
    C
    \int_{\frac{t}{2}}^t
    (t-\tau)^{-\frac{1}{2}}
    \sum_{m=0}^k
    \n{\partial_{x_3}^m \vh(\tau)}_{L^2(\R^3)}\n{ \nablah^{\alphah} \vh(\tau)}_{L^{\infty}(\R^3)}d\tau \label{L^2est:D_{j=1,5}}\\
    &\quad
    \leq 
     C  
    \n{(v,\theta)}_{Y_{\varepsilon}(0,T)}^2
    \int_{\frac{t}{2}}^t
    (t-\tau)^{-\frac{1}{2}}
    (1+\tau)^{-\frac{3}{2}-\frac{|\alphah|}{2}}d\tau\\
    &\quad
    \leq 
     C 
    \n{(v,\theta)}_{Y_{\varepsilon}(0,T)}^2
    t^{-\frac{1}{2}-\frac{1+|\alphah|}{2}}.
\end{align}
Hence, we obtain 
\begin{align}
    \n{\nablah^{\alphah}\partial_{x_3}^k\mathcal{D}_{\pm,1}^{\rm vel,h}[v,\theta](t)}_{L^2(\R^3)}
    \leq{}
     C 
    \n{(v,\theta)}_{Y_{\varepsilon}(0,T)}^2
    t^{-\frac{1}{2}-\frac{1+|\alphah|}{2}}
    \log (2+t)
\end{align}
for $2 \leq  t < T$.

\noindent
{\it Step 2. Estimates of $\{\mathcal{D}_{\pm,j}^{\rm vel,h}[v,\theta]\}_{j=2,6}$, $\{\mathcal{D}_{\pm,j}^{\rm temp}[v,\theta]\}_{j=2,6}$, and $\mathcal{D}_{2}[v]$.}

We only focus on the estimates of $\mathcal{D}_{\pm,2}^{\rm vel,h}[v,\theta]$ as the others are treated similarly.
First, we have 
\begin{align}
    \sum_{m=0}^{5}
    \n{\partial_{x_3}^mv(\tau)}_{L^2(\R^3)}
    ={}&
    \sum_{m=0}^{4}
    \n{\partial_{x_3}^mv(\tau)}_{L^2(\R^3)}
    +
    \n{\partial_{x_3}^5v(\tau)}_{L^2(\R^3)}\\
    \leq{}&
     C
    \n{(v,\theta)}_{Y_{\varepsilon}(0,T)}
    (1+\tau)^{-\frac{1}{2}}
    +
    \n{\partial_{x_3}^4v(\tau)}_{L^2(\R^3)}^{\frac{2}{3}}
    \n{\partial_{x_3}^7v(\tau)}_{L^2(\R^3)}^{\frac{1}{3}}\\
    \leq{}&
      C   
    \n{(v,\theta)}_{Y_{\varepsilon}(0,T)}
    (1+\tau)^{-\frac{1}{3}}
\end{align}
and 
\begin{align}
    \sum_{m=0}^{5}
    \n{\partial_{x_3}^mv_3(\tau)}_{L^2(\R^3)}
    ={}&
    \n{v_3(\tau)}_{L^2(\R^3)}
    +
    \sum_{m=1}^{5}
    \n{\partial_{x_3}^mv_3(\tau)}_{L^2(\R^3)}\\
    \leq{}&
     C
    \n{(v,\theta)}_{Y_{\varepsilon}(0,T)}(1+\tau)^{-\frac{3}{4}}
    +
      C
    \n{(v,\theta)}_{Y_{\varepsilon}(0,T)}(1+\tau)^{-1}
    \\
    \leq{}&
    C 
    \n{(v,\theta)}_{Y_{\varepsilon}(0,T)}(1+\tau)^{-\frac{3}{4}},
\end{align}
which implies 
\begin{align}
    \n{\partial_{x_3}^{ { k+1}  } (v_{\ell}v_3)(\tau)}_{L^2_{x_3}L^1_{\xh}(\R^3)}
    \leq{}&
    C
    \sum_{m=0}^{k+1}
    \n{\partial_{x_3}^mv(\tau)}_{L^2(\R^3)}
    \n{v_3(\tau)}_{L^{\infty}_{x_3}L^2_{\xh}(\R^3)}\\
    &
    +
    C
    \n{v(\tau)}_{L^{\infty}_{x_3}L^2_{\xh}(\R^3)}
    \sum_{m=0}^{k+1}
    \n{\partial_{x_3}^mv_3(\tau)}_{L^2(\R^3)}\\
    \leq{}&
    C
    \sum_{m=0}^{k+1}
    \n{\partial_{x_3}^mv(\tau)}_{L^2(\R^3)}
    \n{v_3(\tau)}_{L^2(\R^3)}^{\frac{1}{2}}
    \n{\partial_{x_3}v_3(\tau)}_{L^2(\R^3)}^{\frac{1}{2}}
    \\
    &
    +
    C
    \n{v(\tau)}_{L^2(\R^3)}^{\frac{1}{2}}
    \n{\partial_{x_3}v(\tau)}_{L^2(\R^3)}^{\frac{1}{2}}
    \sum_{m=0}^{k+1}
    \n{\partial_{x_3}^mv_3(\tau)}_{L^2(\R^3)}\\
    \leq{}&
    C
    \n{(v,\theta)}_{Y_{\varepsilon}(0,T)}^2
    (1+\tau)^{-\frac{13}{12}}
\end{align}
and 
\begin{align}
    &
    \n{\partial_{x_3}^{k+1}(v_{\ell}v_3)(\tau)}_{L^2(\R^3)}
    \\
    &\quad\leq{}
    C
    \sum_{\ell=0}^{5}
    \n{\partial_{x_3}^{\ell}v(\tau)}_{L^2(\R^3)}
    \n{v_3(\tau)}_{L^{\infty}(\R^3)}
    +
    C
    \n{v(\tau)}_{L^{\infty}(\R^3)}
    \sum_{\ell=0}^{5}
    \n{\partial_{x_3}^{\ell}v_3(\tau)}_{L^2(\R^3)}\\
    &\quad
    \leq{}
     C
    \n{(v,\theta)}_{Y_{\varepsilon}(0,T)}^2(1+\tau)^{-\frac{11}{6}}
    +
    C
    \n{(v,\theta)}_{Y_{\varepsilon}(0,T)}^2(1+\tau)^{-\frac{7}{4}}\\
    &\quad\leq{}
      C
    \n{(v,\theta)}_{Y_{\varepsilon}(0,T)}^2(1+\tau)^{-\frac{7}{4}}.
\end{align} 
Thus, we have 
\begin{align}
    &
    \n{\nablah^{\alphah}\partial_{x_3}^k\mathcal{D}_{\pm,2}^{\rm vel,h}[v,\theta](t)}_{L^2(\R^3)}
    \\
    &\quad\leq{}
    C
    \sum_{\ell=1}^3
    \int_0^t
    \n{e^{(t-\tau)\Deltah}\nablah^{\alphah}\partial_{x_3}^{k+1}(v_{\ell}v_3)(\tau)}_{L^2(\R^3)}d\tau\\
    &\quad\leq{}
    C
    \sum_{\ell=1}^3
    \int_0^{\frac{t}{2}}
    (t-\tau)^{-\frac{1}{2}-\frac{|\alphah|}{2}}
    \n{
    \partial_{x_3}^{k+1} 
    (v_{\ell}v_3)(\tau)}_{L^2_{x_3}L^1_{\xh}(\R^3)}d\tau\\
    &\qquad
    +
    C
    \sum_{\ell=1}^3
    \int_{\frac{t}{2}}^{t}
    (t-\tau)^{-\frac{|\alphah|}{2}}
    \n{\partial_{x_3}^{k+1}(v_{\ell}v_3)(\tau)}_{L^2(\R^3)}d\tau\\
    &\quad\leq{}
    C
    \n{(v,\theta)}_{Y_{\varepsilon}(0,T)}^2
    \int_0^{\frac{t}{2}}
    (t-\tau)^{-\frac{1}{2}-\frac{|\alphah|}{2}}
    (1+\tau)^{-\frac{13}{12}}d\tau\\
    &\qquad
    +
     C
    \n{(v,\theta)}_{Y_{\varepsilon}(0,T)}^2
    \int_{\frac{t}{2}}^{t}
    (t-\tau)^{-\frac{|\alphah|}{2}}
    (1+\tau)^{-\frac{7}{4}}d\tau\\
    &\quad\leq
     C
    \n{(v,\theta)}_{Y_{\varepsilon}(0,T)}^2
    t^{-\frac{1}{2}-\frac{|\alphah|}{2}}.
\end{align}
Hence, we see that
\begin{align}
    &
    \n{\nablah^{\alphah}\partial_{x_3}^k\mathcal{D}_{\pm,2}^{\rm vel,h}[v,\theta](t)}_{L^2(\R^3)}
    \leq
      C
    \n{(v,\theta)}_{Y_{\varepsilon}(0,T)}^2
    t^{-\frac{1}{2}-\frac{|\alphah|}{2}}
\end{align}
for $2 \leq  t < T$.

\noindent
{\it Step 3. Estimates of $\{\mathcal{D}_{\pm,j}^{\rm vel,h}[v,\theta]\}_{j=3,4}$, $\{\mathcal{D}_{\pm,j}^{\rm temp}[v,\theta]\}_{j=3,4}$, and $\{\mathcal{D}_{\pm,j}^{\rm vel,v}[v,\theta]\}_{j=2,3,4,6}$.}

We only focus on the estimates of $\mathcal{D}_{\pm,3}^{\rm vel,h}[v,\theta]$ as the others are treated similarly.
We see that
\begin{align}
    &
    \n{\nablah^{\alphah}\partial_{x_3}^k\mathcal{D}_{\pm,3}^{\rm vel,h}[v,\theta](t)}_{L^2(\R^3)}
    \\
    &\quad\leq{}
    C
    \sum_{\ell=1}^3
    \int_0^t
    \n{e^{(t-\tau)\Deltah}|\nablah|\nablah^{\alphah}\partial_{x_3}^k(v_{\ell}v_3)(\tau)}_{L^2(\R^3)}d\tau\\
    &\quad\leq{}
    C
    \sum_{\ell=1}^3
    \int_0^{\frac{t}{2}}
    (t-\tau)^{-\frac{1}{2}-\frac{1+|\alphah|}{2}}
    \n{\partial_{x_3}^k(v_{\ell}v_3)(\tau)}_{L^2_{x_3}L^1_{\xh}(\R^3)}d\tau\\
    &\qquad
    +
    C
    \sum_{\ell=1}^3
    \int_{\frac{t}{2}}^{t}
    (t-\tau)^{-\frac{1}{2}}
    \n{\nablah^{\alphah}\partial_{x_3}^k(v_{\ell}v_3)(\tau)}_{L^2(\R^3)}d\tau.
\end{align}
Here, we see that 
\begin{align}
    &
    \sum_{\ell=1}^3
    \int_0^{\frac{t}{2}}
    (t-\tau)^{-\frac{1}{2}-\frac{1+|\alphah|}{2}}
    \n{\partial_{x_3}^k(v_{\ell}v_3)(\tau)}_{L^2_{x_3}L^1_{\xh}(\R^3)}d\tau\\
    &\quad
    \leq
    C
    \int_0^{\frac{t}{2}}
    (t-\tau)^{-\frac{1}{2}-\frac{1+|\alphah|}{2}}
    \sum_{m=0}^k
    \n{\partial_{x_3}^mv(\tau)}_{L^2(\R^3)}
    \n{v_3(\tau)}_{L^{\infty}_{x_3}L^2_{\xh}(\R^3)}d\tau\\
    &\qquad
    +
    C
    \int_0^{\frac{t}{2}}
    (t-\tau)^{-\frac{1}{2}-\frac{1+|\alphah|}{2}}
    \n{v(\tau)}_{L^{\infty}_{x_3}L^2_{\xh}(\R^3)}
    \sum_{m=0}^k
    \n{\partial_{x_3}^mv_3(\tau)}_{L^2(\R^3)}d\tau\\
    &\quad
    \leq
    C
    \int_0^{\frac{t}{2}}
    (t-\tau)^{-\frac{1}{2}-\frac{1+|\alphah|}{2}}
    \sum_{m=0}^k
    \n{\partial_{x_3}^mv(\tau)}_{L^2(\R^3)}
    \n{v_3(\tau)}_{L^2(\R^3)}^{\frac{1}{2}}
    \n{\partial_{x_3}v_3(\tau)}_{L^2(\R^3)}^{\frac{1}{2}}
    d\tau\\
    &\qquad
    +
    C
    \int_0^{\frac{t}{2}}
    (t-\tau)^{-\frac{1}{2}-\frac{1+|\alphah|}{2}}
    \n{v(\tau)}_{L^2(\R^3)}^{\frac{1}{2}}
    \n{\partial_{x_3}v(\tau)}_{L^2(\R^3)}^{\frac{1}{2}}
    \sum_{m=0}^k
    \n{\partial_{x_3}^mv_3(\tau)}_{L^2(\R^3)}d\tau\\
    &\quad
    \leq
     C  
    \n{(v,\theta)}_{Y_{\varepsilon}(0,T)}^2
    \int_0^{\frac{t}{2}}
    (t-\tau)^{-\frac{1}{2}-\frac{1+|\alphah|}{2}}
    (1+\tau)^{-\frac{5}{4}}
    d\tau \\
    &\quad
    \leq
      C
    \n{(v,\theta)}_{Y_{\varepsilon}(0,T)}^2
    t^{-\frac{1}{2}-\frac{1+|\alphah|}{2}}
\end{align}
and
\begin{align}
    &
    \sum_{\ell=1}^3
    \int_{\frac{t}{2}}^{t}
    (t-\tau)^{-\frac{1}{2}}
    \n{\nablah^{\alphah}\partial_{x_3}^k(v_{\ell}v_3)(\tau)}_{L^2(\R^3)}d\tau\\
    &\quad
    \leq 
    C
    \int_{\frac{t}{2}}^t
    (t-\tau)^{-\frac{1}{2}}
    \sum_{m=0}^k
    \n{\nablah^{\alphah}\partial_{x_3}^m v(\tau)}_{L^2(\R^3)}
    \n{ v_3(\tau) }_{L^{\infty}(\R^3)}d\tau\\
    &\qquad
    +
    C
    \int_{\frac{t}{2}}^t
    (t-\tau)^{-\frac{1}{2}}
    \sum_{m=0}^k
    \n{\partial_{x_3}^m v(\tau)}_{L^2(\R^3)}
    \n{ \nablah^{\alphah}v_3(\tau) }_{L^{\infty}(\R^3)}d\tau\\
    &\qquad
    +
    C
    \int_{\frac{t}{2}}^t
    (t-\tau)^{-\frac{1}{2}}
    \n{ v(\tau)}_{L^{\infty}(\R^3)}
    \sum_{m=0}^k
    \n{ {\nablah^{\alphah} } \partial_{x_3}^mv_3(\tau) }_{L^{2}(\R^3)}d\tau\\
    &\qquad
    +
    C
    \int_{\frac{t}{2}}^t
    (t-\tau)^{-\frac{1}{2}}
    \n{ \nablah^{\alphah} v(\tau)}_{L^{\infty}(\R^3)}
    \sum_{m=0}^k
    \n{\partial_{x_3}^m v_3(\tau)}_{L^2(\R^3)}d\tau\\
    &\quad
    \leq 
     C
    \n{(v,\theta)}_{Y_{\varepsilon}(0,T)}^2
    \int_{\frac{t}{2}}^t
    (t-\tau)^{-\frac{1}{2}}
    (1+\tau)^{-\frac{7}{4}-\frac{|\alphah|}{2}}d\tau\\
    &\quad
    \leq 
      C
    \n{(v,\theta)}_{Y_{\varepsilon}(0,T)}^2
    t^{-\frac{3}{4}-\frac{1+|\alphah|}{2}}.
\end{align}
Thus, we have 
\begin{align}
    &
    \n{\nablah^{\alphah}\partial_{x_3}^k\mathcal{D}_{\pm,3}^{\rm vel,h}[v,\theta](t)}_{L^2(\R^3)}
    \leq 
     C
    \n{(v,\theta)}_{Y_{\varepsilon}(0,T)}^2
    t^{-\frac{1}{2}-\frac{1+|\alphah|}{2}}
\end{align}
for $2 \leq  t < T$.
Thus, we complete the proof.
\end{proof}
{
Next, we consider the decay estimates of $\mathcal{D}_1[v]$ and $\mathcal{D}_2[v]$ in $L^p$ with $2\leq p \leq \infty$. }
\begin{lemm}\label{lemm:decay-Duh}
Let $(v,\theta) \in Y_{\varepsilon}(0,T)$ with some $0<\varepsilon<1/4$ and $2<T\leq \infty$. Then, there exists an absolute positive constant $C$ such that for any $2 \leq p \leq \infty$ and $\alpha=(\alphah,\alpha_3)\in (\mathbb{N} \cup \{ 0\} )^2\times (\mathbb{N} \cup \{ 0\} )$ with $|\alpha| \leq 1$, 
it holds 
\begin{align}
    & \n{ \Grad^{\alpha}  \mathcal{D}_1[v](t)}_{L^p(\R^3)} 
    \leq 
    C 
    \n{(v,\theta)}_{Y_{\varepsilon}(0,T)}^2 
    t^{ -(1-\frac{1}{p})-\f{1+|\alphah|}{2} } \log t,  \label{Duh-D1}    \\
    & \n{ \Grad^{\alpha} \mathcal{D}_2[v](t)}_{L^p(\R^3)} 
    \leq 
    C 
    \n{(v,\theta)}_{Y_{\varepsilon}(0,T)}^2
    t^{ -(1-\frac{1}{p})-\f{|\alphah|}{2} } \label{Duh-D2}  
\end{align} 
for $2 \leq t < T$.
\end{lemm}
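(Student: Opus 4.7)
The plan is to prove both bounds by the standard Duhamel splitting $\int_0^t=\int_0^{t/2}+\int_{t/2}^t$ applied to $\mathcal{D}_1[v](t)=\int_0^t e^{(t-\tau)\Deltah}\Ph\nablah\cdot(\vh\otimes\vh)(\tau)\,d\tau$ and $\mathcal{D}_2[v](t)=\int_0^t e^{(t-\tau)\Deltah}\Ph\,\partial_{x_3}(\vh v_3)(\tau)\,d\tau$, combined with the two $2$D horizontal heat kernel bounds
\[
\n{e^{s\Deltah}\nablah^{\beta_{\rm h}}f}_{L^p(\R^3)}
\le Cs^{-(1-\frac{1}{p})-\frac{|\beta_{\rm h}|}{2}}\n{f}_{L^1_{\xh}L^p_{x_3}(\R^3)},
\qquad
\n{e^{s\Deltah}\nablah^{\beta_{\rm h}}f}_{L^p(\R^3)}
\le Cs^{-\frac{|\beta_{\rm h}|}{2}}\n{f}_{L^p(\R^3)},
\]
valid for $1\le p\le\infty$. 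On $[0,t/2]$, where $t-\tau\sim t$, I will place all derivatives onto the semigroup through the first (gain-of-summability) estimate; on $[t/2,t]$ I will instead use the second estimate with at most one horizontal derivative on the semigroup so that the singular factor $(t-\tau)^{-\kappa}$ remains integrable ($\kappa<1$), keeping the remaining derivatives on the quadratic term.

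Next, I will convert the mixed norm $L^1_{\xh}L^p_{x_3}$ of the nonlinearity into $Y_\varepsilon$-controlled quantities by Minkowski (exchanging $L^2_{\xh}$ and $L^{2p}_{x_3}$ for $p\ge 1$) together with the one-dimensional Gagliardo--Nirenberg inequality
\[
\n{g}_{L^{2p}_{x_3}}
\le C\n{g}_{L^2_{x_3}}^{\frac{1}{2}+\frac{1}{2p}}\n{\partial_{x_3}g}_{L^2_{x_3}}^{\frac{1}{2}-\frac{1}{2p}}.
\]
This yields $\n{\partial_{x_3}^{k}\vh(\tau)}_{L^2_{\xh}L^{2p}_{x_3}}\le C\n{(v,\theta)}_{Y_\varepsilon}(1+\tau)^{-1/2}$ uniformly in $0\le k\le 4$ and $2\le p\le\infty$ and, via the divergence-free identity $\partial_{x_3}v_3=-\nablah\cdot\vh$, the faster bounds $\n{v_3(\tau)}_{L^2_{\xh}L^{2p}_{x_3}}\le C\n{(v,\theta)}_{Y_\varepsilon}(1+\tau)^{-3/4}$ and $\n{\partial_{x_3}v_3(\tau)}_{L^2_{\xh}L^{2p}_{x_3}}\le C\n{(v,\theta)}_{Y_\varepsilon}(1+\tau)^{-1}$. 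On $[t/2,t]$ the nonlinearity will instead be estimated in $L^p(\R^3)$ by H\"older, using the $\sup_{2\le p\le\infty}$ norms of $\nabla^{\alpha}\vh$ and $v_3$ already built into $\n{\cdot}_{Y_\varepsilon}$.

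For \eqref{Duh-D1}, the $[0,t/2]$-integrand will reduce to $(t-\tau)^{-(1-\frac{1}{p})-\frac{1+|\alphah|}{2}}\n{(v,\theta)}_{Y_\varepsilon}^2(1+\tau)^{-1}$, the $(1+\tau)^{-1}$ factor being sharp because both factors of $\vh\otimes\vh$ decay at the $2$D heat rate; integration in $\tau$ produces exactly the $\log t$. The $[t/2,t]$ piece contributes strictly faster decay and is absorbed. For \eqref{Duh-D2}, each replacement of $\vh$ by either $v_3$ or $\partial_{x_3}v_3$ trades a $(1+\tau)^{-1/2}$ factor for $(1+\tau)^{-3/4}$ or $(1+\tau)^{-1}$, so the integrand on $[0,t/2]$ decays at least like $(1+\tau)^{-5/4}$, which is integrable at infinity and removes the logarithmic correction.

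The main technical obstacle is the case $|\alphah|=1$ on $[t/2,t]$ in \eqref{Duh-D1}: placing both horizontal derivatives on the semigroup would produce the non-integrable factor $(t-\tau)^{-1}$, so one $\nablah$ must be moved onto the nonlinearity and absorbed through $\n{\nablah\vh}_{L^{2p}}$ from the $\sup_{p}$ component of $\n{\cdot}_{Y_\varepsilon}$. Similar redistribution, again exploiting $\partial_{x_3}v_3=-\nablah\cdot\vh$ whenever advantageous and using the $\sum_{k\le 4}\n{\partial_{x_3}^{k}\vh}_{L^2}$ piece of $\n{\cdot}_{Y_\varepsilon}$ to supply the extra $x_3$-derivatives needed when $\alpha_3=1$, disposes of the remaining cases.
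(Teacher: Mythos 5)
Your skeleton is the same as the paper's --- split the Duhamel integral at $t/2$, use $L^1_{\xh}$-to-$L^p$ smoothing of $e^{(t-\tau)\Deltah}$ on $[0,t/2]$ (where the nonintegrable $(1+\tau)^{-1}$ decay of $\n{\vh\otimes\vh}$ produces the $\log t$ in \eqref{Duh-D1}, while the extra $(1+\tau)^{-1/4}$ gained from $v_3$ or $\partial_{x_3}v_3=-\nablah\cdot\vh$ removes it in \eqref{Duh-D2}) and non-smoothing bounds on $[t/2,t]$ with at most one horizontal derivative left on the semigroup. Where you genuinely diverge is in the tools: the paper reduces the $L^p$ claim to the $L^2$ endpoint (already in Lemma \ref{lemm:L^2}) and an $L^\infty$ endpoint proved via anisotropic Littlewood--Paley sums, handling the near-time piece with an $L^4$-Bernstein interpolation $\sup_{j,k}\n{\Deltahh_j\Deltav_k\cdot}_{L^4}^{3/4}\sup_{j,k}\n{\Deltahh_j\Deltav_k\p_{x_3}\cdot}_{L^4}^{1/4}$; you instead work directly in $L^p$ with Minkowski and one-dimensional Gagliardo--Nirenberg in the mixed norm $L^2_{\xh}L^{2p}_{x_3}$, and on $[t/2,t]$ you pay $(t-\tau)^{-1/2}$ for one derivative and charge the other to the $\sup_{2\le p\le\infty}$ components of $Y_\varepsilon$. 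Your rate count checks out (e.g.\ for $|\alphah|=1$ the near part gives $t^{1/2}\cdot(1+t)^{-5/2+1/p}=t^{-(1-\frac1p)-1}$), and this is arguably more elementary than the paper's Besov interpolation.

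Two points need more care than your sketch gives them. First, every occurrence of the projection $\Ph$ sits inside an $L^1_{\xh}\to L^p_{\xh}$ or $L^\infty\to L^\infty$ bound where the Riesz-type multiplier alone is unbounded; you must estimate the composed kernel of $e^{s\Deltah}\nablah^{\beta}\Ph$ (an $L^1(\R^2)$-dilation by scaling), or work in the anisotropic Besov classes where Lemma \ref{lemm:M-bdd} applies, as the paper does. Second, on $[t/2,t]$ for $\mathcal{D}_2$ with $\alpha_3=1$ you need $\n{\partial_{x_3}^{2}(\vh v_3)(\tau)}_{L^p}$ for all $2\le p\le\infty$; second vertical derivatives in $L^\infty$ are not in the $Y_\varepsilon$-norm and must be manufactured by interpolating the first-derivative decay against the uniform $H^8$ bound (exactly the auxiliary estimates $\n{\p_{x_3}^{2}v_3}_{L^\infty}\le C(1+\tau)^{-3/2}$, $\n{\p_{x_3}^{3}v_3}_{L^\infty}\le C(1+\tau)^{-1}$ in the paper's proof); your ``similar redistribution'' remark does not carry this out, and relatedly your claimed uniform bound $\n{\partial_{x_3}^{k}\vh}_{L^2_{\xh}L^{2p}_{x_3}}\le C(1+\tau)^{-1/2}$ fails as stated at $k=4$ (it would require decaying control of $\partial_{x_3}^{5}\vh$ in $L^2$, which interpolation with $H^8$ only gives at rate $(1+\tau)^{-1/3}$), though only $k\le 2$ is actually used. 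Neither issue is fatal, but both must be filled in for a complete proof.
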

\begin{proof}
We first consider \eqref{Duh-D1}. 
Based on Lemma \ref{lemm:L^2} and the interpolation inequality, it suffices to consider the $L^{\infty}$-estimate. Observe that
\begin{align}
  \n{ \nabla^{\alpha}
 \mathcal{D}_1[v](t)
 }_{ L^{\infty}(\R^3) } 
 & 
 \leq 
 \int_0^{\f{t}{2}}
 \n{
 \nablah^{\alphah} \nablah e^{(t-\tau)\Deltah}
 \p_{x_3}^{\alpha_3} \Ph(\vh \otimes \vh)(\tau)
 }_{ L^{\infty}( \R^3 ) } 
 d\tau \\
 & \quad 
 +
 \int_{\f{t}{2}}^{t}
 \n{
    \nablah^{\alphah} 
    e^{(t-\tau)\Deltah}
    \nablah
    \p_{x_3}^{\alpha_3}
    \Ph(\vh \otimes \vh)(\tau) 
 }_{ L^{\infty}(\R^3) } 
 d\tau \\
 &
 =:
 I_{1}(t)+ I_{2}(t). 
\end{align}  
For $I_{1}(t)$, we see  
\begin{align}
I_{1}(t) &  \leq 
C
\int_0^{\f{t}{2}}
\sum_{j\in \mathbb{Z}} 
e^{ -c 2^{2j} (t-\tau) }
2^{ j(1+|\alphah|)  }
\sum_{k \in \mathbb{Z}} 
\n{
 \Deltahh_j \Deltav_k  \p_{x_3}^{\alpha_3} \Ph(\vh \otimes \vh)(\tau)  
}_{ L^{\infty}(\R^3) } 
d\tau \\
& \leq 
C
\int_0^{\f{t}{2}}
\sum_{j\in \mathbb{Z}} 
e^{ -c 2^{2j} (t-\tau) }
2^{ j(1+|\alphah|)  } 2^{2j}
\sum_{k \in \mathbb{Z}} 
2^{k}
\n{
 \Deltahh_j \Deltav_k  \p_{x_3}^{\alpha_3} \Ph(\vh \otimes \vh)(\tau)  
}_{ L^{1}(\R^3) }  
d\tau \\
& \leq 
C
\int_0^{\f{t}{2}} 
(t-\tau)^{ -1-\f{1+|\alphah|}{2}  }
 \n{
 (\vh \otimes \vh)(\tau)
 }_{L^1_{\xh}W^{3,1}_{x_3}(\R^3)}
d\tau \\
& \leq 
C
\int_0^{\f{t}{2}} 
(t-\tau)^{ -1-\f{1+|\alphah|}{2}  }
\sum_{\ell=0}^{3}
\sum_{m=0}^{\ell}
\n{
\p_{x_3}^{m} \vh(\tau)
}_{ L^{2}(\R^3) }  
\n{
\p_{x_3}^{\ell-m} \vh(\tau)
}_{ L^{2}(\R^3) }  
d\tau  \\
&
\leq 
 C
    \n{(v,\theta)}_{Y_{\varepsilon}(0,T)}^2
\int_0^{\f{t}{2}} 
(t-\tau)^{ -1-\f{1+|\alphah|}{2}  }
(1+\tau)^{-1} 
d\tau \\
& 
\leq 
 C
    \n{(v,\theta)}_{Y_{\varepsilon}(0,T)}^2
    t^{ -1-\f{1+|\alphah|}{2}  }
    \log t. 
\end{align}  
For $I_{2}(t)$, we see  
\begin{align}
I_{2}(t) &  \leq 
C
\int_{\f{t}{2}}^{t} 
\sum_{j\in \mathbb{Z}} 
e^{ -c 2^{2j} (t-\tau) }
2^{ j  |\alphah| }
\sum_{k \in \mathbb{Z}} 
\n{
 \Deltahh_j \Deltav_k  \p_{x_3}^{\alpha_3}\nablah \Ph(\vh \otimes \vh)(\tau)  
}_{ L^{\infty}(\R^3) } 
d\tau  \\
& 
\leq 
 C
 \int_{\f{t}{2}}^{t} 
\sum_{j\in \mathbb{Z}} 
e^{ -c 2^{2j} (t-\tau) }
2^{ j  |\alphah| } 2^{\f{j}{2}}
\sum_{k \in \mathbb{Z}} 
2^{\f{k}{4}}
\n{
 \Deltahh_j \Deltav_k  \p_{x_3}^{\alpha_3}\nablah \Ph(\vh \otimes \vh)(\tau)  
}_{ L^{4}(\R^3) } 
d\tau \\
& 
\leq 
 C
 \int_{\f{t}{2}}^{t} 
 (t-\tau)^{ -\f{1}{4}-\f{|\alphah|}{2}    }
 \sup_{j\in \mathbb{Z}} 
 \sum_{k \in \mathbb{Z}} 
 2^{\f{k}{4}}
\n{
 \Deltahh_j \Deltav_k  \p_{x_3}^{\alpha_3}\nablah \Ph(\vh \otimes \vh)(\tau)  
}_{ L^{4}(\R^3) }  \\ 
&
\leq 
 C
 \int_{\f{t}{2}}^{t} 
 (t-\tau)^{ -\f{1}{4}-\f{|\alphah|}{2}    }
  \sup_{j,k \in \mathbb{Z}} 
  \n{
 \Deltahh_j \Deltav_k  \p_{x_3}^{\alpha_3}\nablah \Ph(\vh \otimes \vh)(\tau)  
}_{ L^{4}(\R^3) } ^{\f{3}{4}} \\
&
\qquad \qquad 
\times
\sup_{j,k \in \mathbb{Z}} 
  \n{
 \Deltahh_j \Deltav_k  \p_{x_3}^{\alpha_3+1}\nablah \Ph(\vh \otimes \vh)(\tau)  
}_{ L^{4}(\R^3) } ^{\f{1}{4}} 
d\tau \\
&
\leq 
C
    \n{(v,\theta)}_{Y_{\varepsilon}(0,T)}^2
 \int_{\f{t}{2}}^{t} 
 (t-\tau)^{ -\f{1}{4}-\f{|\alphah|}{2}    }
 (1+\tau)^{-\f{9}{4}}
 d\tau\\
 &
\leq 
C
    \n{(v,\theta)}_{Y_{\varepsilon}(0,T)}^2
 t^{ -1-\f{1+|\alphah|}{2}  }.
\end{align}  
Combining the two estimates above, we obtain
\begin{align}
  \n{ \nabla^{\alpha}
 \mathcal{D}_1[v](t)
 }_{ L^{\infty}(\R^3) } 
 \leq 
  C 
    \n{(v,\theta)}_{Y_{\varepsilon}(0,T)}^2 
    t^{ -1-\f{1+|\alphah|}{2} } \log t
\end{align}  
for all $\alpha=(\alphah,\alpha_3)$ and $|\alpha|\leq 1$.

Next, we handle \eqref{Duh-D2}. As before, it suffices to estimate in $L^{\infty}$. It follows that 
\begin{align}
  \n{ \nabla^{\alpha}
 \mathcal{D}_2[v](t)
 }_{ L^{\infty}(\R^3) } 
 & 
 \leq 
 \int_0^{\f{t}{2}}
 \n{
 \nablah^{\alphah}  e^{(t-\tau)\Deltah}
 \p_{x_3}^{\alpha_3} \p_{x_3} \Ph(v_3 \vh)(\tau) 
 }_{ L^{\infty}(\R^3) } 
 d\tau \\
 & \quad 
 +
 \int_{\f{t}{2}}^{t}
 \n{
   \nablah^{\alphah} 
   e^{(t-\tau)\Deltah}
    \p_{x_3}^{\alpha_3} 
    \p_{x_3}
    \Ph(v_3  \vh)(\tau)  
 }_{ L^{\infty}(\R^3) } 
 d\tau \\
 &
 =:
 J_{1}(t)+ J_{2}(t). 
\end{align}  
For $J_1(t)$, we estimate through
\begin{align}
    J_{1}(t) 
    &  \leq 
    C
    \int_0^{\f{t}{2}}
    \sum_{j\in \mathbb{Z}} 
    e^{ -c 2^{2j} (t-\tau) }
    2^{ j |\alphah|  }
    \sum_{k \in \mathbb{Z}} 
    \n{
    \Deltahh_j \Deltav_k 
    \p_{x_3}^{\alpha_3} \p_{x_3} \Ph(v_3  \vh)(\tau)  
    }_{ L^{\infty}(\R^3) } 
    d\tau \\
    & \leq 
    C
    \int_0^{\f{t}{2}}
    \sum_{j\in \mathbb{Z}} 
    e^{ -c 2^{2j} (t-\tau) }
    2^{ j |\alphah|  } 2^{2j}
    \sum_{k \in \mathbb{Z}} 
    2^{k}
    \n{
    \Deltahh_j \Deltav_k  \p_{x_3}^{\alpha_3} \p_{x_3} \Ph(v_3  \vh)(\tau)   
    }_{ L^{1}(\R^3) }  
    d\tau \\
    & \leq 
    C
    \int_0^{\f{t}{2}} 
    (t-\tau)^{ -1-\f{|\alphah|}{2}  }
    \n{
    (v_3  \vh)(\tau) 
    }_{L^1_{\xh}W^{4,1}_{x_3}(\R^3)}
    d\tau \\
    & \leq 
    C
    \int_0^{\f{t}{2}} 
    (t-\tau)^{ -1-\f{|\alphah|}{2}  }
    \sum_{\ell=0}^{4}
    \sum_{m=0}^{\ell}
    \n{
    \p_{x_3}^{m} v_3 (\tau)
    }_{ L^{2}(\R^3) }  
    \n{
    \p_{x_3}^{\ell-m} \vh(\tau)
    }_{ L^{2}(\R^3) }  
    d\tau  \\
    &
    \leq 
    C
    \n{(v,\theta)}_{Y_{\varepsilon}(0,T)}^2
    \int_0^{\f{t}{2}} 
    (t-\tau)^{ -1-\f{|\alphah|}{2}  }
    (1+\tau)^{   - \f{5}{4}  }   
    d\tau \\
    & 
    \leq 
    C
    \n{(v,\theta)}_{Y_{\varepsilon}(0,T)}^2
    t^{ -1-\f{|\alphah|}{2}  }. 
\end{align}  
To treat $J_2(t)$, we notice that
\begin{align}
    \n{\p_{x_3}^{2} v_3(\tau) }
    _{ L^{\infty}(\R^3)     }
    &
    \leq 
    C
    \n{\p_{x_3} v_3(\tau) }
    _{ L^{\infty}(\R^3)     }^{\f{3}{4}}
    \n{\p_{x_3}^{5} v_3(\tau) }^{\f{1}{4}}
    _{ L^{\infty}(\R^3)     } 
    \leq 
    C
    \n{(v,\theta)}_{Y_{\varepsilon}(0,T)}
    (1+\tau)^{-\f{3}{2}}, \\
     \n{\p_{x_3}^{3} v_3(\tau) }
    _{ L^{\infty}(\R^3)     }
    &
    \leq 
    C
    \n{\p_{x_3} v_3(\tau) }
    _{ L^{\infty}(\R^3)     }^{\f{1}{2}}
    \n{\p_{x_3}^{5} v_3(\tau) }^{\f{1}{2}}
    _{ L^{\infty}(\R^3)     } 
    \leq 
    C
    \n{(v,\theta)}_{Y_{\varepsilon}(0,T)}
    (1+\tau)^{-1},  \\
    \n{
    \p_{x_3}^{3}
    (v_3\vh)(\tau)  
    }_{ L^{\infty}(\R^3)     }
    &
    \leq 
    C \sum_{\ell=0}^{3} \sum_{m=0}^{\ell}
    \n{ \p_{x_3}^{m}v_3(\tau)   }_{ L^{\infty}(\R^3)     }
    \n{ \p_{x_3}^{\ell-m}v_3(\tau)   }_{ L^{\infty}(\R^3)     }\\
    & \leq
    C
    \n{(v,\theta)}_{Y_{\varepsilon}(0,T)}^2
    (1+\tau)^{ -2  }, \\
    \n{
    \p_{x_3}^{3}
    (v_3\vh)(\tau)  
    }_{ L^{4}(\R^3)     }
    &
    \leq 
    C
    \n{
    \p_{x_3}^{3}
    (v_3\vh)(\tau)  
    }_{ L^{2}(\R^3)     }^{\f{1}{2}}
    \n{
    \p_{x_3}^{3}
    (v_3\vh)(\tau)  
    }_{ L^{\infty}(\R^3)     }^{\f{1}{2}} \\
    &
    \leq 
    C
    \n{(v,\theta)}_{Y_{\varepsilon}(0,T)}^2
    (1+\tau)^{-\f{15}{8}}.
\end{align} 
Thus,
\begin{align}
J_{2}(t) &  \leq 
C
\int_{\f{t}{2}}^{t} 
\sum_{j\in \mathbb{Z}} 
e^{ -c 2^{2j} (t-\tau) }
2^{ j  |\alphah| }
\sum_{k \in \mathbb{Z}} 
\n{
 \Deltahh_j \Deltav_k  \p_{x_3}^{\alpha_3}
 \p_{x_3}
 \Ph(v_3  \vh)(\tau)   
}_{ L^{\infty}(\R^3) } 
d\tau  \\
& 
\leq 
 C
 \int_{\f{t}{2}}^{t} 
\sum_{j\in \mathbb{Z}} 
e^{ -c 2^{2j} (t-\tau) }
2^{ j  |\alphah| } 2^{\f{j}{2}}
\sum_{k \in \mathbb{Z}} 
2^{\f{k}{4}}
\n{
 \Deltahh_j \Deltav_k  \p_{x_3}^{\alpha_3} \p_{x_3}\Ph(v_3  \vh)(\tau)    
}_{ L^{4}(\R^3) } 
d\tau \\
& 
\leq 
 C
 \int_{\f{t}{2}}^{t} 
 (t-\tau)^{ -\f{1}{4}-\f{|\alphah|}{2}    }
 \sup_{j\in \mathbb{Z}} 
 \sum_{k \in \mathbb{Z}} 
 2^{\f{k}{4}}
\n{
 \Deltahh_j \Deltav_k  \p_{x_3}^{\alpha_3} \p_{x_3}\Ph(v_3 \vh)(\tau)   
}_{ L^{4}(\R^3) }  \\  
&
\leq 
 C
 \int_{\f{t}{2}}^{t} 
 (t-\tau)^{ -\f{1}{4}-\f{|\alphah|}{2}    }
  \sup_{j,k \in \mathbb{Z}} 
  \n{
 \Deltahh_j \Deltav_k  \p_{x_3}^{\alpha_3} \p_{x_3} \Ph(v_3  \vh)(\tau)  
}_{ L^{4}(\R^3) } ^{\f{3}{4}} \\
&
\qquad \qquad 
\times
\sup_{j,k \in \mathbb{Z}} 
  \n{
 \Deltahh_j \Deltav_k  \p_{x_3}^{\alpha_3+1} \p_{x_3} \Ph(v_3  \vh)(\tau)   
}_{ L^{4}(\R^3) } ^{\f{1}{4}} 
d\tau \\
& 
\leq 
C
    \n{(v,\theta)}_{Y_{\varepsilon}(0,T)}^2
 \int_{\f{t}{2}}^{t} 
 (t-\tau)^{ -\f{1}{4}-\f{|\alphah|}{2}    }
 (1+\tau)^{-\f{15}{8}} 
 d\tau\\
 &
\leq 
C
    \n{(v,\theta)}_{Y_{\varepsilon}(0,T)}^2
 t^{ -1-\f{|\alphah|}{2}  }.
\end{align}  
Hence, we obtain  
\begin{align}
  \n{ \nabla^{\alpha}
 \mathcal{D}_2[v](t)
 }_{ L^{\infty}(\R^3) } 
 \leq 
  C
    \n{(v,\theta)}_{Y_{\varepsilon}(0,T)}^2 
    t^{ -1-\f{|\alphah|}{2} }
\end{align}  
for all $\alpha=(\alphah,\alpha_3)$ and $|\alpha|\leq 1$. 
This completes the proof of Lemma \ref{lemm:decay-Duh}. 
\end{proof}

Finally, we consider the remaining Duhamel estimates in $L^p$ with $2\leq p \leq \infty$. 
\begin{lemm}\label{lemm:vel_h-temp}
Let $(v,\theta) \in Y_{\varepsilon}(0,T)$ with some $0<\varepsilon<1/4$ and $2<T\leq \infty$. Then, there exists a positive constant $C_{\ep}$ such that for any $2 \leq p \leq \infty$ and $\alpha=(\alphah,\alpha_3)\in (\mathbb{N} \cup \{ 0\} )^2\times (\mathbb{N} \cup \{ 0\} )$ with $|\alpha| \leq 1$, 
it holds 
\begin{align}
    &
    \begin{aligned}
    &
    \sum_{j=1,5}
    \sp{\n{\nabla^{\alpha}\mathcal{D}_{\pm,j}^{\rm vel,h}[v,\theta](t)}_{L^p(\R^3)}+\n{\nabla^{\alpha}\mathcal{D}_{\pm,j}^{\rm temp}[v,\theta](t)}_{L^p(\R^3)} 
    } \\
    &\quad
    \leq{}
    C
    \n{(v,\theta)}_{Y_{\varepsilon}(0,T)}^2
    t^{
    -(1-\frac{1}{p})
    - \frac{1+|\alphah|}{2}  
    } 
    \max
    \Mp{ t^{-\frac{3}{4}(1-\frac{2}{p})}\log t,1},
    \end{aligned}\\
    &
    \begin{aligned}
    &\sum_{j=2,6}
    \sp{\n{\nabla^{\alpha}\mathcal{D}_{\pm,j}^{\rm vel,h}[v,\theta](t)}_{L^p(\R^3)}+\n{\nabla^{\alpha}\mathcal{D}_{\pm,j}^{\rm temp}[v,\theta](t)}_{L^p(\R^3)}} \\ 
    &\quad
    \leq 
    C
    \n{(v,\theta)}_{Y_{\varepsilon}(0,T)}^2
  {  t^{-(1-\frac{1}{p})-{\frac{|\alphah|}{2}}-{A_0(p)}}   
       }   
     ,
    \end{aligned} \\
    &
    \begin{aligned}
    &
    \sum_{j=3,4}
    \sp{\n{\nabla^{\alpha}\mathcal{D}_{\pm,j}^{\rm vel,h}[v,\theta](t)}_{L^p(\R^3)}+\n{\nabla^{\alpha}\mathcal{D}_{\pm,j}^{\rm temp}[v,\theta](t)}_{L^p(\R^3)}}\\
    &\quad
    \leq 
    C
    \n{(v,\theta)}_{Y_{\varepsilon}(0,T)}^2 
  t^{
    -(1-\frac{1}{p})
    - \frac{1+|\alphah|}{2}  
   { -A_0(p)  } 
    } 
    \end{aligned} \\
    &
    \begin{aligned} 
   & \sum_{j=1,5}
    \n{\nabla^{\alpha}\mathcal{D}_{\pm,j}^{\rm vel,v}[v,\theta](t)}_{L^p(\R^3)}  \\  
    &\quad 
    \leq 
     C
    \n{(v,\theta)}_{Y_{\varepsilon}(0,T)}^2
    t^{
    -(1-\frac{1}{p})
    - \frac{1+|\alpha|}{2}  
    } 
 {   \max
    \Mp{ t^{-\frac{3}{4}(1-\frac{2}{p})}\log t,1} }
    \end{aligned}   \\
   &  \begin{aligned} 
   & \sum_{j=2,3,4,6}
    \n{\nabla^{\alpha}\mathcal{D}_{\pm,j}^{\rm vel,v}[v,\theta](t)}_{L^p(\R^3)}  \\  
    &\quad
    \leq 
     C
    \n{(v,\theta)}_{Y_{\varepsilon}(0,T)}^2
    t^{
    -(1-\frac{1}{p})
    - \frac{1+|\alpha|}{2}  
 {   -A_0(p)  }
     } 
    \end{aligned}  
\end{align}
for $2 \leq  t < T$.
\end{lemm}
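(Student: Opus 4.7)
My plan is to adapt the splitting strategy used in Lemmas \ref{lemm:B}, \ref{lemm:L^2}, and \ref{lemm:decay-Duh}: write each Duhamel integral as $\int_0^t = \int_0^{t/2} + \int_{t/2}^t$ and estimate the two regions by different tools. On $[0,t/2]$ one has $t-\tau \sim t$, so the full strength of the dispersive $L^1 \to L^p$ estimate of Corollary \ref{cor:disp} is available, with the zeroth-order Fourier multipliers appearing in each $Q_\pm^{\rm vel,h}, Q_\pm^{\rm vel,v}, Q_\pm^{\rm temp}$ (factors like $\nablah/|\nablah|$, $\partial_{x_3}/|\nabla|$, $|\nablah|^2/|\nabla|^2$) absorbed by Lemma \ref{lemm:M-bdd}; the bilinear source is measured in $L^1_{\xh} W^{s,1}_{x_3}$ via $L^1_{\xh} L^2_{x_3} \hookleftarrow L^2_{\xh} L^2_{x_3} \cdot L^2_{\xh} L^\infty_{x_3}$ type H\"older inequalities, combined with the vertical-derivative $L^2$ decay packaged in the $Y_\varepsilon$-norm. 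On $[t/2,t]$ the dispersive singularity $(t-\tau)^{-\frac{3}{4}(1-2/p)}$ can fail integrability for large $p$, so one writes $e^{(t-\tau)\Deltah}e^{\pm i(t-\tau)|\nablah|/|\nabla|} = e^{(t-\tau)\Deltah/2} \cdot \bigl(e^{(t-\tau)\Deltah/2}e^{\pm i(t-\tau)|\nablah|/|\nabla|}\bigr)$, uses the horizontal heat semigroup to push $L^2 \to L^p$ through anisotropic Bernstein inequalities (Lemma \ref{lemm:Bern}), and bounds the remaining dispersive factor by Plancherel, controlling the bilinear source in $L^2$ through Lemma \ref{lemm:L^2}.

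The six indices $j$ group into three structural cases. For $j=1,5$, the nonlinearities $\nablah\cdot(\vh\otimes\vh)$ and $\nablah\cdot(\vh\theta)$ carry one outer horizontal derivative; the low-time piece produces $(1+\tau)^{-1}$ from $\n{\vh}_{L^2}\n{\vh}_{L^2}$ or $\n{\vh}_{L^2}\n{\theta}_{L^2}$, which integrates to a logarithm, accounting for the $\log t$ inside the $\max$. For $j=3,4$, the nonlinearities $\nablah\cdot(v_3\vh)$ and $\partial_{x_3}(v_3^2)$ benefit from the enhanced $v_3$-decay $(1+\tau)^{-5/4}$ supplied by the $Y_\varepsilon$-norm, which removes the logarithm and allows the full dispersive improvement $A_0(p)$. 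For $j=2,6$, the nonlinearities $\partial_{x_3}(\vh v_3)$ and $\partial_{x_3}(v_3\theta)$ carry a vertical rather than horizontal derivative, so the horizontal heat kernel cannot supply an extra $\tfrac{1}{2}$-order decay; this is why the rate degrades to $-(1-1/p) - |\alphah|/2 - A_0(p)$, losing the $\tfrac{1}{2}$ compared with $j=1,3,4,5$.

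The extra $\tfrac{1}{2}$ gain in the vertical-velocity Duhamels $\mathcal{D}_{\pm,j}^{\rm vel,v}$ (decay $(1+|\alpha|)/2$ instead of $(1+|\alphah|)/2$) is obtained from the structural identity $\partial_{x_3}Q_\pm^{\rm vel,v} = |\nablah|\widetilde{Q_\pm^{\rm vel,v}}$ already used for the linear analysis in Theorem \ref{thm:lin}: every occurrence of $\partial_{x_3}$ acting on a $Q_\pm^{\rm vel,v}$-term can be exchanged for a horizontal $|\nablah|$, producing one additional half order of horizontal heat decay from $e^{(t-\tau)\Deltah}$. Together with the horizontal gradient already present in each bilinearity this accounts for the stated exponent.

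The main obstacle is the high-time estimate on $[t/2,t]$ for $j=3,4$ (and their $\mathcal{D}_{\pm,j}^{\rm vel,v}$ analogues), where one must genuinely extract the dispersive improvement $A_0(p)$ while keeping the $\tau$-integral convergent at $\tau = t$. This forces an interpolation between the Plancherel $L^2 \to L^2$ bound and the dispersive $L^1 \to L^\infty$ bound of Lemma \ref{dis-est-pm}, with the interpolation exponent chosen so that the resulting singularity in $(t-\tau)$ is just integrable against the $L^2$-decay of the bilinear source furnished by Lemma \ref{lemm:L^2}. Balancing this interpolation against the enhanced-dissipation trade $\partial_{x_3} \leftrightarrow |\nablah|$ for the $v_3$-Duhamels, while avoiding double-counting of that gain, is the delicate technical core of the argument.
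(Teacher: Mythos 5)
Your overall architecture coincides with the paper's: split the Duhamel integral at $t/2$, apply the dispersive $L^1$--$L^p$ bound of Corollary \ref{cor:disp} on $[0,t/2]$ with the zeroth-order multipliers absorbed by Lemma \ref{lemm:M-bdd} and the source measured in $L^1_{\xh}W^{s,1}_{x_3}$ through the $Y_\varepsilon$-norm, group the six pieces exactly as you do (logarithm for $j=1,5$ from the $(1+\tau)^{-1}$ source, extra $\tfrac12$ lost for $j=2,6$ because the outer derivative is vertical, enhanced $v_3$-decay for $j=3,4$), and trade $\partial_{x_3}Q_{\pm}^{\rm vel,v}=|\nablah|\widetilde{Q_{\pm}^{\rm vel,v}}$ to gain the extra half order for the vertical-velocity Duhamels. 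All of that is the paper's proof.

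The one place where your plan departs from the paper --- and where, as written, it would not close --- is the high-time region $[t/2,t]$ for the terms carrying $A_0(p)$. You propose to ``genuinely extract the dispersive improvement'' there by interpolating the Plancherel $L^2\to L^2$ bound with the dispersive $L^1\to L^\infty$ bound of Lemma \ref{dis-est-pm}. At the $L^1\to L^\infty$ endpoint that bound carries the singularity $(t-\tau)^{-1-\frac34}$ (Lemma \ref{disp-est-p}), which is not integrable at $\tau=t$, so for large $p$ you are forced to retreat to a strictly weaker interpolation exponent, and it is then unclear you recover the stated rate; moreover the interpolated bound needs the source in $L^{p'}$ with $p'<2$, a quantity not packaged in $Y_\varepsilon(0,T)$. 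The paper extracts \emph{no} dispersion whatsoever on $[t/2,t]$: it estimates the near-time piece only at $p=2$ (Plancherel plus the heat factor $(t-\tau)^{-|\alphah|/2}$ or $(t-\tau)^{-1/2}$) and at $p=\infty$ (anisotropic Bernstein $2^{j}2^{k/2}$ summation against $L^2$-based and $\dot{\mathcal{B}}_{2,1}^{1,\frac12}$ norms, via Lemma \ref{lemm:para}), and interpolates these two non-dispersive bounds. The resulting high-time contribution for $j=2,\dots,6$ decays exactly $t^{-\frac14}$ faster than the $j=1,5$ baseline, uniformly in $p$, because the \emph{source} contains $v_3$ and inherits the enhanced decay from the $Y_\varepsilon$-norm --- this, and not any residual dispersion of the propagator, is the origin of the cap $\tfrac14$ in $A_0(p)=\min\{\frac34(1-\frac2p),\frac14\}$; the $\frac34(1-\frac2p)$ branch comes solely from the far-time piece. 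Replacing your high-time step by this $L^2$/$L^\infty$ interpolation of non-dispersive bounds removes the delicate balancing you flag and is what actually produces the stated exponents.
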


\begin{proof} 

\noindent
{\it Step 1. Estimates of $\{\mathcal{D}_{\pm,j}^{\rm vel,h}[v,\theta]\}_{j=1,5}$, $\{\mathcal{D}_{\pm,j}^{\rm temp}[v,\theta]\}_{j=1,5}$.}

We only focus on the estimates of $\mathcal{D}_{\pm,1}^{\rm vel,h}[v,\theta]$ as the others are treated similarly.
It follows from Theorem \ref{thm:lin} that  
\begin{align}
    &
    \n{  \nabla^{\alpha}  
    \mathcal{D}_{\pm,1}^{\rm vel,h}[v,\theta](t)}_{L^p(\R^3)}  \\ 
    & \quad 
    \leq
    C
    \int_0^{\frac{t}{2}}
    (t-\tau)^{-(1-\frac{1}{p})-\frac{3}{4}(1-\frac{2}{p})-\frac{1+|\alphah|}{2}}
    \n{(\vh \otimes \vh)(\tau)}_{L^1_{\xh} W^{3+\alpha_3,1}_{x_3}(\R^3) } 
    d\tau \\  
    &
    \qquad  
    +
    C
    \n{
    \int_{\frac{t}{2}}^t
    e^{(t-\tau)\Deltah}
    e^{\pm i (t-\tau)\frac{|\nablah|}{|\nabla|}}
    \Grad^{\alpha}   
    \frac{\nablah}{|\nablah|}
    \frac{\partial_{x_3}^2}{|\nabla|^2}
    \sp{\frac{\nablah}{|\nablah|} \cdot \sp{\nablah \cdot (\vh \otimes \vh)(\tau)}}
    d\tau
    }_{L^p(\R^3)}\\
    &\quad
    =: I_{1,p}(t) + I_{2,p}(t).
\end{align}
For the estimate of $I_{1,p}(t)$, it holds by Corollary \ref{cor:disp} that
\begin{align}
    I_{1,p}(t)
    \leq{}&
    C
    t^{-(1-\frac{1}{p})-\frac{3}{4}(1-\frac{2}{p})-\frac{1+|\alphah|}{2}}
    \int_0^{\frac{t}{2}}  
    \sum_{k=0}^{3+\alpha_3} \sum_{\ell=0}^{k}
    \n{\partial_{x_3}^{k-\ell}\vh(\tau)}_{L^2(\R^3)}
    \n{\partial_{x_3}^{\ell}\vh(\tau)}_{L^2(\R^3)} 
    d\tau \\
    \leq{}&
       C
    \n{(v,\theta)}_{Y_{\varepsilon}(0,T)}^2
    t^{-(1-\frac{1}{p})-\frac{3}{4}(1-\frac{2}{p})-\frac{1+|\alphah|}{2}}
    \int_0^{\frac{t}{2}}
    (1+\tau)^{-1}d\tau\\
    \leq{}&
    C
    \n{(v,\theta)}_{Y_{\varepsilon}(0,T)}^2
    t^{-(1-\frac{1}{p})-\frac{3}{4}(1-\frac{2}{p})-\frac{1+|\alphah|}{2}}
    \log t.
\end{align}
Next, we consider the estimate of $I_{2,p}(t)$, we have for the case of $p=2$ that
\begin{align}
    I_{2,2}(t)
    \leq{}&
    C
    \int_{\frac{t}{2}}^t
    (t-\tau)^{-\frac{1}{2}}
    \n{ 
    \nabla^{\alpha}
    (\vh \otimes \vh)(\tau)}_{L^2(\R^3)}d\tau\\
    \leq{}&
    C
    \int_{\frac{t}{2}}^t
    (t-\tau)^{-\frac{1}{2}}
    \n{\nabla^{\alpha}\vh(\tau)}_{L^2(\R^3)}  \n{\vh(\tau)}_{L^{\infty}(\R^3)}d\tau
    \label{Duh-I_2}    \\
    \leq{}&
    C
    \n{(v,\theta)}_{Y_{\varepsilon}(0,T)}^2
    \int_{\frac{t}{2}}^t
    (t-\tau)^{-\frac{1}{2}}(1+\tau)^{-\frac{3}{2}-\frac{|\alphah|}{2}}d\tau\\
    \leq{}&
    C
    \n{(v,\theta)}_{Y_{\varepsilon}(0,T)}^2
    t^{-\frac{1}{2}-\frac{1+|\alphah|}{2}}.
\end{align}
For the case of $p=\infty$, {we deduce with the help of Lemma \ref{lemm:para} that} 
\begin{align}
    I_{2,\infty}(t)
    \leq{}&
    C
    \sum_{j,k\in\mathbb{Z}}
    { 2^j}
    2^{\frac{1}{2}k}
    \int_{\frac{t}{2}}^t
    \n{\Deltahh_j\Deltav_k e^{(t-\tau)\Deltah}
    {   \nabla^{\alpha} } 
     \nablah 
    \cdot(\vh \otimes \vh)(\tau)
    }_{L^2(\R^3)}d\tau\\
    \leq{}&
    C
    \sum_{j,k\in\mathbb{Z}}
    { 2^{(1+|\alphah|)j}}
    2^{\frac{1}{2}k}
    \int_{\frac{t}{2}}^t
    e^{-c2^{2j}(t-\tau)}
    \n{\Deltahh_j\Deltav_k 
    {   
    \p_{x_3}^{\alpha_3}
    {\nablah \cdot(\vh \otimes \vh)(\tau)}}}_{L^2(\R^3)}d\tau\\
    \leq{}&
    \begin{dcases}
        C t^{  { \frac{1}{2} } }
        \sup_{\frac{t}{2}\leq \tau \leq t}
        \sup_{j \in \mathbb{Z}}
        \sum_{k \in \mathbb{Z}}
        2^{\frac{1}{2}k}
        \n{\Deltahh_j\Deltav_k 
        {   
        \p_{x_3}^{\alpha_3}
        {\nablah \cdot(\vh \otimes \vh)(\tau)}}}_{L^2(\R^3)} 
        & (|\alphah| = 0)\\
        C
        \sup_{\frac{t}{2}\leq \tau \leq t}
        \sum_{j,k\in \mathbb{Z}}
        2^{\frac{1}{2}k}
        \n{\Deltahh_j\Deltav_k 
        {   
        {\nablah \cdot(\vh \otimes \vh)(\tau)}}}_{L^2(\R^3)} 
        & (|\alphah| = 1)
    \end{dcases}
    \\
    \leq{}&
    \begin{dcases}
        C t^{  { \frac{1}{2} } }
        \sup_{\frac{t}{2}\leq \tau \leq t}
        \sum_{m=0}^1
        \n{{   
        \p_{x_3}^{\alpha_3+m}
        {\nablah \cdot(\vh \otimes \vh)(\tau)}}}_{L^2(\R^3)}
        & (|\alphah| = 0)\\
        C
        \sup_{\frac{t}{2}\leq \tau \leq t}
        \n{{(\vh \otimes \vh)(\tau)}}_{\dot{\mathcal{B}}_{2,1}^{1,\frac{1}{2}} (\R^3)}  
        & (|\alphah| = 1)
    \end{dcases}
    \\
    \leq{}&
    \begin{dcases}
        \begin{aligned}
        C 
        t^{  { \frac{1}{2} } }
        \sup_{\frac{t}{2}\leq \tau \leq t}
        \sum_{m=0}^{\alpha_3+1}
        &
        \left(\n{\nablah \p_{x_3}^{m} \vh(\tau)}_{L^2(\R^3)} \n{ \vh(\tau)}_{L^{\infty}(\R^3)}\right.
        \\
        &+
        \left.
        \n{\nablah  \vh(\tau)}_{L^{\infty}(\R^3)} \n{ \p_{x_3}^{m}\vh(\tau)}_{L^2(\R^3)}
        \right)
        \end{aligned}
        & (|\alphah| = 0)\\
        C
        \sp{\sup_{\frac{t}{2}\leq \tau \leq t}\n{{\vh(\tau)}}_{\dot{\mathcal{B}}_{2,1}^{1,\frac{1}{2}}(\R^3)  } }^2 
        & (|\alphah| = 1) 
    \end{dcases}
    \\
    \leq{}&
    C\n{(v,\theta)}^{2}_{Y_{\varepsilon}(0,T)}t^{-1-\frac{1+|\alphah|}{2}}.
\end{align}
Hence, we have
\begin{align}
    I_{2,p}(t)
    \leq
    I_{2,2}(t)^{\frac{2}{p}}
    I_{2,\infty}(t)^{1-\frac{2}{p}}
    \leq
    C\n{(v,\theta)}_{Y_{\varepsilon}(0,T)}^2t^{-(1-\frac{1}{p})-\frac{1+|\alphah|}{2}}.
\end{align}

\noindent
{\it Step 2. Estimates of $\{\mathcal{D}_{\pm,j}^{\rm vel,h}[v,\theta]\}_{j=2,6}$, $\{\mathcal{D}_{\pm,j}^{\rm temp}[v,\theta]\}_{j=2,6}$.} 

We only focus on the estimates of $\mathcal{D}_{\pm,2}^{\rm vel,h}[v,\theta]$ as the others are treated similarly.
It follows that  
\begin{align}
    &
    \n{  \nabla^{\alpha}  
    \mathcal{D}_{\pm,2}^{\rm vel,h}[v,\theta](t)}_{L^p(\R^3)}  \\ 
    & \quad 
    \leq
    C
    \int_0^{\frac{t}{2}}
    (t-\tau)^{-(1-\frac{1}{p})-\frac{3}{4}(1-\frac{2}{p})-\frac{|\alphah|}{2}}
    \n{ \p_{x_3} (v_3 \vh)(\tau) }
    _{L^1_{\xh} W^{3+\alpha_3,1}_{x_3}(\R^3) } 
    d\tau \\  
    &
    \qquad  
    +
    C
    \n{
    \int_{\frac{t}{2}}^t
    e^{(t-\tau)\Deltah}
    e^{\pm i (t-\tau)\frac{|\nablah|}{|\nabla|}}
   \Grad^{\alpha}  
    \frac{\nablah}{|\nablah|}
    \frac{\partial_{x_3}^2}{|\nabla|^2}
    \sp{\frac{\nablah}{|\nablah|} \cdot \p_{x_3} (v_3 \vh) }  (\tau) 
    d\tau
    }_{L^p(\R^3)}\\
    & \quad
    =: J_{1,p}(t) + J_{2,p}(t).
\end{align}
For the estimate of $J_{1,p}(t)$, it holds from Corollary \ref{cor:disp} that
\begin{align}
    J_{1,p}(t)
    \leq{}&
    C
    t^{-(1-\frac{1}{p})-\frac{3}{4}(1-\frac{2}{p})-\frac{|\alphah|}{2}}
    \int_0^{\frac{t}{2}} 
    \sum_{k=1}^{4+\alpha_3} \sum_{\ell=0}^{k}
    \n{\partial_{x_3}^{k-\ell}v_3(\tau)}_{L^2(\R^3)}
    \n{\partial_{x_3}^{\ell}\vh(\tau)}_{L^2(\R^3)} 
    d\tau \\
    \leq{}&
       C
    \n{(v,\theta)}_{Y_{\varepsilon}(0,T)}^2
    t^{-(1-\frac{1}{p})-\frac{3}{4}(1-\frac{2}{p})-\frac{|\alphah|}{2}}
    \int_0^{\frac{t}{2}}
    (1+\tau)^{-\f{13}{12}  }d\tau\\
    \leq{}&
    C
    \n{(v,\theta)}_{Y_{\varepsilon}(0,T)}^2
    t^{-(1-\frac{1}{p})-\frac{3}{4}(1-\frac{2}{p})-\frac{|\alphah|}{2}} .
\end{align}
Next, we consider the estimate of $J_{2,p}(t)$. In case of $p=2$, we see
\begin{align}
    J_{2,2}(t) 
    \leq{}&
    C
    \int_{\frac{t}{2}}^t
    (t-\tau)^{-\frac{|\alphah|}{2}}
    \n{ 
    \p_{x_3}^{\alpha_3+1}
    (v_3 \vh)(\tau)}_{L^2(\R^3)}d\tau\\ 
    \leq{}&
    C
    \n{(v,\theta)}_{Y_{\varepsilon}(0,T)}^2
    \int_{\frac{t}{2}}^t 
    (t-\tau)^{-\frac{|\alphah|}{2}}(1+\tau)^{-\frac{7}{4}}d\tau\\
    \leq{}&
    C
    \n{(v,\theta)}_{Y_{\varepsilon}(0,T)}^2
    t^{-\frac{3}{4}-\frac{|\alphah|}{2}}.  
\end{align}
In case of $p=\infty$, we see that
\begin{align}
    J_{2,\infty}(t) 
    \leq{}&
    C
    \sum_{j,k\in\mathbb{Z}}
    2^j
    2^{\frac{1}{2}k}
    \int_{\frac{t}{2}}^t
    e^{-c2^{2j}(t-\tau)}
    \n{\Deltahh_j\Deltav_k 
    \p_{x_3}^{\alpha_3+1}
    \nablah^{\alphah}  
    (v_3 \vh)(\tau)
    }_{L^2(\R^3)}  d\tau  \\
    \leq{}&
    C
    \int_{\frac{t}{2}}^t
    (t-\tau)^{-\f{1}{2}}
    \sum_{k \in \Z} 
    2^{\frac{1}{2}k} 
    \sup_{j\in \Z}
    \n{\Deltahh_j\Deltav_k 
    \p_{x_3}^{\alpha_3+1}
    \nablah^{\alphah}  
    (v_3 \vh)(\tau)
    }_{L^2(\R^3)}   d\tau  \\
    \leq{}&
    C
    \int_{\frac{t}{2}}^t
    (t-\tau)^{-\f{1}{2}}  
    \sup_{j,k\in \Z} 
    \n{\Deltahh_j\Deltav_k 
    \p_{x_3}^{\alpha_3+1}
    \nablah^{\alphah}  
    (v_3 \vh)(\tau)
    }_{L^2(\R^3)} ^{\f{1}{2}}  \\
    & \qquad 
    \times  
    \sup_{j,k\in \Z} 
    \n{\Deltahh_j\Deltav_k 
    \p_{x_3}^{\alpha_3+2}
    \nablah^{\alphah}  
    (v_3 \vh)(\tau)
    }_{L^2(\R^3)} ^{\f{1}{2}} d\tau   \\
    \leq{}&
    C\n{(v,\theta)}_{Y_{\varepsilon}(0,T)}^2
    \int_{\frac{t}{2}}^t
    (t-\tau)^{-\f{1}{2}}  
    (1+\tau)^{ -\f{7}{4}-\f{|\alphah|}{2}    } 
    d\tau \\
    \leq{}&
    C
    \n{(v,\theta)}_{Y_{\varepsilon}(0,T)}^2
    t^{-\frac{5}{4}-\frac{|\alphah|}{2}}.  
\end{align}
Interpolating between the above two estimates gives
\begin{align}
    J_{2,p}(t)
    \leq
    J_{2,2}(t)^{\frac{2}{p}}
    J_{2,\infty}(t)^{1-\frac{2}{p}}
    \leq
    C\n{(v,\theta)}_{Y_{\varepsilon}(0,T)}^2
    t^{-(1-\frac{1}{p})-\frac{|\alphah|}{2} -\f{1}{4}  }
\end{align}
for all $2\leq p \leq \infty$. {Here, we remark that the decay estimate of $J_{2,p}(t)$ should be responsible for the constraint of $A_0(p)$ defined in Theorem \ref{main-thm}. }

\noindent
{\it Step 3. Estimates of $\{\mathcal{D}_{\pm,j}^{\rm vel,h}[v,\theta]\}_{j=3,4}$, $\{\mathcal{D}_{\pm,j}^{\rm temp}[v,\theta]\}_{j=3,4}$, and $\{\mathcal{D}_{\pm,j}^{\rm vel,v}[v,\theta]\}_{j=2,3,4,6}$.}

We only focus on the estimates of $\mathcal{D}_{\pm,3}^{\rm vel,h}[v,\theta]$ as the others are treated similarly. 
It follows that 
\begin{align}
    & 
    \n{  \nabla^{\alpha}  
    \mathcal{D}_{\pm,3}^{\rm vel,h}[v,\theta](t)}_{L^p(\R^3)}  \\ 
    & \quad 
    \leq
    C
    \int_0^{\frac{t}{2}}
    (t-\tau)^{-(1-\frac{1}{p})-\frac{3}{4}(1-\frac{2}{p})-\frac{1+|\alphah|}{2}}
    \n{ (v_3 \vh)(\tau) } 
    _{L^1_{\xh} W^{3+\alpha_3,1}_{x_3}(\R^3) } 
    d\tau \\  
    &
    \qquad  
    +
    C
    \n{
    \int_{\frac{t}{2}}^t
    e^{(t-\tau)\Deltah}
    e^{\pm i (t-\tau)\frac{|\nablah|}{|\nabla|}}
   \Grad^{\alpha}  
    \frac{\nablah}{|\nablah|}
    \frac{\partial_{x_3}}{|\nabla|}
   \nablah\cdot (v_3 \vh)
      (\tau) 
    d\tau
    }_{L^p(\R^3)}\\
    &\quad 
    =: K_{1,p}(t) + K_{2,p}(t).
\end{align}
For the estimate of $K_{1,p}(t)$, it holds by Corollary \ref{cor:disp} that
\begin{align}
    K_{1,p}(t)
    \leq{}&
    C
    t^{-(1-\frac{1}{p})-\frac{3}{4}(1-\frac{2}{p})-\frac{1+|\alphah|}{2}}
    \int_0^{\frac{t}{2}} 
    \sum_{k=0}^{3+\alpha_3} \sum_{\ell=0}^{k}
    \n{\partial_{x_3}^{k-\ell}v_3(\tau)}_{L^2(\R^3)}
    \n{\partial_{x_3}^{\ell}\vh(\tau)}_{L^2(\R^3)} 
    d\tau \\
    \leq{}&
       C
    \n{(v,\theta)}_{Y_{\varepsilon}(0,T)}^2
    t^{-(1-\frac{1}{p})-\frac{3}{4}(1-\frac{2}{p})-\frac{1+|\alphah|}{2}}
    \int_0^{\frac{t}{2}}
    (1+\tau)^{-\f{13}{12}  }d\tau\\
    \leq{}&
    C
    \n{(v,\theta)}_{Y_{\varepsilon}(0,T)}^2
    t^{-(1-\frac{1}{p})-\frac{3}{4}(1-\frac{2}{p})-\frac{1+|\alphah|}{2}} .
\end{align}
Next, we consider the estimate of $K_{2,p}(t)$. In case of $p=2$, we see 
\begin{align}
    K_{2,2}(t) 
    \leq{}&
    C
    \int_{\frac{t}{2}}^t
    (t-\tau)^{-\frac{|\alphah|}{2}}
    \n{ 
    \Grad^{\alpha}
    (v_3 \vh)(\tau)}_{L^2(\R^3)}d\tau\\ 
    \leq{}&
    C
    \n{(v,\theta)}_{Y_{\varepsilon}(0,T)}^2
    \int_{\frac{t}{2}}^t 
    (t-\tau)^{-\frac{|\alphah|}{2}}
    (1+\tau)^{-\frac{7}{4} -\f{|\alphah|}{2}  }   d\tau  \\
    \leq{}&
    C
    \n{(v,\theta)}_{Y_{\varepsilon}(0,T)}^2
    t^{-\frac{3}{4}-\frac{1+|\alphah|}{2}}.   
\end{align}
In case of $p=\infty$, we invoke Lemma \ref{lemm:para} again and calculate as 
\begin{align}
    K_{2,\infty}(t)
    \leq{}&
    C
    \sum_{j,k\in\mathbb{Z}}
    { 2^j}
    2^{\frac{1}{2}k}
    \int_{\frac{t}{2}}^t
    \n{\Deltahh_j\Deltav_k e^{(t-\tau)\Deltah}
    {   \nabla^{\alpha} } 
     \nablah 
    \cdot(v_3 \vh)(\tau)   
    }_{L^2(\R^3)}d\tau\\
    \leq{}&
    C
    \sum_{j,k\in\mathbb{Z}}
    { 2^{(1+|\alphah|)j}}
    2^{\frac{1}{2}k}
    \int_{\frac{t}{2}}^t
    e^{-c2^{2j}(t-\tau)}
    \n{\Deltahh_j\Deltav_k 
    {   
    \p_{x_3}^{\alpha_3}
    {\nablah \cdot(v_3 \vh)(\tau)}}}_{L^2(\R^3)}d\tau\\
    \leq{}& 
    \begin{dcases}
        C t^{  { \frac{1}{2} } }
        \sup_{\frac{t}{2}\leq \tau \leq t}
        \sup_{j \in \mathbb{Z}}
        \sum_{k \in \mathbb{Z}}
        2^{\frac{1}{2}k}
        \n{\Deltahh_j\Deltav_k 
        {   
        \p_{x_3}^{\alpha_3}
        {\nablah \cdot(v_3 \vh)(\tau)}}}_{L^2(\R^3)} 
        & (|\alphah| = 0)\\
        C
        \sup_{\frac{t}{2}\leq \tau \leq t}
        \sum_{j,k\in \mathbb{Z}}
        2^{\frac{1}{2}k}
        \n{\Deltahh_j\Deltav_k 
        {   
        {\nablah \cdot(v_3 \vh)(\tau)}}}_{L^2(\R^3)} 
        & (|\alphah| = 1)
    \end{dcases}
    \\
    \leq{}&
    \begin{dcases}
        C t^{  { \frac{1}{2} } }
        \sup_{\frac{t}{2}\leq \tau \leq t}
        \sum_{m=0}^1
        \n{{   
        \p_{x_3}^{\alpha_3+m}
        {\nablah \cdot(v_3 \vh)(\tau)}}}_{L^2(\R^3)}
        & (|\alphah| = 0)\\
        C
        \sup_{\frac{t}{2}\leq \tau \leq t}
        \n{{(v_3 \vh) (\tau)}}_{\dot{\mathcal{B}}_{2,1}^{1,\frac{1}{2}} (\R^3)}  
        & (|\alphah| = 1)
    \end{dcases}
    \\
    \leq{}&
    \begin{dcases}
        \begin{aligned}
        C 
        t^{  { \frac{1}{2} } }
        \sup_{\frac{t}{2}\leq \tau \leq t}
        \sum_{m=0}^{\alpha_3+1}
        &
        \left(\n{\nablah \p_{x_3}^{m} v_3(\tau)}_{L^2(\R^3)} \n{ \vh(\tau)}_{L^{\infty}(\R^3)}\right. 
        \\
        &+
        \left.
        \n{\nablah  v_3(\tau)}_{L^{\infty}(\R^3)} \n{ \p_{x_3}^{m}\vh(\tau)}_{L^2(\R^3)}
        \right)
        \end{aligned}
        & (|\alphah| = 0)\\
        C
        \sp{\sup_{\frac{t}{2}\leq \tau \leq t}\n{{v_3(\tau)}}_{\dot{\mathcal{B}}_{2,1}^{1,\frac{1}{2}} (\R^3)} } 
         \sp{\sup_{\frac{t}{2}\leq \tau \leq t}\n{{\vh(\tau)}}_{\dot{\mathcal{B}}_{2,1}^{1,\frac{1}{2}} (\R^3)} }
        & (|\alphah| = 1) 
    \end{dcases} \\
    \leq{}&
    \begin{dcases}
         C\n{(v,\theta)}^{2}_{Y_{\varepsilon}(0,T)}
         t^{-\f{7}{4}}
        & (|\alphah| = 0)\\
        C\n{(v,\theta)}^{2}_{Y_{\varepsilon}(0,T)}
         t^{-\f{9}{4}}  
        & (|\alphah| = 1)
    \end{dcases}
    \\
    ={}&
    C\n{(v,\theta)}^{2}_{Y_{\varepsilon}(0,T)}t^{-\f{5}{4}-\frac{1+|\alphah|}{2}}. 
\end{align}
Application of interpolation inequality yields 
\begin{align}
    K_{2,p}(t)
    \leq
    K_{2,2}(t)^{\frac{2}{p}}
    K_{2,\infty}(t)^{1-\frac{2}{p}}
    \leq
    C\n{(v,\theta)}_{Y_{\varepsilon}(0,T)}^2
    t^{-(1-\frac{1}{p})-\frac{1+|\alphah|}{2} -\f{1}{4}  }
\end{align}
for all $2\leq p \leq \infty$. 
\end{proof}

\subsection{Proof of Theorem \ref{main-thm}}
Let $(v,\theta)$ be the global solution to \eqref{eq:B2} for given initial datum $(v_0,\theta_0) \in X^{8,4}(\R^3)$ satisfying the smallness condition on $H^8(\R^3)$ that is required in Lemma \ref{lemm:GWP-Sob} with $m=8$.
In this subsection, we complete the proof of Theorem \ref{main-thm} by the bootstrapping argument.
\begin{proof}[Proof of Theorem \ref{main-thm}]
It follows from Sobolev embeddings and Lemma \ref{lemm:GWP-Sob} that 
\begin{align}\label{Y01}
    \n{(v,\theta)}_{Y_{\varepsilon}(0,2)}
    \leq 
    C
    \sup_{0 \leq t \leq 2}
    \n{(v,\theta)(t)}_{H^8(\R^3)}
    \leq 
    C_1
    \n{(v_0,\theta_0)}_{X^{8,4}(\R^3)}
\end{align}
for some positive constant $C_1$.
It follows from Theorem \ref{thm:lin} that there exists a positive constant $C_{\ep,2}$ such that 
\begin{align}\label{Y-lin}
  \n{ 
  (v^{\rm lin},  \theta^{\rm lin} )
  }_{ Y_{\ep}(0,\infty) }
  \leq 
  C_{\ep,2}
  \n{
  (v_0,\theta_0)}_{X^{8,4}(\R^3)}. 
\end{align}
Let $T^*$ be the suprimum of $t > 0$ satisfying 
\begin{align}\label{pro:hyp}
    \n{ (v,\theta) }_{ Y_{\varepsilon}(0,t) }
    \leq 
    2 ( C_1 + C_{\ep,2} )
    \n{(v_0,\theta_0)}_{X^{8,4}(\R^3)}. 
\end{align}
Suppose by contradiction that $T^* < \infty$.
From the definition of $C_1$, we see that $T^*>2$.
Let $2<T<T^*$.
Then, we see by all lemmas in the previous subsection, \eqref{Y01}, and \eqref{Y-lin} that 
\begin{align}
    \n{ (v,\theta) }_{ Y_{\varepsilon}(0,T) }
    \leq{}&
    \n{ (v,\theta) }_{ Y_{\varepsilon}(0,2) }
    +
    \n{ (v,\theta) }_{ Y_{\varepsilon}(2,T) }\\
    \leq{}&
    \n{ (v,\theta) }_{ Y_{\varepsilon}(0,2) }
    +
    \n{ (v^{\rm lin},\theta^{\rm lin}) }_{ Y_{\varepsilon}(0,\infty) }\\
    &
    +
    \sum_{j=1}^6
    \n{
    \sp{
    \mathcal{D}_{\pm,j}^{\rm vel,h}[v,\theta],
    \mathcal{D}_{\pm,j}^{\rm vel,v}[v,\theta],
    \mathcal{D}_{\pm,j}^{\rm temp}[v,\theta]
    }
    }_{Y_{\varepsilon}(2,T)}\\
    \leq{}&
    ( C_1 + C_{\ep,2} )
    \n{(v_0,\theta_0)}_{X^{8,4}(\R^3)}
    +
    C_2
    \n{ (v,\theta) }_{ Y_{\varepsilon}(0,T) }^2\\
    \leq{}&
    ( C_1 + C_{\ep,2} )
    \n{(v_0,\theta_0)}_{X^{8,4}(\R^3)}
    +
    4
    C_2( C_1 + C_{\ep,2} )^2\n{(v_0,\theta_0)}_{X^{8,4}(\R^3)}^2
\end{align}
for some positive constant $C_2$. 
Choosing the initial datum so small that 
\begin{align}
    \n{(v_0,\theta_0)}_{X^{8,4}(\R^3)}
    \leq 
    \frac{1}{8C_2( C_1 + C_{\ep,2} )},
\end{align}
we have 
\begin{align}
    \n{ (v,\theta) }_{ Y_{\varepsilon}(0,T) }
    \leq 
    \frac{3}{2} 
    ( C_1 + C_{\ep,2} )
    \n{(v_0,\theta_0)}_{X^{8,4}(\R^3)},
\end{align}
which leads a contradiction to the definition of $T^*$.
Thus, we have $T^*=\infty$ and obtain 
\begin{align}
    \n{ (v,\theta) }_{ Y_{\varepsilon}(0,\infty) }
    \leq 
    2 ( C_1 + C_{\ep,2} )
    \n{(v_0,\theta_0)}_{X^{8,4}(\R^3)}.
\end{align}
From the definition of $Y_{\varepsilon}(0,\infty)$-norm, we obtain all desired decay estimates and complete the proof.
\end{proof}

\vspace{10mm}
{\bf{Acknowledgements}} 
M. Fujii was supported by Grant-in-Aid for Research Activity Start-up, Grant Number JP23K19011.
Y. Li was supported by Natural Science Foundation of Anhui Province under grant number 2408085MA018, Natural Science Research Project in Universities of Anhui Province under grant number 2024AH050055, National Natural Science Foundation of China under grant number 12001003; he sincerely thanks Professor Yongzhong Sun for patient guidance and encouragement.

\vspace{4mm}
{\bf{Data Availability}} Data sharing is not applicable to this article as no datasets were generated or analyzed
during the current study.
\vspace{4mm}

{\bf{Conflicts of interest}} All authors certify that there are no conflicts of interest for this work.

\end{document}